\theoremstyle{plain}
\newtheorem{theorem}{Theorem}[section]
\newtheorem{lemma}[theorem]{Lemma}
\newtheorem{corollary}[theorem]{Corollary}
\theoremstyle{definition}
\newtheorem{definition}[theorem]{Definition}
\theoremstyle{remark}
\newtheorem*{remark*}{Remark}
\numberwithin{equation}{section}
\newcommand\D{{\mathcal D}}
\newcommand\CC{{\mathbb C}}
\newcommand\RR{{\mathbb R}}
\newcommand\NN{{\mathbb N}}
\newcommand\PP{{\mathbb P}}
\newcommand\Aa{{\mathbb A}}
\newcommand\X{{\Theta}}
\newcommand\x{{\theta}}
\newcommand\pp{\mbox{$\mathfrak{p}_{F}$}}
\newcommand\F{{\mathrm{U}}}
\newcommand\Sh{\mbox{\Large $\mathfrak {s}$}}
   \title{Exceptional Charlier and Hermite orthogonal polynomials
  \footnote{Partially supported by MTM2012-36732-C03-03 (Ministerio de Economía y Competitividad),
FQM-262, FQM-4643, FQM-7276 (Junta de Andalucía) and Feder Funds (European
Union).}}
   \author{Antonio J. Dur\'{a}n\\
     \footnotesize
        \  Departamento de An\'{a}lisis Matem\'{a}tico.
       Universidad de Sevilla \\
       \footnotesize Apdo (P. O. BOX) 1160. 41080 Sevilla. Spain.
   duran@us.es \\
          \ \ }
   \date{}
\begin{document}
   \maketitle

\bigskip

\begin{abstract}
Using Casorati determinants of Charlier polynomials $(c_n^a)_n$, we construct for each finite set $F$ of positive integers a sequence of polynomials
$c_n^{F}$, $n\in \sigma _F$, which are eigenfunctions of a second order difference operator, where $\sigma _F$ is certain infinite set of nonnegative integers, $\sigma _F \varsubsetneq \NN$.  For suitable finite sets $F$ (we call them admissible sets), we prove that the polynomials $c_n^{F}$, $n\in \sigma _F$, are actually exceptional Charlier polynomials; that is, in addition, they are orthogonal and complete with respect to a positive measure. By passing to the limit, we transform the Casorati determinant of Charlier polynomials into a Wronskian determinant of Hermite polynomials. For admissible sets, these Wronskian determinants turn out to be exceptional Hermite polynomials.
\end{abstract}

\section{Introduction}
Exceptional orthogonal polynomials $p_n$, $n\in X\varsubsetneq \NN$, are complete orthogonal polynomial systems with respect to a positive measure which in addition
are eigenfunctions of a second order differential operator. They extend the  classical families of Hermite, Laguerre and Jacobi. The last few years have seen a great deal of activity in the area  of exceptional orthogonal polynomials (see, for instance,
\cite{DEK}, \cite{GUKM1}, \cite{GUKM2} (where the adjective \textrm{exceptional} for this topic was introduced), \cite{GUKM3}, \cite{GUKM4}, \cite{GUGM}, \cite{MR}, \cite{OS0}, \cite{OS}, \cite{Qu}, \cite{STZ}, \cite{Ta} and the references therein).

The most apparent difference between classical orthogonal polynomials and exceptional orthogonal polynomials
is that the exceptional families have gaps in their degrees, in the
sense that not all degrees are present in the sequence of polynomials (as it happens with the classical families) although they form a complete orthonormal set of the underlying $L^2$ space defined by the orthogonalizing positive measure. This
means in particular that they are not covered by the hypotheses of Bochner's classification theorem \cite{B}.
Exceptional orthogonal polynomials have been applied to shape-invariant potentials \cite{Qu},
supersymmetric transformations \cite{GUKM3}, to discrete quantum mechanics \cite{OS}, mass-dependent potentials \cite{MR}, and to quasi-exact solvability \cite{Ta}.

In the same way, exceptional discrete orthogonal polynomials are complete orthogonal polynomial systems with respect to a positive measure which in addition are eigenfunction of a second order difference operator, extending the discrete classical families of Charlier, Meixner, Krawtchouk and Hahn.
As far as the author knows the only known example of what  can be called exceptional Charlier polynomials appeared in \cite{YZ} (1999). If orthogonal discrete polynomials on nonuniform lattices and orthogonal $q$-polynomials are considered, then one should add \cite{OS,OS2,OS4,OS5} where exceptional Wilson, Racah, Askey-Wilson and $q$-Racah polynomials are considered.

The purpose of this paper (and the forthcoming ones) is to introduce a systematic way of constructing exceptional discrete orthogonal polynomials using the concept of dual families of polynomials (see \cite{Leo}). One can then also construct examples of exceptional orthogonal polynomials by taking limits in some of the parameters in the same way as one goes from classical discrete polynomials to classical polynomials in the Askey tableau.

\begin{definition}\label{dfp}
Given two sets of nonnegative integers $U,V\subset \NN$, we say that the two sequences of polynomials
$(p_u)_{u\in U}$, $(q_v)_{v\in V}$ are dual if there exist a couple of sequences of numbers $(\xi_u)_{u\in U}, (\zeta_v)_{v\in V} $ such that
\begin{equation}\label{defdp}
\xi_up_u(v)=\zeta_vq_v(u), \quad u\in U, v\in V.
\end{equation}
\end{definition}

Duality has shown to be a fruitful concept regarding discrete orthogonal polynomials, and its utility will be again manifest in the exceptional discrete polynomials world. Indeed, it turns out that duality interchanges exceptional discrete orthogonal polynomials with the so-called Krall discrete orthogonal polynomials. A Krall discrete orthogonal family is a sequence of polynomials $(p_n)_{n\in \NN}$, $p_n$ of degree $n$, orthogonal with respect to a positive measure which, in addition, are also eigenfunctions of a higher order difference operator. A huge amount of families of Krall discrete orthogonal polynomials have been recently introduced by the author by mean of certain Christoffel transform of the classical discrete measures of Charlier, Meixner, Krawtchouk and Hahn (see \cite{du0}, \cite{du1}, \cite{DdI}). A Christoffel transform is a transformation which consists in multiplying a measure $\mu$ by a polynomial $r$. It has a long tradition in the context of orthogonal polynomials: it goes back a century and a half ago when
E.B. Christoffel (see \cite{Chr} and also \cite{Sz}) studied it for the particular case $r(x)=x$.

In this paper we will concentrate on exceptional Charlier and Hermite polynomials (Meixner, Krawtchouk, Hahn, Laguerre and Jacobi families will be considered in forthcoming papers).

The content of this paper is as follows. In Section 2, we include some preliminary results about symmetric operators, Christoffel transforms and finite sets of positive integers.

In Section 3, using Casorati determinants of Charlier polynomials we associate to each finite set $F$ of positive integers a sequence of polynomials which are eigenfunctions of a second order difference operator. Indeed, write the finite set $F$ of positive integers as $F=\{f_1,\cdots ,f_k\}$, $f_i<f_{i+1}$ ($k$ is then the number of elements of $F$ and $f_k$ the maximum element of $F$). We define the nonnegative integer $u_F$  by $u_F=\sum_{f\in F}f-\binom{k+1}{2}$ and the infinite set of nonnegative integers $\sigma _F$ by
$$
\sigma _F=\{u_F,u_F+1,u_F+2,\cdots \}\setminus \{u_F+f,f\in F\}.
$$
Given $a\in \RR \setminus \{0\}$, we then associate to $F$ the sequence of polynomials $c_n^{a;F}$, $n\in \sigma _F$, defined by
\begin{equation}\label{defchexi}
c_n^{a;F}(x)=\begin{vmatrix}c_{n-u_F}^a(x)&c_{n-u_F}^a(x+1)&\cdots &c_{n-u_F}^a(x+k)\\
c_{f_1}^a(x)&c_{f_1}^a(x+1)&\cdots &c_{f_1}^a(x+k)\\
\vdots&\vdots&\ddots &\vdots\\
c_{f_k}^a(x)&c_{f_k}^a(x+1)&\cdots &c_{f_k}^a(x+k) \end{vmatrix} ,
\end{equation}
where $(c_n^a)_n$ are the Charlier polynomials (see (\ref{Chpol})) orthogonal with respect to the discrete measure
$$
\rho_a =\sum_{x=0}^\infty \frac{a^x}{x!}\delta _x.
$$
Consider now the measure
\begin{equation}\label{ctmc}
\rho _{a}^{F}=(x-f_1)\cdots (x-f_k)\rho _a.
\end{equation}
Orthogonal polynomials with respect to $\rho _a^F$ are eigenfunctions of higher order difference operators (see \cite{du0} and \cite{DdI}).
It turns out that the sequence of polynomials $c_n^{a;F}$, $n\in \sigma _F$, and the sequence of orthogonal polynomials $(q_n^F)_n$ with respect to the measure $\rho _{a}^{F}$ are dual sequences (see Lemma \ref{lem3.2}). As a consequence we get that the polynomials $c_n^{a;F}$, $n\in \sigma _F$, are always eigenfunctions of a second order difference operator $D_F$ (whose coefficients are rational functions); see Theorem \ref{th3.3}. Charlier-type orthogonal polynomials considered in \cite{YZ} corresponds with the case $F=\{1,2\}$ (duality is also used in \cite{YZ}).

In Section 4, we study the most interesting case: it appears when the measure $\rho _{a}^{F}$ (\ref{ctmc}) is positive. This gives rise to the concept of admissible sets of positive integers. Split up the set $F$, $F=\bigcup _{i=1}^KY_i$, in such a way that $Y_i\cap Y_j=\emptyset $, $i\not =j$, the elements of each $Y_i$ are consecutive integers and $1+\max Y_i<\min Y_{i+1}$, $i=1,\cdots, K-1$; we then say that $F$ is admissible if each $Y_i$, $i=1,\cdots, K$,  has an even number of elements. It is straightforward to see that $F$ is admissible if and only if $\prod_{f\in F}(x-f)\ge 0$, $x\in \NN$, or in other words, (if $a>0$) the measure $\rho_a^F$ (\ref{ctmc}) is positive.
This concept of admissibility has appeared several times in the literature. Relevant to this paper because of the relationship with exceptional polynomials are \cite{Kr} and \cite{Ad} where the concept appears in connection with the zeros of certain Wronskian determinant associated with eigenfunctions of second order differential operators of the form $-d^2/dx ^2 +U$. Admissibility was also considered in \cite{KS} and \cite{YZ}.

We  prove (Theorems \ref{th4.4} and \ref{th4.5}) that if $F$ is an admissible set and $a>0$, then the polynomials $c_n^{a;F}$, $n\in \sigma _F$, are orthogonal and complete with respect to the positive measure
$$
\omega_{a;F} =\sum_{x=0}^\infty \frac{a^x}{x!\Omega ^a_F(x)\Omega ^a_F(x+1)}\delta _x,
$$
where $\Omega _F^a$ is the polynomial defined by
\begin{equation}\label{defchexii}
\Omega _F ^a(x)=\begin{vmatrix}c_{f_1}^a(x)&c_{f_1}^a(x+1)&\cdots &c_{f_1}^a(x+k-1)\\
\vdots&\vdots&\ddots &\vdots\\
c_{f_k}^a(x)&c_{f_k}^a(x+1)&\cdots &c_{f_k}^a(x+k-1) \end{vmatrix} .
\end{equation}
In particular we characterize admissible sets $F$ as those for which the Casorati determinant $\Omega ^a_F(x)$ has constant sign for $x\in \NN$ (Lemma \ref{l3.1}).

Casorati determinants like (\ref{defchexii}) for Charlier and other discrete orthogonal polynomials were considered by Karlin and Szeg\H o in \cite{KS} (see also \cite{KMc}). In particular, Karlin and Szeg\H o proved that when $F$ is admissible then $\Omega _F^a(x)$ $(a>0)$ has constant sign for $x\in \NN$.

Although it is out of the scope of this paper, we point out here that the duality transforms the higher order difference operator with respect to which the polynomials $(q_n^F)_n$ are eigenfunctions in a higher order recurrence relation for the polynomials $c_n^{a;F}$. This higher order recurrence relation has the form
$$
h(x)c_n^{a;F}(x)=\sum_{j=-u_F-k-1}^{u_F+k+1}u^{a;F}_{n,j}c_{n+j}^{a;F}(x)
$$
where $h$ is a polynomial in $x$ of degree $u_F+k+1$ satisfying $h(x)-h(x-1)=\Omega _F^a(x)$, and $a_{n,j}$, $j=-u_F-k-1,\ldots ,u_F+k+1$, are rational functions in $n$ depending on $a$ and $F$ but not on $x$.

In Section 5 and 6, we construct exceptional Hermite polynomials by taking limit (in a suitable way) in the exceptional Charlier polynomials when $a\to +\infty $. We then get (see Theorem \ref{th5.1}) that for each finite set $F$ of positive integers, the polynomials
\begin{equation}\label{defhexi}
H_n^F(x)=\begin{vmatrix}H_{n-u_F}(x)&H_{n-u_F}'(x)&\cdots &H_{n-u_F}^{(k)}(x)\\
H_{f_1}(x)&H_{f_1}'(x)&\cdots &H_{f_1}^{(k)}(x)\\
\vdots&\vdots&\ddots &\vdots\\
H_{f_k}(x)&H_{f_k}'(x)&\cdots &H_{f_k}^{(k)}(x) \end{vmatrix} ,
\end{equation}
$n\in \sigma _F$, are eigenfunctions of a second order differential operator.

When $F$ is admissible,  the Wronskian determinant $\Omega _F$  defined by
\begin{equation}\label{defhexii}
\Omega _F(x)=\begin{vmatrix}
H_{f_1}(x)&H_{f_1}'(x)&\cdots &H_{f_1}^{(k-1)}(x)\\
\vdots&\vdots&\ddots &\vdots\\
H_{f_k}(x)&H_{f_k}'(x)&\cdots &H_{f_k}^{(k-1)}(x) \end{vmatrix}
\end{equation}
does not vanish in $\RR$.  For admissible sets $F$, we then prove that the polynomials $H_n^F$, $n\in \sigma _F$, are orthogonal with respect to the positive weight
$$
\omega_{F}(x) =\frac{e^{-x^2}}{\Omega ^2_F(x)},\quad x\in \RR .
$$
Moreover, they form a complete orthogonal system in $L^2(\omega _{F})$ (see Theorem \ref{th6.3}). The exceptional Hermite family introduced
in \cite{DR} corresponds with $F=\{1,2,\cdots , 2k\}$ (for the case $k=1$ see also \cite{DEK} and \cite{CPRS}). Simultaneously with this paper, exceptional Hermite polynomials as Wronskian determinant of Hermite polynomials have been introduced and studied (using a different approach) in \cite{GUGM}.

We guess that the non vanishing property of the Wronskian determinant (\ref{defhexii})  in $\RR$ is actually true for orthogonal polynomials with respect to a positive measure. Moreover, we conjecture that this property characterizes admissible sets:

\noindent
\textsl{Conjecture.} Let $F=\{f_1,\cdots, f_k\}$ and $\mu$ be a finite set of positive integers and a positive measure with finite moments and infinitely many points in its support, respectively. Consider the monic sequence $(p_n)_n$ of orthogonal polynomials with respect to $\mu$ and write $\Omega _F^{\mu}$ for the Wronskian determinant defined by $\Omega_{F}^{\mu} (x)=|p_{f_i}^{(j-1)}(x)|_{i,j=1}^k$. Then the following conditions are equivalent.
\begin{enumerate}
\item $F$ is admissible.
\item For all positive measure $\mu$ as above the Wronskian determinant $\Omega _{F}^{\mu}(x)$ does not vanish in $\RR$.
\end{enumerate}

Wronskian determinants like (\ref{defhexii}) for orthogonal polynomials were considered by Karlin and Szeg\H o in \cite{KS} for the particular case of finite sets $F$ formed by consecutive positive integers. In particular, Karlin and Szeg\H o proved the implication (1) $\Rightarrow $ (2) when $F$ is formed by an even number of consecutive positive integers.

\bigskip

When $F$ is an admissible set and $a>0$, exceptional Charlier and Hermite polynomials $c_n^{a;F}$ and $H_n^F$, $n\in \sigma _F$, can be constructed in an alternative way.
Indeed, consider the involution $I$ in the set of all finite sets of positive integers defined by
$$
I(F)=\{1,2,\cdots, f_k\}\setminus \{f_k-f,f\in F\}.
$$
The set $I(F)$ will be denoted by $G$: $G=I(F)$. We also write $G=\{g_1,\cdots , g_m\}$ with $g_i<g_{i+1}$ so that $m$ is the number of elements of $G$ and $g_m$ the maximum element of $G$. We also need the nonnegative integer $v_F$ defined by
$$
v_F=\sum_{f\in F}f+f_k-\frac{(k-1)(k+2)}{2}.
$$
For $n\ge v_F$, we then have
\begin{equation}\label{quschi2i}
c_n^{a;F}(x)=\beta_n\begin{vmatrix}
c^a_{n-v_F}(x) & \frac{x}{a}c^a_{n-v_F}(x-1) & \cdots & \frac{(x-m+1)_m}{a^m}c^a_{n-v_F}(x-m+1) \\
c^{-a}_{g_1}(-x-1) & c^{-a}_{g_1}(-x) & \cdots &
c^{-a}_{g_1}(-x+m-1) \\
               \vdots & \vdots & \ddots & \vdots \\
               c^{-a}_{g_m}(-x-1) & \displaystyle
               c^{-a}_{g_m}(-x) & \cdots &c^{-a}_{g_m}(-x+m-1)
             \end{vmatrix},
\end{equation}
\begin{equation}\label{defhexai}
H_n^F(x)=\gamma_n\begin{vmatrix}H_{n-v_F}(x)&-H_{n-v_F-1}(x)&\cdots &(-1)^mH_{n-v_F-m}(x)\\
H_{g_1}(-ix)&H_{g_1}'(-ix)&\cdots &H_{g_1}^{(m)}(-ix)\\
\vdots&\vdots&\ddots &\vdots\\
H_{g_m}(-ix)&H_{g_m}'(-ix)&\cdots &H_{g_m}^{(m)}(-ix)\end{vmatrix} ,
\end{equation}
where $\beta_n$ and $\gamma _n$, $n\ge v_F$, are certain normalization constants (see (\ref{nc1}) and (\ref{nc2})).

We have however computational evidence that shows that both identities (\ref{quschi2i}) and (\ref{defhexai}) are true for every finite set $F$ of positive integers.

Both determinantal definitions (\ref{defchexi}) and (\ref{quschi2i}) of the polynomials $c_n^{a;F}$, $n\in \sigma _F$, automatically imply a couple of factorizations of its associated second order difference operator $D_F$ in two first order difference operators. Using these factorizations, we prove that the sequence $c_n^{a;F}$, $n\in \sigma _F$, and the operator $D_F$ can be constructed in two different ways using Darboux transforms (see Definition \ref{dxt}). If we consider (\ref{defchexi}) the Darboux transform uses the sequence $c_n^{a,F_{\{ k\}}}$, $n\in \sigma _{F_{\{ k\}}}$, where $F_{\{ k\}}=\{f_1,\cdots , f_{k-1}\}$. On the other hand, if we consider (\ref{quschi2i}) the Darboux transform uses the sequence $c_n^{a;F_{\Downarrow}}$, $n\in \sigma _{F_{\Downarrow}}$, where
$$
F_{\Downarrow}=\begin{cases} \emptyset,& \mbox{if $F=\{1,2,\cdots , k\}$,}\\
\{f_{s_F}-s_F,\cdots , f_k-s_F\},& \mbox{if $F\not =\{1,2,\cdots , k\}$},
\end{cases}
$$
and for $F\not =\{1,2,\cdots , k\}$, we write $s_F=\min \{s\ge 1: s<f_s\}$. The second factorization seems to be more interesting because the operator $F\to F_{\Downarrow}$ preserves the admissibility of the set $F$. The same happens with the determinantal definitions of the exceptional Hermite polynomials $H_n^F$ (\ref{defhexi}) and (\ref{defhexai}). This fact agrees with the G\'omez-Ullate-Kamran-Milson conjecture and its corresponding discrete version (see \cite{GUKM5}): exceptional and exceptional discrete orthogonal polynomials can be obtained by applying a sequence of Darboux transforms to a classical or classical discrete orthogonal family, respectively.

We finish this Introduction by pointing out that there is a very nice invariant property of the polynomial $\Omega _F ^a$ (\ref{defchexii}) underlying the fact that the polynomials $c_n^{a;F}$, $n\in \sigma _F$, admit both determinantal definitions (\ref{defchexi}) and (\ref{quschi2i}) (see \cite{du2}, \cite{du3} and \cite{CD}): except for a sign, $\Omega_F^a$ remains invariant if we change $F$ to $G=I(F)$, $x$ to $-x$ and $a$ to $-a$; that is
$$
\Omega _F^a(x)=(-1)^{k+u_F}\Omega ^{-a}_G(-x).
$$
This invariant property gives rise to the corresponding one for the Wronskian determinant (\ref{defhexii}) (see (\ref{izah})).

\section{Preliminaries}
Let $\mu $ be a Borel measure (positive or not) on the real line. The $n$-th moment of $\mu $ is defined by
$\int _\RR t^nd\mu (t)$. When $\mu$ has finite moments for any $n\in \NN$, we can associate it a bilinear form defined in the linear space of polynomials by
\begin{equation}\label{bf}
\langle p, q\rangle =\int pqd\mu.
\end{equation}
Given an infinite set $X$ of nonnegative integers, we say that the polynomials $p_n$, $n\in X$, $p_n$ of degree $n$, are orthogonal with respect to $\mu$ if they
are orthogonal with respect to the bilinear form defined by $\mu$; that is, if they satisfy
$$
\int p_np_md\mu =0, \quad n\not = m, \quad n,m \in X.
$$
When $X=\NN$ and the degree of $p_n$ is $n$, $n\ge 0$, we get the usual definition of orthogonal polynomials with respect to a measure.
When $X=\NN$, orthogonal polynomials with respect to a measure are unique up to multiplication by non null constant. Let us remark  that this property is not true when $X\not =\NN$.
Positive measures $\mu $ with finite moments of any order and infinitely many points in its support has always a sequence of orthogonal polynomials $(p_n)_{n\in\NN }$, $p_n$ of degree $n$ (it is enough to apply the Gram-Smith orthogonalizing process to $1, x, x^2, \ldots$); in this case
the orthogonal polynomials have positive norm: $\langle p_n,p_n\rangle>0$. Moreover, given a sequence of orthogonal polynomials $(p_n)_{n\in \NN}$ with respect to a measure $\mu$ (positive or not) the bilinear form (\ref{bf}) can be represented by a positive measure if and only if $\langle p_n,p_n \rangle > 0$, $n\ge 0$.

When $X=\NN$, Favard's Theorem establishes that a sequence $(p_n)_{n\in \NN}$ of polynomials, $p_n$ of degree $n$, is orthogonal (with non null norm) with respect to a measure if and only if it satisfies
a three term recurrence relation of the form ($p_{-1}=0$)
$$
xp_n(x)=a_np_{n+1}(x)+b_np_n(x)+c_np_{n-1}(x), \quad n\ge 0,
$$
where $(a_n)_{n\in \NN}$, $(b_n)_{n\in \NN}$ and $(c_n)_{n\in \NN}$ are sequences of real numbers with $a_{n-1}c_n\not =0$, $n\ge 1$. If, in addition, $a_{n-1}c_n>0$, $n\ge 1$,
then the polynomials $(p_n)_{n\in \NN}$ are orthogonal with respect to a positive measure with infinitely many points in its support, and conversely.
Again, Favard's Theorem is not true for a sequence of orthogonal polynomials $(p_n)_{n\in X}$ when $X\not =\NN$.

We will also need the following three lemmas.
The first one is the Sylvester's determinant identity (for the proof and a more general formulation of the Sylvester's identity see \cite{Gant}, p. 32).

\bigskip
\begin{lemma}\label{lemS}
For a square matrix $M=(m_{i,j})_{i,j=1}^k$,  and for each $1\le i, j\le k$, denote by $M_i^j$ the square matrix that results from $M$ by deleting the $i$-th row and the $j$-th column. Similarly, for $1\le i, j, p,q\le k$ denote by $M_{i,j}^{p,q}$ the square matrix that results from $M$ by deleting the $i$-th and $j$-th rows and the $p$-th and $q$-th columns.
The Sylvester's determinant identity establishes that for $i_0,i_1, j_0,j_1$ with $1\le i_0<i_1\le k$ and $1\le j_0<j_1\le k$, then
$$
\det(M) \det(M_{i_0,i_1}^{j_0,j_1}) = \det(M_{i_0}^{j_0})\det(M_{i_1}^{j_1}) - \det(M_{i_0}^{j_1}) \det(M_{i_1}^{j_0}).
$$
\end{lemma}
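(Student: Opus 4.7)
The plan is to deduce Sylvester's identity from the classical Jacobi formula for minors of the inverse (equivalently, of the adjugate) matrix. First I would reduce to the case when $M$ is invertible: both sides are polynomials in the entries $m_{i,j}$, so it suffices to prove the identity on the Zariski-dense open set where $\det(M)\neq 0$, and the general case follows by polynomial continuation.

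Assuming $\det(M)\neq 0$, set $N=M^{-1}$. The standard cofactor formula gives
$$
N_{j,i}=\frac{(-1)^{i+j}\det(M_i^j)}{\det(M)}.
$$
I would then invoke Jacobi's identity for the $2\times 2$ minor of $N$ on rows $j_0,j_1$ and columns $i_0,i_1$, namely
$$
\det\begin{pmatrix} N_{j_0,i_0} & N_{j_0,i_1}\\ N_{j_1,i_0} & N_{j_1,i_1}\end{pmatrix}
=(-1)^{i_0+i_1+j_0+j_1}\,\frac{\det\bigl(M_{i_0,i_1}^{j_0,j_1}\bigr)}{\det(M)}.
$$
Expanding the left-hand $2\times 2$ determinant via the cofactor formula for $N$ and clearing denominators by multiplying both sides by $\det(M)^2$, the common sign $(-1)^{i_0+i_1+j_0+j_1}$ on both sides cancels, and what remains is precisely the Sylvester identity.

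The only nontrivial step is establishing Jacobi's minor formula itself. My preferred route is the block-matrix trick: consider the $2k\times 2k$ matrix $\left(\begin{smallmatrix}M & I_k\\ I_k & 0\end{smallmatrix}\right)$ and compute its determinant in two ways using Schur complements, once eliminating the top-left block (which produces $\det(M)\det(-M^{-1})$ up to sign), and once performing a Laplace expansion along the first $k$ rows grouped suitably so that a $2\times 2$ minor of $M^{-1}$ and a complementary $(k-2)\times(k-2)$ minor of $M$ appear as dual factors. Matching the two evaluations gives the required identity with the correct sign.

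The main obstacle I anticipate is not conceptual but notational: bookkeeping the signs $(-1)^{i+j}$ from the cofactor formula and the permutation signs needed to move the chosen row/column indices into canonical leading positions. Once one sets up a consistent convention, all signs align. As a sanity check I would verify the identity by hand for $k=3$ with $i_0=j_0=1,\ i_1=j_1=3$, where it reduces to the classical Desnanot--Jacobi identity and can be confirmed by a direct expansion.
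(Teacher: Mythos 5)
The paper offers no proof of this lemma at all: it simply cites Gantmacher (p.\ 32), so any complete argument you give is by definition a different route. Your main reduction is correct and clean: on the Zariski-dense set where $\det(M)\neq 0$, writing $N=M^{-1}$, the cofactor formula $N_{j,i}=(-1)^{i+j}\det(M_i^j)/\det(M)$ together with Jacobi's identity for the $2\times 2$ minor of $N$ on rows $j_0,j_1$ and columns $i_0,i_1$ gives exactly the stated identity after multiplying by $\det(M)^2$; the sign $(-1)^{i_0+i_1+j_0+j_1}$ indeed appears identically in both products $N_{j_0,i_0}N_{j_1,i_1}$ and $N_{j_0,i_1}N_{j_1,i_0}$ and cancels against the sign in Jacobi's formula. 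The extension to singular $M$ by polynomial continuation is standard and unobjectionable.

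The one step that does not work as written is your sketched proof of Jacobi's minor formula via the $2k\times 2k$ matrix $\left(\begin{smallmatrix}M & I_k\\ I_k & 0\end{smallmatrix}\right)$. Its Schur-complement evaluation gives $\det(M)\det(-M^{-1})=(-1)^k$, a constant, and since $M^{-1}$ never appears among its entries, no Laplace expansion of this matrix can isolate a $2\times 2$ minor of $M^{-1}$ against a complementary $(k-2)\times(k-2)$ minor of $M$. The repair is to border $M$ by only the two relevant unit vectors: take the $(k+2)\times(k+2)$ matrix $\left(\begin{smallmatrix}M & E\\ F & 0\end{smallmatrix}\right)$, where $E$ has columns $e_{j_0},e_{j_1}$ and $F$ has rows $e_{i_0}^{T},e_{i_1}^{T}$. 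Then the Schur complement gives $\det(M)\det(-FM^{-1}E)$, i.e.\ $\det(M)$ times the desired $2\times 2$ minor of $M^{-1}$ up to sign, while cofactor expansion along the two bordering rows and columns gives $\pm\det\bigl(M_{i_0,i_1}^{j_0,j_1}\bigr)$; matching the two evaluations yields Jacobi's formula with the sign $(-1)^{i_0+i_1+j_0+j_1}$. Alternatively, you could avoid proving Jacobi altogether by the column-substitution trick (replace columns $i_0,i_1$ of the identity by the corresponding columns of $N$, multiply by $M$, and take determinants), or simply cite it, since it is as classical as the lemma itself.
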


The second and third lemmas establish some (more or less straightforward) technical properties about second order difference operators.

\bigskip
\begin{lemma}\label{lemdes}
Write $A$, $B$ and $D$ for the following  two first order and a second order difference operators
$$
A=a_0\Sh_0+a_1\Sh_1,\quad B=b_{-1}\Sh_{-1}+b_0\Sh_0,\quad
D=f_{-1}\Sh_{-1}+f_0\Sh_0+f_1\Sh_1,
$$
where $\Sh_l$ denotes the shift operator $\Sh_l(p)(x)=p(x+l)$.
Then $D=BA$ if and only if
$$
b_{-1}(x)=\frac{f_{-1}(x)}{a_0(x-1)},\quad b_0(x)=\frac{f_{1}(x)}{a_1(x)},\quad f_0(x)=\frac{f_{-1}(x)a_1(x-1)}{a_0(x-1)}+\frac{f_{1}(x)a_0(x)}{a_1(x)}.
$$
On the other hand, $D=AB$ if and only if
$$
a_{0}(x)=\frac{f_{-1}(x)}{b_{-1}(x)},\quad a_1(x)=\frac{f_{1}(x)}{b_0(x+1)},\quad f_0(x)=\frac{f_{-1}(x)b_0(x)}{b_{-1}(x)}+\frac{f_{1}(x)b_{-1}(x+1)}{b_0(x+1)}.
$$
\end{lemma}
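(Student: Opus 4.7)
The plan is a direct calculation: expand the composition $BA$ (respectively $AB$) using the commutation rule between a multiplication operator and a shift, and then match the coefficients of $\Sh_{-1}$, $\Sh_0$, $\Sh_1$ with those of $D$.

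The single ingredient needed is that for any function $c(x)$ and any integer $l$, one has $\Sh_l\, c(x) = c(x+l)\,\Sh_l$ as operators on polynomials, so that $(c(x)\Sh_l)(d(x)\Sh_m)=c(x)d(x+l)\Sh_{l+m}$. Applying this to $BA=(b_{-1}\Sh_{-1}+b_0\Sh_0)(a_0\Sh_0+a_1\Sh_1)$ gives
\begin{equation*}
BA=b_{-1}(x)a_0(x-1)\Sh_{-1}+\bigl(b_{-1}(x)a_1(x-1)+b_0(x)a_0(x)\bigr)\Sh_0+b_0(x)a_1(x)\Sh_1.
\end{equation*}
The equality $BA=D$ is therefore equivalent to the three scalar identities
\begin{equation*}
b_{-1}(x)a_0(x-1)=f_{-1}(x),\quad b_0(x)a_1(x)=f_1(x),\quad b_{-1}(x)a_1(x-1)+b_0(x)a_0(x)=f_0(x).
\end{equation*}
Solving the first two for $b_{-1}$ and $b_0$ and substituting into the third yields exactly the three formulas stated in the lemma for the case $D=BA$.

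For the second half, the same expansion applied to $AB=(a_0\Sh_0+a_1\Sh_1)(b_{-1}\Sh_{-1}+b_0\Sh_0)$ gives
\begin{equation*}
AB=a_0(x)b_{-1}(x)\Sh_{-1}+\bigl(a_0(x)b_0(x)+a_1(x)b_{-1}(x+1)\bigr)\Sh_0+a_1(x)b_0(x+1)\Sh_1,
\end{equation*}
and matching coefficients with $D$ yields $a_0(x)b_{-1}(x)=f_{-1}(x)$, $a_1(x)b_0(x+1)=f_1(x)$ together with an identity for the middle coefficient; solving the first two for $a_0$ and $a_1$ and substituting gives the formulas in the statement.

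There is no real obstacle; the only point requiring care is the direction of the shift in the commutation rule (writing $\Sh_l$ on the left introduces $x+l$, writing it on the right introduces $x-l$), which is what produces the asymmetric appearances of $a_0(x-1)$, $a_1(x-1)$ in the $BA$ formulas and of $b_{-1}(x+1)$, $b_0(x+1)$ in the $AB$ formulas.
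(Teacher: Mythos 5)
Your proof is correct: the expansion rule $(c\,\Sh_l)(d\,\Sh_m)=c(x)d(x+l)\Sh_{l+m}$ is applied properly in both orders, and matching the coefficients of $\Sh_{-1},\Sh_0,\Sh_1$ gives exactly the stated formulas. The paper omits the proof as a straightforward computation, and yours is precisely that computation, so there is nothing to add beyond noting (as the formulas implicitly assume) that the divisions require the relevant coefficients to be nonzero.
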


\bigskip
\begin{lemma}\label{lemigop}
Let $D$ and $\tilde D$ be two second order difference operators with rational coefficients. Assume that there exist polynomials $p_1$, $p_2$ and $p_3$ with degrees $d_1$, $d_2$ and $d_3$, respectively, such that $D(p_i)=\tilde D(p_i)$, $i=1,2,3$.
If $d_i>0$ and $d_i\not =d_j$, $i\not =j$, then $D=\tilde D$.
\end{lemma}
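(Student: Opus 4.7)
The plan is to consider the difference $E := D - \tilde D$, itself a second order difference operator with rational coefficients that annihilates each $p_i$; the lemma is then equivalent to $E = 0$. Writing $E = f_{-1}\Sh_{-1} + f_0\Sh_0 + f_1\Sh_1$, the conditions $E(p_i) = 0$ read, at each $x$ outside the finite pole set of the $f_j$,
\[
f_{-1}(x) p_i(x-1) + f_0(x) p_i(x) + f_1(x) p_i(x+1) = 0, \qquad i = 1, 2, 3,
\]
that is, the vector $(f_{-1}(x), f_0(x), f_1(x))$ lies in $\ker W(x)$ for the Casorati matrix $W(x) = (p_i(x+j-2))_{i,j=1}^{3}$. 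If I can show $\det W(x)$ is a nonzero polynomial, then for all but finitely many $x$ this kernel is trivial, and the $f_j$ are rational functions vanishing on an infinite set, hence identically zero, giving $E = 0$. The whole problem reduces to showing $\det W \not\equiv 0$.

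For this I would perform a column change of basis via the Taylor expansion $p_i(x+j) = \sum_{k} p_i^{(k)}(x) j^k/k!$ for $j \in \{-1, 0, 1\}$. Observing that the vector $(j^k)_{j=-1,0,1}$ is $(1,1,1)$ when $k = 0$, is $(-1,0,1)$ for every odd $k \ge 1$, and is $(1,0,1)$ for every even $k \ge 2$, one obtains a factorisation $W(x) = Q(x)\, U$ with
\[
U = \begin{pmatrix} 1 & 1 & 1 \\ -1 & 0 & 1 \\ 1 & 0 & 1 \end{pmatrix}, \qquad \det U = 2,
\]
and $i$-th row of $Q$ equal to
\[
\Bigl( p_i(x),\ \tfrac{p_i(x+1) - p_i(x-1)}{2},\ \tfrac{p_i(x+1) + p_i(x-1)}{2} - p_i(x) \Bigr).
\]
After ordering $d_1 < d_2 < d_3$ and writing $a_i$ for the leading coefficient of $p_i$, the three entries in row $i$ of $Q$ have leading monomials $a_i x^{d_i}$, $a_i d_i\, x^{d_i - 1}$, $a_i \binom{d_i}{2} x^{d_i - 2}$. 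Every term in the expansion of $\det Q$ then has the same degree $d_1 + d_2 + d_3 - 3$ in $x$, with leading coefficient
\[
a_1 a_2 a_3 \det\begin{pmatrix} 1 & d_1 & \binom{d_1}{2} \\ 1 & d_2 & \binom{d_2}{2} \\ 1 & d_3 & \binom{d_3}{2} \end{pmatrix} = \tfrac{1}{2}\, a_1 a_2 a_3 (d_2 - d_1)(d_3 - d_1)(d_3 - d_2),
\]
the last equality obtained by column operations reducing the matrix to a Vandermonde. This is nonzero by the hypotheses, so $\det W = 2 \det Q \not\equiv 0$, and therefore $E = 0$.

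I do not anticipate a genuine obstacle; the argument is linear algebra plus a Vandermonde evaluation. The one point of care concerns the small-degree cases $d_1 = 1$ or $d_2 = 2$, in which the third-column entry in the corresponding row of $Q$ vanishes identically; since $\binom{d}{2} = 0$ whenever $d \le 1$, the Vandermonde-type formula above continues to give the correct leading coefficient, so the argument goes through uniformly. The hypothesis $d_i > 0$ is exactly what ensures the three rows of $Q$ produce the distinct degree patterns $d_i, d_i-1, d_i-2$ that feed the leading-coefficient computation.
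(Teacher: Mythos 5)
Your proof is correct and follows essentially the same route as the paper: both arguments reduce the lemma to the fact that the Casorati determinant $\det\bigl(p_i(x+j-2)\bigr)_{i,j=1}^3$ is a nonzero polynomial of degree $d_1+d_2+d_3-3$, after which, at every point outside a finite exceptional set, the three equations determine the coefficients of a second order difference operator uniquely, so the rational coefficients of $D-\tilde D$ vanish on an infinite set and hence identically. The one difference is that the paper simply cites Lemma 3.4 of \cite{DdI} for the degree of this Casoratian, whereas you prove it directly through the factorization $W=QU$ and the Vandermonde evaluation of the leading coefficient $\tfrac12 a_1a_2a_3(d_2-d_1)(d_3-d_1)(d_3-d_2)$; that computation (including your remark that $\binom{d}{2}=0$ for $d\le 1$ does not disturb the formula) is correct, so your version is self-contained where the paper's is not.
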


\begin{proof}
Since $d_i\not =d_j$, $i\not =j$, we deduce from Lemma 3.4 of \cite{DdI} that the polynomial
$$
P(x)=\begin{vmatrix}p_1(x+1)&p_1(x)&p_1(x-1)\\
p_2(x+1)&p_2(x)&p_2(x-1)\\
p_3(x+1)&p_3(x)&p_3(x-1)
\end{vmatrix}
$$
has degree $d=d_1+d_2+d_3-3>0$. Hence $P(x)\not =0$ for $x\not \in X_P$, where $X_P$ is formed by at most $d$ complex numbers.
Given any three rational functions $g_1,g_2,g_3$, write $Y$ for the set formed by their poles.
Then, for each $x\not \in X_P\cup Y$, the linear system of equations
$D(p_i)(x)=g_i(x)$ defines uniquely the value at $x$ of the coefficients of the second order difference operator $D$. Since $D(p_i)=\tilde D(p_i)$, $i=1,2,3$, we can conclude that $D=\tilde D$ since their coefficients are equal.
\end{proof}

Given a finite set of numbers $F=\{f_1,\cdots, f_k\}$ we denote by $V_F$ the Vandermonde determinant defined by
\begin{align}\label{defvdm}
V_F=\prod_{1=i<j=k}(f_j-f_i).
\end{align}

\subsection{Symmetric operators}

Consider a  measure $\mu$ with finite moments of any order (so that we can integrate polynomials with respect to $\mu $). Let $\Aa $ be a linear subspace of the linear space of polynomials $\PP$.
We say that a linear operator $T:\Aa\to \PP$ is symmetric with respect to the pair $(\mu , \Aa)$ if
$\langle T(p),q\rangle _\mu =\langle p,T(q)\rangle _ \mu$
for all polynomials $p, q \in \Aa$, where the bilinear form $\langle \cdot, \cdot \rangle _\mu $ is defined by (\ref{bf}). The following Lemma is then straightforward.

\begin{lemma}\label{lsyo} Let $T$ be a symmetric operator with respect to the pair $(\mu ,\Aa)$. Assume we have polynomials $r_n\in \Aa$, $n\in X\subset \NN$, which are
eigenfunctions for the operator $T$ with different eigenvalues, that is,
$T(r_n)=\lambda_n r_n$, $n\in X$, and $\lambda _n\not =\lambda_m$, $n\not =m$. Then the polynomials $r_n$, $n\in X$, are orthogonal with respect to $\mu$.
\end{lemma}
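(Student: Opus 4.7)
The plan is to use the standard two-line argument that underlies essentially every Sturm–Liouville style orthogonality theorem: exploit the symmetry of $T$ by evaluating $\langle T(r_n), r_m \rangle_\mu$ in two different ways and then isolate the inner product using that the eigenvalues differ.

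More precisely, I would fix $n, m \in X$ with $n \neq m$. Since $r_n, r_m \in \A$ and $T$ is symmetric with respect to the pair $(\mu, \A)$, the identity $\langle T(r_n), r_m\rangle_\mu = \langle r_n, T(r_m)\rangle_\mu$ holds. Substituting the eigenvalue relations $T(r_n) = \lambda_n r_n$ and $T(r_m) = \lambda_m r_m$ on each side, and pulling the scalar eigenvalues out of the bilinear form, this rewrites as $(\lambda_n - \lambda_m)\langle r_n, r_m\rangle_\mu = 0$. The assumption $\lambda_n \neq \lambda_m$ forces $\langle r_n, r_m\rangle_\mu = 0$, which is exactly the asserted orthogonality.

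There is essentially no obstacle here: no convergence or integrability issue arises because the polynomials $r_n$ lie in the domain $\A$ on which $T$ is assumed symmetric, and $\mu$ has finite moments of all orders so the bilinear form (\ref{bf}) is well defined on all products of polynomials. The only thing worth being careful about is to note explicitly that the bilinear form is linear in each argument, which is immediate from its definition as an integral against $\mu$. Thus the lemma reduces to the single displayed manipulation above.
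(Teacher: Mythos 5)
Your argument is correct and is precisely the standard computation the paper has in mind when it declares this lemma ``straightforward'' and omits the proof: symmetry gives $\langle T(r_n),r_m\rangle_\mu=\langle r_n,T(r_m)\rangle_\mu$, hence $(\lambda_n-\lambda_m)\langle r_n,r_m\rangle_\mu=0$, and the distinctness of the eigenvalues forces orthogonality. Nothing is missing, and your remark that no integrability issues arise because $\mu$ has finite moments and $r_n\in\Aa$ is exactly the right observation.
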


When $\mu$ is a discrete measure, the symmetry of a finite order difference operator $D=\sum_{l=-r}^rh_l\Sh _l$ with respect to a pair $(\mu, \Aa)$ can be guaranteed by a finite set of difference equations together with certain boundary conditions. The proof follows as that of Theorem 3.2 in \cite{du0}
and it is omitted.

\begin{lemma}\label{tcsd} Let $\mu$  be a  discrete measure supported on a countable set $X$, $X\subset \RR$.
Consider a finite order difference operator $T:\Aa \to \PP$ of the form $T=\sum _{l=-r}^rh_l\Sh_l $, where $\Aa $ is a linear subspace of the linear space of polynomials $\PP$. Assume that the
measure $\mu $ and the coefficients $h_l$, $l=-r,\cdots , r$, of
$T$ satisfy the  difference equations
\begin{equation}\label{desm}
h_l(x-l)\mu (x-l)=h_{-l}(x)\mu (x),\quad \mbox{for $x\in (l+X)\cap X$ and $l=1, \cdots, r$,}
\end{equation}
and the boundary conditions
\begin{align}\label{bc1}
h_l(x-l)&=0, \quad
\mbox{for $x\in (l+X)\setminus X$ and $l=1, \cdots, r$,}\\
\label{bc2}
h_{-l}(x)&=0, \quad
\mbox{for $x\in X\setminus (l+X)$ and $l=1, \cdots, r$.}
\end{align}
Then $T$ is symmetric with respect to the pair $(\mu, \Aa)$.
(Let us remind that for a set of numbers $A$ and a number $b$, we denote by $b+A$ the set $b+A=\{ b+a: a\in A\}$).
\end{lemma}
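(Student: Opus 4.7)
The plan is to unfold both bilinear forms as iterated sums indexed by the shift parameter $l$ and the support points $x\in X$, and then match them shift by shift. Since $\mu$ is discrete and supported on $X$, one has
\begin{align*}
\langle T(p),q\rangle_\mu &= \sum_{l=-r}^{r}\sum_{x\in X}h_l(x)\,p(x+l)\,q(x)\,\mu(x),\\
\langle p,T(q)\rangle_\mu &= \sum_{l=-r}^{r}\sum_{x\in X}h_l(x)\,p(x)\,q(x+l)\,\mu(x);
\end{align*}
these sums are well defined because $p,q\in\Aa$ are polynomials and $\mu$ has finite moments of every order.

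The key step is a change of variable. In the inner sum of the second display I set $y=x+l$; since $x\in X$ is equivalent to $y\in l+X$, the $l$-th summand becomes (after renaming $y$ back to $x$)
$$\sum_{x\in l+X}h_l(x-l)\,p(x-l)\,q(x)\,\mu(x-l).$$
I would then compare this with the $(-l)$-th summand of $\langle T(p),q\rangle_\mu$, which equals $\sum_{x\in X}h_{-l}(x)\,p(x-l)\,q(x)\,\mu(x)$. Splitting each index set into the three regions $X\cap(l+X)$, $(l+X)\setminus X$, and $X\setminus(l+X)$, the common piece is identical term by term thanks to the difference equation \eqref{desm}, while the two boundary pieces vanish, the first by \eqref{bc1} and the second by \eqref{bc2}. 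The $l=0$ summands agree trivially, and the pair $l\leftrightarrow -l$ takes care of the remaining matching. Summing over $l$ yields $\langle T(p),q\rangle_\mu=\langle p,T(q)\rangle_\mu$.

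The proof is essentially bookkeeping, and the only point to be careful about is that the reindexing $y=x+l$ naturally pairs the $l$-th shift on one side with the $(-l)$-th shift on the other. This is precisely why the hypothesis \eqref{desm} couples $h_l$ with $h_{-l}$ rather than $h_l$ with itself, and why the two separate boundary conditions \eqref{bc1} and \eqref{bc2} are required — one to kill the contribution on $(l+X)\setminus X$ and the other to kill it on $X\setminus(l+X)$. No analytic difficulty arises since the discreteness of $\mu$ reduces everything to absolutely convergent sums on polynomials, and the argument closely parallels that of Theorem~3.2 in \cite{du0}, to which the author defers.
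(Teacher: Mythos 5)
Your proof is correct: the reindexing $y=x+l$ pairing the $l$-th shift of $\langle p,T(q)\rangle_\mu$ with the $(-l)$-th shift of $\langle T(p),q\rangle_\mu$, together with the three-way split of the index set handled by \eqref{desm}, \eqref{bc1} and \eqref{bc2}, is exactly the bookkeeping argument the paper has in mind when it omits the proof and defers to Theorem 3.2 of \cite{du0}. No issues beyond the routine absolute-convergence remark you already make.
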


On the other hand, when $\mu$ has a smooth density with respect to the Lebesgue measure (i. e. $d\mu =f(x)dx$), the symmetry of a second order differential operator with respect to a pair $(\mu, \Aa)$ can be guaranteed by the usual Pearson equation. The proof follows by performing an integration by parts
and it is omitted.

\begin{lemma}\label{tcsd2} Let $\mu$  be a measure having a positive $\mathcal{C}^2(I)$ density $f$ with respect to the Lebesgue measure in an interval $I\subset \RR $. Consider a second order differential operator $T:\Aa \to \PP$ of the form $T=a_2(x)\partial ^2 +a_1(x)\partial +a_0(x)$, where $\Aa $ is a linear subspace of the linear space of polynomials $\PP$, $\partial =d/dx$ and $a_2$ and $a_1$  are $\mathcal{C}^1(I)$ functions.
Assume that  $f $ and the coefficients $a_2$, $a_1$  of
$T$ satisfy the Pearson equation
$$
(a_2(x)f(x))'=a_1(x)f(x), \quad x\in I,
$$
and the boundary conditions that the limit of the functions $x^na_2(x)f(x)$ and $x^n(a_2(x)f(x))'$ vanish at the endpoints of $I$ for $n\ge 0$.
Then $T$ is symmetric with respect to the pair $(\mu, \Aa)$.
\end{lemma}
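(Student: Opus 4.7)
The plan is to establish symmetry by a direct integration by parts, using the Pearson equation to absorb the first-order term of $T$ into an exact derivative.

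First, I would fix $p,q\in \Aa$ and write out
$$
\langle T(p),q\rangle_\mu - \langle p,T(q)\rangle_\mu = \int_I \bigl[a_2(p''q - pq'') + a_1(p'q - pq')\bigr] f\,dx,
$$
noting that the zeroth-order coefficient $a_0$ contributes $a_0 pq - a_0 pq = 0$ to the integrand and so drops out. Then I would set $W = p'q - pq'$ (the usual Wronskian-type expression), which is itself a polynomial, and observe the elementary identity $p''q - pq'' = W'$. With this substitution the integrand becomes $a_2 f\, W' + a_1 f\, W$.

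Second, I would apply the Pearson equation $(a_2 f)' = a_1 f$ to rewrite the integrand as
$$
a_2 f\, W' + (a_2 f)'\, W = \bigl(a_2(x) f(x)\, W(x)\bigr)',
$$
so that the difference $\langle T(p),q\rangle_\mu - \langle p,T(q)\rangle_\mu$ equals the boundary evaluation $\bigl[a_2 f W\bigr]_{\partial I}$. It then remains to check that these boundary terms vanish: since $W$ is a polynomial, there exist constants $C_n$ and an integer $N$ with $|W(x)| \le \sum_{n=0}^N C_n |x|^n$ on $I$, so
$$
|a_2(x)f(x)W(x)| \le \sum_{n=0}^N C_n\, |x|^n |a_2(x)f(x)|,
$$
and by hypothesis each function $x^n a_2(x) f(x)$ tends to $0$ at both endpoints of $I$. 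Hence $\bigl[a_2 f W\bigr]_{\partial I}=0$, giving the desired equality $\langle T(p),q\rangle_\mu = \langle p, T(q)\rangle_\mu$.

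I do not anticipate any real obstacle: the proof is essentially the classical Sturm--Liouville symmetry argument, with the Pearson equation playing its standard role of converting $T$ into a formally self-adjoint operator with respect to the weight $f$. The only mild subtlety is organizing the boundary estimate, which is handled at once by the polynomial growth of $W$ together with the hypothesis on $x^n a_2 f$ (the companion hypothesis on $x^n(a_2 f)'=x^n a_1 f$ is not needed for this second-order case but is included in the statement for uniformity with higher-order variants).
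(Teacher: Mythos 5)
Your argument is correct and is exactly the integration-by-parts proof the paper has in mind (the paper omits it, saying only that it "follows by performing an integration by parts"): the Pearson equation turns the antisymmetrized integrand into the exact derivative $\bigl(a_2 f (p'q-pq')\bigr)'$, and the polynomial growth of the Wronskian together with the decay of $x^n a_2 f$ at the endpoints kills the boundary term. Your side remark is also accurate: for this second-order case the hypothesis on $x^n(a_2 f)'$ is not needed when the computation is organized this way.
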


\subsection{Christoffel transform}\label{secChr}
Let $\mu$ be a measure (positive or not) and assume that $\mu$ has a sequence of orthogonal polynomials
$(p_n)_{n\in \NN}$, $p_n$ with degree $n$ and $\langle p_n,p_n\rangle \not =0$ (as we mentioned above, that always happens if $\mu$ is positive, with finite moments and infinitely many points in its support).
Favard's theorem implies that the sequence of polynomials $(p_n)_n$ satisfies the three term recurrence relation ($p_{-1}=0$)
\begin{equation}\label{rrpn}
xp_n(x)=a_n^Pp_{n+1}(x)+b_n^Pp_n(x)+c_n^Pp_{n-1}(x).
\end{equation}

Given a finite set $F$ of real numbers, $F=\{f_1,\cdots , f_k\}$, $f_i<f_{i+1}$, we write $\Phi_n$, $n\ge 0$, for the $k\times k$ determinant
\begin{equation}\label{defph}
\Phi_n=\vert p_{n+j-1}(f_i)\vert _{i,j=1,\cdots , k}.
\end{equation}
Notice that $\Phi_n$, $n\ge 0$, depends on both, the finite set $F$ and the measure $\mu$. In order to stress this dependence, we sometimes write in this Section $\Phi_n^{\mu, F}$ for $\Phi_n$.

Along this Section we assume that the set $\X_\mu^F=\{ n\in \NN :\Phi_n^{\mu,F}=0\}$ is finite. We denote $\x_\mu ^F=\max \X_\mu ^F$. If
$\X_\mu ^F=\emptyset$ we take $\x_\mu ^F=-1$.

The Christoffel transform of $\mu$ associated to the annihilator polynomial $\pp$ of $F$,
$$
\pp (x)=(x-f_1)\cdots (x-f_k),
$$
is the measure defined by $ \mu_F =\pp \mu$.

Orthogonal polynomials with respect to $\mu_F$ can be constructed by means of the formula
\begin{equation}\label{mata00}
q_n(x)=\frac{1}{\pp (x)}\det \begin{pmatrix}p_n(x)&p_{n+1}(x)&\cdots &p_{n+k}(x)\\
p_n(f_1)&p_{n+1}(f_1)&\cdots &p_{n+k}(f_1)\\
\vdots&\vdots&\ddots &\vdots\\
p_n(f_k)&p_{n+1}(f_k)&\cdots &p_{n+k}(f_k) \end{pmatrix}.
\end{equation}
Notice that the degree of $q_n$ is equal to $n$ if and only if $n\not\in \X_\mu ^F$. In that case the leading coefficient $\lambda^Q_n$ of $q_n$ is
equal to $(-1)^k\lambda^P_{n+k}\Phi_n$, where $\lambda ^P_n$ denotes the leading coefficient of $p_n$.

The next Lemma follows easily using \cite{Sz}, Th. 2.5.

\begin{lemma}\label{sze}
The measure $\mu_F$ has a sequence $(q_n)_{n=0}^\infty $, $q_n$ of degree $n$, of orthogonal polynomials if and only if $\X_\mu ^F=\emptyset$.
In that case, an orthogonal polynomial of degree $n$ with respect to $\mu _F$ is given by (\ref{mata00}) and also $\langle q_n,q_n\rangle _{\mu _F}\not =0$, $n\ge 0$. If $\X_\mu \not =\emptyset$, the polynomial $q_n$ (\ref{mata00}) has still degree $n$ for $n\not \in \X_\mu^F$, and satisfies $\langle q_n,r\rangle_{\mu _F}=0$ for all polynomial $r$ with degree less than $n$ and $\langle q_n,q_n\rangle _{\mu _F}\not =0$.
\end{lemma}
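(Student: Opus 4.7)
The strategy is to read everything off the determinantal formula~(\ref{mata00}): show that $q_n$ is a polynomial, compute its degree and leading coefficient, verify orthogonality by multilinearity of the determinant in its first row, evaluate the squared norm, and finally deduce the converse from a uniqueness argument.

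First, evaluating the matrix in~(\ref{mata00}) at $x=f_i$ makes the first row coincide with the $(i{+}1)$-th row, so its determinant vanishes at each $f_i$; since $\pp$ has simple roots, the quotient $q_n$ is a polynomial. Expanding the determinant along its first row, only the cofactor of $p_{n+k}(x)$ contributes to the monomial $x^{n+k}$, and that cofactor equals $(-1)^{k}\Phi_n$. Consequently the leading coefficient of $q_n$ is $\lambda^Q_n=(-1)^k\lambda^P_{n+k}\Phi_n$, and $\deg q_n=n$ precisely when $n\notin\X_\mu^F$.

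Next, for any polynomial $r$,
\[
\int q_n(x)\,r(x)\,d\mu_F(x)=\int r(x)\,\det M(x)\,d\mu(x),
\]
where $M(x)$ is the matrix in~(\ref{mata00}). Pulling the integral into the first row replaces it by $\bigl(\int r\,p_{n+j}\,d\mu\bigr)_{j=0,\ldots,k}$. If $\deg r<n$ every entry of this row vanishes by orthogonality of the $p_m$, so $\langle q_n,r\rangle_{\mu_F}=0$. Specializing to $r=x^n$, only the $(1,1)$-entry survives, with value $h_n/\lambda^P_n$ where $h_n=\langle p_n,p_n\rangle_\mu\neq 0$; its cofactor is $\Phi_{n+1}$, and combining with the leading coefficient computed above gives
\[
\langle q_n,q_n\rangle_{\mu_F}=(-1)^k\,\frac{\lambda^P_{n+k}}{\lambda^P_n}\,h_n\,\Phi_n\,\Phi_{n+1}.
\]

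The ``if'' direction is now immediate: when $\X_\mu^F=\emptyset$, every $\Phi_n$ is non-zero and $(q_n)_{n\geq 0}$ is a full sequence of orthogonal polynomials for $\mu_F$ with non-zero norms. For the converse I would pick the smallest $n\in\X_\mu^F$; by the degree computation $\deg q_n<n$, while by the orthogonality step $q_n$ is orthogonal to every polynomial of degree $<n$, hence to itself. Provided $q_n\not\equiv 0$, this makes the Gram matrix of $1,x,\ldots,x^{n-1}$ with respect to $\mu_F$ singular and rules out a full sequence of orthogonal polynomials with non-zero norms. The only mildly delicate point, and the one I expect to be the main obstacle, is verifying that $q_n\not\equiv 0$ at this critical index: this reduces to showing that some Laplace expansion of $\det M(x)$ along a different row or column does not collapse, and can be handled as in~\cite{Sz}, Th.~2.5.
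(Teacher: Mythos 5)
The paper offers no argument for this lemma beyond the citation of \cite{Sz}, Th.~2.5, and your computational core is exactly the standard way of fleshing that citation out, so there is nothing to object to in the first three steps: the determinant in (\ref{mata00}) vanishes at each $f_i$, the leading coefficient is $(-1)^k\lambda^P_{n+k}\Phi_n$ (so $\deg q_n=n$ precisely for $n\notin\X_\mu^F$), multilinearity in the first row gives $\langle q_n,r\rangle_{\mu_F}=0$ for $\deg r<n$, and your norm evaluation coincides with (\ref{n2q}). The ``if'' direction is therefore complete. Note, however, that your own formula $\langle q_n,q_n\rangle_{\mu_F}=(-1)^k\frac{\lambda^P_{n+k}}{\lambda^P_n}\langle p_n,p_n\rangle_\mu\,\Phi_n\Phi_{n+1}$ shows that the last assertion of the lemma (nonvanishing of $\langle q_n,q_n\rangle_{\mu_F}$ for $n\notin\X_\mu^F$) also needs $n+1\notin\X_\mu^F$: for instance $k=1$ with $f_1$ a zero of $p_{n+1}$ but not of $p_n$ gives $n\notin\X_\mu^F$ and $\langle q_n,q_n\rangle_{\mu_F}=0$. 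You neither prove nor flag this point; it is harmless for the rest of the paper (where, as in (\ref{n2q}) and Lemma \ref{l3.1}, both $\Phi_n$ and $\Phi_{n+1}$ are nonzero), but your write-up should say explicitly which part of the statement your computation actually yields.

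The genuine gap is the converse, and it is exactly the point you defer: you need $q_{n_0}\not\equiv 0$ at $n_0=\min\X_\mu^F$, i.e.\ that the $k\times(k+1)$ matrix $\left(p_{n_0+j-1}(f_i)\right)_{i=1,\dots,k;\,j=1,\dots,k+1}$ keeps rank $k$ when its minor $\Phi_{n_0}$ vanishes. This cannot simply ``be handled as in \cite{Sz}, Th.~2.5'': Szeg\H o's theorem concerns a positive measure multiplied by a polynomial nonnegative on its support, whereas the present lemma allows signed $\mu$ and arbitrary real $f_1,\dots,f_k$. For $k=1$ the non-collapse does follow from the three-term recurrence (consecutive $p_n,p_{n+1}$ cannot share a zero since all $c_n^P\neq0$), but for $k\ge 2$ no argument is given and none is obvious; moreover, even granting it, your Gram-matrix argument only excludes orthogonal sequences with \emph{nonzero} norms, a caveat worth stating since the paper's definition of orthogonality in Section 2 does not require it (though the lemma's phrasing indicates that this is the intended reading). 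A clean way to close the converse without ever discussing $q_{n_0}$ is to use, for monic $p_n$, the identity $V_F\,D_{n-1}(\mu_F)=D_{n-1}(\mu)\,\Phi_n$ between Hankel determinants, where $V_F$ is the Vandermonde determinant (\ref{defvdm}): it follows from your norm formula by telescoping when all $\Phi_j\neq0$ and extends by rationality in the moments, and it shows at once that $\mu_F$ admits a full orthogonal sequence with nonzero norms if and only if $\Phi_n\neq0$ for every $n$.
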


The three term recurrence relation for the polynomials $(q_n)_n$ can be derived from the corresponding recurrence relation for the polynomials $(p_n)_n$ (\ref{rrpn}). In addition to the determinant $\Phi_n$ (\ref{defph}), $n\ge 0$, we also consider the $k\times k$ determinant
\begin{equation}\label{defps}
\Psi_n=\begin{vmatrix}
p_n(f_1)&p_{n+1}(f_1)&\cdots &p_{n+k-2}(f_1)&p_{n+k}(f_1)\\
\vdots&\vdots&\ddots &\vdots&\vdots\\
p_n(f_k)&p_{n+1}(f_k)&\cdots &p_{n+k-2}(f_k)&p_{n+k}(f_k) \end{vmatrix}.
\end{equation}

\begin{lemma}\label{lemmc}
For $n> \x_\mu ^F+1$, the polynomials $q_n$ (\ref{mata00}) satisfy the three term recurrence relation
\begin{equation}\label{rrvqn}
xq_n(x)=a_n^Qq_{n+1}(x)+b_n^Qq_n(x)+c_n^Qq_{n-1}(x),
\end{equation}
where
\begin{align*}
a^Q_n&=a_n^P\frac{\lambda^P_{n+1}\lambda^P_{n+k}}{\lambda^P_{n}\lambda^P_{n+k+1}}\frac{\Phi _n}{\Phi_{n+1}},\\
b^Q_n&= b_{n+k}^P+\frac{\lambda^P_{n+k}}{\lambda^P_{n+k+1}}\frac{\Psi _{n+1}}{\Phi_{n+1}}-\frac{\lambda^P_{n+k-1}}{\lambda^P_{n+k}}\frac{\Psi _n}{\Phi_{n}},\\
c^Q_n&=c_n^P\frac{\Phi _{n+1}}{\Phi_{n}}.
\end{align*}
Moreover,
\begin{equation}\label{n2q}
\langle q_n,q_n\rangle _{\mu_F}=(-1)^k\frac{\lambda^P_{n+k}}{\lambda^P_{n}}\Phi_n\Phi_{n+1}\langle p_n,p_n\rangle _{\mu},\quad n> \x_\mu^F+1.
\end{equation}
If $\X_\mu ^F=\emptyset$, then (\ref{rrvqn}) and (\ref{n2q}) hold for $n\ge 0$, with initial condition $q_{-1}=0$.
\end{lemma}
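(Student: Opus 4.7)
My plan is to use the first-row expansion of the determinant defining $q_n$ as the main computational tool. Expanding (\ref{mata00}) along its top row gives
$$\pp(x)q_n(x)=\sum_{j=0}^{k}(-1)^{j}M_{j}^{(n)}p_{n+j}(x),$$
where $M_{j}^{(n)}$ is the $k\times k$ minor obtained by deleting the column carrying $p_{n+j}(x)$. Direct inspection identifies $M_{0}^{(n)}=\Phi_{n+1}$, $M_{k-1}^{(n)}=\Psi_{n}$, and $M_{k}^{(n)}=\Phi_{n}$; the last recovers the leading coefficient $\lambda_{n}^{Q}=(-1)^{k}\lambda_{n+k}^{P}\Phi_{n}$ stated in the preamble. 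The norm formula (\ref{n2q}) now drops out: write $\langle q_n,q_n\rangle_{\mu_F}=\int q_n(\pp q_n)\,d\mu$ and apply the expansion. Orthogonality of the $p_j$ under $\mu$ kills every $j\geq 1$ term, since $q_n$ has degree exactly $n$ for $n\notin \X_\mu^F$ (Lemma \ref{sze}). Only the $j=0$ term survives, giving $\Phi_{n+1}(\lambda_n^Q/\lambda_n^P)\langle p_n,p_n\rangle_\mu$, which after substituting $\lambda_n^Q$ is precisely (\ref{n2q}).

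For the recurrence (\ref{rrvqn}), I would proceed by direct determinantal verification. The hypothesis $n>\x_\mu^F+1$ ensures, via Lemma \ref{sze}, that $q_{n-1}$, $q_n$, $q_{n+1}$ all have their nominal degrees and nonzero $\mu_F$-norms. Apply the three-term relation $xp_{n+j}=a_{n+j}^Pp_{n+j+1}+b_{n+j}^Pp_{n+j}+c_{n+j}^Pp_{n+j-1}$ term-by-term to the expansion above to write $x\pp q_n$ as a combination of $p_{n-1},\ldots,p_{n+k+1}$, and similarly expand $\pp q_{n\pm 1}$. Setting $s:=xq_n-a_n^Qq_{n+1}-b_n^Qq_n-c_n^Qq_{n-1}$, the polynomial $\pp s$ is then an explicit linear combination of $\{p_{n-2},\ldots,p_{n+k+1}\}$ whose coefficients are linear in $a_n^Q,b_n^Q,c_n^Q$ and rational in the minors $\Phi_m,\Psi_m$. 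Requiring the coefficient of $p_{n+k+1}$ in $\pp s$ to vanish yields $a_n^Q=a_{n+k}^P\Phi_n/\Phi_{n+1}$, which rewrites via $a_m^P=\lambda_m^P/\lambda_{m+1}^P$ as the stated formula. Killing the coefficient of $p_{n+k}$ and substituting this $a_n^Q$ isolates $b_n^Q$ in the stated form. Finally, the coefficient of $p_{n-1}$ in $\pp s$ yields $c_n^Q=c_n^P\Phi_{n+1}/\Phi_n$; as a cross-check, the same expression arises from the standard relation $c_n^Q=a_{n-1}^Q\langle q_n,q_n\rangle_{\mu_F}/\langle q_{n-1},q_{n-1}\rangle_{\mu_F}$ combined with (\ref{n2q}).

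The delicate point I expect is to conclude $\pp s\equiv 0$, hence $s\equiv 0$. Once $a_n^Q,b_n^Q,c_n^Q$ are fixed by the three conditions above, one must check that the remaining $k$ coefficients of $\pp s$ (those of $p_n,p_{n+1},\ldots,p_{n+k-1}$) vanish automatically. This reduces to an algebraic identity among $\Phi_m$ and $\Psi_m$ whose verification uses the three-term recurrence for $p_n$ evaluated at each of the $f_i$ (supplying $k$ linear relations $f_i\,p_n(f_i)=a_n^P p_{n+1}(f_i)+b_n^P p_n(f_i)+c_n^P p_{n-1}(f_i)$ that make the extra coefficients collapse) together with Sylvester-type manipulations of $k\times k$ determinants as in Lemma \ref{lemS}. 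In the special case $\X_\mu^F=\emptyset$ this obstacle evaporates: the $q_n$ then form a complete sequence of nonzero-norm orthogonal polynomials, so Favard's theorem delivers (\ref{rrvqn}) and (\ref{n2q}) for every $n\ge 0$ with $q_{-1}=0$, consistent with the last assertion of the lemma.
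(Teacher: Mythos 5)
Your norm computation is correct and essentially the same in spirit as the paper's (expand the determinant defining $q_n$, use orthogonality of the $p_m$ to kill all but one term), and your identification of the coefficients is consistent: killing the coefficients of $p_{n+k+1}$, $p_{n+k}$ and $p_{n-1}$ does yield exactly the stated $a_n^Q$, $b_n^Q$, $c_n^Q$. But for the recurrence itself your route diverges from the paper's, and it is precisely at the point you flag as ``delicate'' that the proof is not actually given. The heart of the lemma is that (\ref{rrvqn}) \emph{holds} for $n>\x_\mu^F+1$; your three extreme coefficient equations only show that \emph{if} a three-term relation holds then its coefficients must be the stated ones. The vanishing of the remaining $k$ coefficients of $\pp s$ (those of $p_n,\dots,p_{n+k-1}$) is asserted, not proved, and your description of it as ``an algebraic identity among $\Phi_m$ and $\Psi_m$'' is not accurate: those coefficients involve the interior $k\times k$ minors $M_l^{(n)}$, $M_{l-1}^{(n+1)}$, $M_{l+1}^{(n-1)}$ obtained by deleting middle columns, which are neither $\Phi$'s nor $\Psi$'s, so the promised Sylvester-type verification is left entirely open. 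The Favard-type remark only disposes of the case $\X_\mu^F=\emptyset$; in the general case ($\X_\mu^F\neq\emptyset$, $n>\x_\mu^F+1$) nothing in the proposal establishes existence of the recurrence.

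The gap is fillable, in two ways. (i) Within your own scheme: with $a_n^Q,b_n^Q,c_n^Q$ chosen as above, $\pp s$ lies in the span of $p_n,\dots,p_{n+k-1}$; on the other hand $\pp s$ vanishes at $f_1,\dots,f_k$ because $\pp$ does, and the matrix of evaluations of $p_n,\dots,p_{n+k-1}$ at the $f_i$ has determinant $\Phi_n\neq0$ (since $n>\x_\mu^F$), so $\pp s=0$ and hence $s=0$ --- no minor identities needed. (ii) The paper's route: use Lemma \ref{sze} to see that for $n>\x_\mu^F+1$ the (monic normalizations of the) polynomials $q_{n-1},q_n,q_{n+1}$ are $\mu_F$-orthogonal to all lower degrees, deduce first that a three-term relation exists, and only then compute its coefficients and the norm (\ref{n2q}) by evaluating inner products such as $\langle xq_n,p_{n-1}\rangle_{\mu_F}$ directly on the determinant (\ref{mata00}), where orthogonality annihilates all but one entry of the first row. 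As written, your proposal proves (\ref{n2q}) and the uniqueness of the coefficients, but not the recurrence (\ref{rrvqn}) in the range $n>\x_\mu^F+1$ that the lemma claims.
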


\begin{proof}
We can assume that $p_n$ are monic (that is, $\lambda^P_n=1$). Write $\hat q_n(x)=q_n(x)/\lambda_n^Q$, $n> \x_\mu^F +1$.
It is then enough to prove that
$$
x\hat q_n(x)=a_n\hat q_{n+1}(x)+b_n\hat q_n(x)+c_n\hat q_{n-1}(x)
$$
with
\begin{align}\label{rrma}
a_n&=1,\\\label{rrmb}
b_n&=b_{n+k}^P+\frac{\Psi _{n+1}}{\Phi_{n+1}}-\frac{\Psi _n}{\Phi_{n}},\\\label{rrmc}
c_n&=c_n^P\frac{\Phi _{n-1}\Phi_{n+1}}{\Phi^2_{n}},\\\label{rrmq}
\langle q_n,q_n\rangle _{\mu_F} &=\frac{\langle p_{n},p_{n}\rangle _{\mu}\Phi_{n+1}}{\Phi_{n}}.
\end{align}
We write $u_n(x)=x^n$ for $n\le \x_\mu^F+1$, and $u_n(x)=\hat q_n (x)$ for $n>\x_\mu^F+1$. Then the polynomials $u_n$, $n\ge 0$, form a basis of $\PP$. From the previous lemma,
we also have for $n>\x ^F_\mu$ that $\langle \hat q_n,u_j\rangle _{\mu_F}=0$, $j=0,\cdots , n-1$.
Taking this into account, it is easy to deduce that the polynomials $\hat q_n$, $n> \x_\mu^F+1$, satisfy a three term recurrence relation
$$
x\hat q_n(x)=a_n\hat q_{n+1}(x)+b_n\hat q_n(x)+c_n\hat q_{n-1}(x).
$$
Since they are monic, we straightforwardly have $a_n=1$, that is, (\ref{rrma}). We compute $c_n$ as
$$
c_n=\frac{\langle x\hat q_n,\hat q_{n-1}\rangle _{\mu_F}}{\langle \hat q_{n-1},\hat q_{n-1}\rangle _{\mu_F}}=
\frac{\langle x\hat q_n, p_{n-1}\rangle _{\mu_F}}{\langle \hat q_{n-1},\hat q_{n-1}\rangle _{\mu_F}}.
$$
Using (\ref{mata00}), we get
\begin{align*}
\langle x\hat q_n,p_{n-1}\rangle _{\mu_F}&=\frac{(-1)^k}{\Phi_n}\begin{vmatrix}\langle xp_n,p_{n-1}\rangle _{\mu}&\langle xp_{n+1},p_{n-1}\rangle _{\mu}
&\cdots &\langle xp_{n+k},p_{n-1}\rangle _{\mu}\\
p_n(f_1)&p_{n+1}(f_1)&\cdots &p_{n+k}(f_1)\\
\vdots&\vdots&\ddots &\vdots\\
p_n(f_k)&p_{n+1}(f_k)&\cdots &p_{n+k}(f_k) \end{vmatrix}\\
&=\frac{(-1)^k}{\Phi_n}\begin{vmatrix}c^P_n\langle p_{n-1},p_{n-1}\rangle _{\mu}&0
&\cdots &0\\
p_n(f_1)&p_{n+1}(f_1)&\cdots &p_{n+k}(f_1)\\
\vdots&\vdots&\ddots &\vdots\\
p_n(f_k)&p_{n+1}(f_k)&\cdots &p_{n+k}(f_k) \end{vmatrix}\\
&=(-1)^k\frac{c^P_n\langle p_{n-1},p_{n-1}\rangle _{\mu}\Phi_{n+1}}{\Phi_n}.
\end{align*}
In a similar way, one finds that
$$
\langle \hat q_{n-1},\hat q_{n-1}\rangle _{\mu_F}=(-1)^k\frac{\langle p_{n-1},p_{n-1}\rangle _{\mu}\Phi_{n}}{\Phi_{n-1}}.
$$
(\ref{rrmc}) and (\ref{rrmq}) can now be easily deduced. (\ref{rrmb}) can be proved analogously.

\end{proof}

For $\X_\mu ^F=\emptyset$, the previous lemma has already appeared in the literature (see for instance \cite{YZ}).

\subsection{Finite set of positive integers}\label{sfspi}
From now on, $F$ will denote a finite set of positive integers. We will write $F=\{ f_1,\cdots , f_k\}$, with $f_i<f_{i+1}$. Hence $k$ is the number of elements of $F$ and $f_k$ is the maximum element of $F$.

We associate to $F$ the nonnegative integers $u_F$ and $v_F$ and the infinite set of nonnegative integers $\sigma_F$ defined by
\begin{align}\label{defuf}
u_F&=\sum_{f\in F}f-\binom{k+1}{2},\\\label{defvf}
v_F&=\sum_{f\in F}f+f_k-\frac{(k-1)(k+2)}{2},\\\label{defsf}
\sigma _F&=\{u_F,u_F+1,u_F+2,\cdots \}\setminus \{u_F+f,f\in F\}.
\end{align}
The infinite set $\sigma_F$ will be the set of indices for the exceptional Charlier or Hermite polynomials associated to $F$.

Notice that $v_F=u_F+f_k+1$; hence $\{v_F,v_F+1,v_F+2,\cdots \}\subset \sigma_F$. Notice also that
$u_F$ is an increasing function with respect to the inclusion order, that is, if $F\subset \tilde F$ then $u_F\le u_{\tilde F}$.

Consider the set $\Upsilon$  formed by all finite sets of positive  integers:
\begin{align*}
\Upsilon=\{F:\mbox{$F$ is a finite set of positive integers}\} .
\end{align*}
We consider the involution $I$ in $\Upsilon$ defined by
\begin{align}\label{dinv}
I(F)=\{1,2,\cdots, f_k\}\setminus \{f_k-f,f\in F\}.
\end{align}
The definition of $I$ implies that $I^2=Id$.

For the involution $I$, the bigger the holes in $F$ (with respect to the set $\{1,2,\cdots , f_k\}$), the bigger the involuted set $I(F)$.
Here it is a couple of examples
$$
I(\{ 1,2,3,\cdots ,k\})=\{ k\},\quad \quad I(\{1, k\})=\{ 1,2,\cdots, k-2, k\}.
$$
The set $I(F)$ will be denoted by $G$: $G=I(F)$. We also write $G=\{g_1,\cdots , g_m\}$ with $g_i<g_{i+1}$ so that $m$ is the number of elements of $G$ and $g_m$ the maximum element of $G$. Notice that
$$
f_k=g_m,\quad m=f_k-k+1.
$$
We also define the number $s_F$ by
\begin{equation}\label{defs0}
s_F=\begin{cases} 1,& \mbox{if $F=\emptyset$},\\
k+1,&\mbox{if $F=\{1,2,\cdots , k\}$},\\
\min \{s\ge 1:s<f_s\}, & \mbox{if $F\not =\{1,2,\cdots k\}$}.
\end{cases}
\end{equation}

For $1\le i\le k$, we denote by $F_{\{ i\}}$ and $F_{\Downarrow}$ the finite sets of positive integers defined by
\begin{align}\label{deff0}
F_{\{ i\} }&=F\setminus \{f_i\},\\\label{deff1}
F_{\Downarrow}&=\begin{cases} \emptyset,& \mbox{if $F=\{1,2,\cdots , k\}$,}\\
\{f_{s_F}-s_F,\cdots , f_k-s_F\},& \mbox{if $F\not =\{1,2,\cdots , k\}$}.
\end{cases}
\end{align}
The following relation is straightforward from (\ref{dinv}), (\ref{deff0}) and (\ref{deff1}):
\begin{equation}\label{ref0f1}
F_{\Downarrow}=I\left(G _{\{ m\} }\right)
\end{equation}
(where as indicated above $G=I(F)$ and $m$ is the number of elements of $G$).

\subsection{Charlier and Hermite polynomials}
We include here basic definitions and facts about Charlier and Hermite polynomials, which we will need in the following sections.

For $a\neq0$, we write $(c_n^a)_n$ for the sequence of Charlier polynomials (the next formulas can be found in \cite{Ch}, pp. 170-1; see also \cite{KLS}, pp., 247-9 or \cite{NSU}, ch. 2) defined by
\begin{equation}\label{Chpol}
    c_n^a(x)=\frac{1}{n!}\sum_{j=0}^n(-a)^{n-j}\binom{n}{j}\binom{x}{j}j!.
\end{equation}
The Charlier polynomials are orthogonal with respect to the measure
\begin{equation}\label{Chw}
    \rho_a=\sum_{x=0}^{\infty}\frac{a^x}{x!}\delta_x,\quad a\neq0,
\end{equation}
which is positive only when $a>0$ and then
\begin{equation}\label{norCh}
\langle c_n^a,c_n^a\rangle=\frac{a^n}{n!}e^a.
\end{equation}
The three-term recurrence formula for $(c_n^a)_n$ is ($c_{-1}^a=0$)
\begin{equation}\label{Chttrr}
   xc_n^a=(n+1)c_{n+1}^a+(n+a)c_n^a+ac_{n-1}^a,\quad n\geq0.
\end{equation}
They are eigenfunctions of the following second-order difference operator
\begin{equation}\label{Chdeq}
   D_a=-x\Sh_{-1}+(x+a)\Sh_0-a\Sh_1,\quad D_a(c_n^a)=nc_n^a,\quad n\geq0,
\end{equation}
where $\Sh_j(f)=f(x+j)$. They also satisfy
\begin{equation}\label{Chlad}
   \Delta(c_n^a)=c_{n-1}^a, \quad \frac{d}{da}(c_n^a)=-c_{n-1}^a,
\end{equation}
and the duality
\begin{equation}\label{Chdua}
   (-1)^ma^mn!c_n^a(m)=(-1)^na^nm!c_{m}^a(n), \quad n,m\ge 0.
\end{equation}

We write $(H_n)_n$ for the sequence of Hermite polynomials (the next formulas can be found in \cite{Ch}, Ch. V; see also \cite{KLS}, pp, 250-3) defined by
\begin{equation}\label{Hpol}
 H_n(x)=n!\sum_{j=0}^{[n/2]}\frac{(-1)^j(2x)^{n-2j}}{j!(n-2j)!}.
\end{equation}
The Hermite polynomials are orthogonal with respect to the weight function $e^{-x^2}$, $x\in \RR$.
They are eigenfunctions of the following second-order differential operator
\begin{equation}\label{Hdeq}
   D=\partial ^2-2x\partial,\quad D(H_n)=-2nH_n,\quad n\geq0,
\end{equation}
where $\partial =d/dx$.

They also satisfy $H_n'(x)=2nH_{n-1}(x)$.

One can obtain Hermite polynomials from Charlier polynomials using the limit
\begin{equation}\label{blchh}
\lim_{a\to \infty}\left(\frac{2}{a}\right)^{n/2}c_n^a(\sqrt {2a}x+a)=\frac{1}{n!}H_n(x)
\end{equation}
see \cite{KLS}, p. 249 (take into account that if we write $(C_n^a)_n$ for the polynomials defined by (9.14.1) in \cite{KLS}, p. 247,
then $c_n^a=(-a)^nC_n^a/n!$). The previous limit is uniform in compact sets of $\CC$.

\section{Constructing polynomials which are eigenfunctions of second order difference operators}
As in Section \ref{sfspi}, $F$ will denote a finite set of positive integers. We will write $F=\{ f_1,\cdots , f_k\}$, with $f_i<f_{i+1}$. Hence $k$ is the number of elements of $F$ and $f_k$ is the maximum element of $F$.

We associate to each finite set $F$ of positive integers the polynomials $c_n^{a;F}$, $n\in \sigma_F$, displayed in the following definition.
It turns out that these polynomials are always eigenfunctions of a second order difference operator with rational coefficients. We call them exceptional Charlier polynomials when, in addition, they are orthogonal and complete with respect to a positive measure (this will happen as long as the finite set $F$ is admissible; see Definition \ref{defadm} in the next Section).

\begin{definition}
For a given real number $a\not =0$ and a finite set $F$ of positive integers, we define the polynomials $c_n^{a;F}$, $n\in \sigma _F$, as
\begin{equation}\label{defchex}
c_n^{a;F}(x)=\begin{vmatrix}c_{n-u_F}^a(x)&c_{n-u_F}^a(x+1)&\cdots &c_{n-u_F}^a(x+k)\\
c_{f_1}^a(x)&c_{f_1}^a(x+1)&\cdots &c_{f_1}^a(x+k)\\
\vdots&\vdots&\ddots &\vdots\\
c_{f_k}^a(x)&c_{f_k}^a(x+1)&\cdots &c_{f_k}^a(x+k) \end{vmatrix} ,
\end{equation}
where the number $u_F$ and the infinite set of nonnegative integers $\sigma _F$ are defined by (\ref{defuf}) and (\ref{defsf}), respectively.
\end{definition}

To simplify the notation, we will sometimes write $c_n^F=c_n^{a;F}$.

Using Lemma 3.4 of \cite{DdI}, we deduce that $c_n^F$, $n\in \sigma _F$, is a polynomial of degree $n$ with leading coefficient equal to
\begin{equation}\label{lcrn}
\frac{\prod_{i=1}^k(f_i-n+u_F)}{(n-u_F)!\prod_{f\in F}f!}V_F,
\end{equation}
where $V_F$ is the Vandermonde determinant (\ref{defvdm}). With the convention that $c_n^a=0$ for $n<0$, the determinant (\ref{defchex}) defines a polynomial for any $n\ge 0$, but for $n\not \in \sigma_F$ we have $c_n^F=0$.

Combining columns in (\ref{defchex}) and taking into account the first formula in (\ref{Chlad}), we have the alternative definition
\begin{equation}\label{defchexa}
c_n^F(x)=\begin{vmatrix}c_{n-u_F}^a(x)&c_{n-u_F-1}^a(x)&\cdots &c_{n-u_F-k}^a(x)\\
c_{f_1}^a(x)&c_{f_1-1}^a(x)&\cdots &c_{f_1-k}^a(x)\\
\vdots&\vdots&\ddots &\vdots\\
c_{f_k}^a(x)&c_{f_k-1}^a(x)&\cdots &c_{f_k-k}^a(x) \end{vmatrix} .
\end{equation}

The  polynomials $c_n^F$, $n\in \sigma_F$, are strongly related by duality with the polynomials $q_n^F$, $n\ge 0$, defined by
\begin{equation}\label{defqnch}
q_n^F(x)=\frac{\begin{vmatrix}c_n^a(x-u_F)&c_{n+1}^a(x-u_F)&\cdots &c_{n+k}^a(x-u_F)\\
c_n^a(f_1)&c_{n+1}^a(f_1)&\cdots &c_{n+k}^a(f_1)\\
\vdots&\vdots&\ddots &\vdots\\
c_n^a(f_k)&c_{n+1}^a(f_k)&\cdots &c_{n+k}^a(f_k) \end{vmatrix}}{\prod_{f\in F}(x-f-u_F)} .
\end{equation}

\begin{lemma}\label{lem3.2}
If $u$ is a nonnegative integer and $v\in \sigma_F$, then
\begin{equation}\label{duaqnrn}
q_u^F(v)=\xi_u\zeta_vc_v^F(u),
\end{equation}
where
$$
\xi_u=\frac{(-a)^{(k+1)u}}{\prod_{i=0}^k(u+i)!},\quad \zeta_v=\frac{(-a)^{-v}(v-u_F)!\prod_{f\in F}f!}{\prod_{f\in F}(v-f-u_F)}.
$$
\end{lemma}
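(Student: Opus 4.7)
The plan is to prove the duality by applying the classical Charlier self-duality identity \eqref{Chdua} entry-wise to the determinant defining $c_v^F(u)$, and then identifying the result with $q_u^F(v)$.

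First, I would rewrite \eqref{Chdua} in the asymmetric form
$$
c_n^a(m)=\frac{(-a)^{n-m}m!}{n!}\, c_m^a(n),\quad n,m\ge 0.
$$
Next, I would index the entries of the matrix in \eqref{defchex} defining $c_v^F(u)$ by putting $r_0=v-u_F$ and $r_i=f_i$ for $i=1,\dots,k$, so that the $(i,j)$-entry (for $i=0,\dots,k$ and $j=0,\dots,k$) is $c_{r_i}^a(u+j)$. Applying the identity above converts this entry into
$$
c_{r_i}^a(u+j)=\frac{(-a)^{r_i}}{r_i!}\cdot(-a)^{-u-j}(u+j)!\cdot c_{u+j}^a(r_i),
$$
a product of a row factor, a column factor, and the $(i,j)$-entry of the matrix appearing in the numerator of $q_u^F(v)$ in \eqref{defqnch}. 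Pulling these factors out of the determinant gives
$$
c_v^F(u)=\left(\prod_{i=0}^k\frac{(-a)^{r_i}}{r_i!}\right)\left(\prod_{j=0}^k(-a)^{-u-j}(u+j)!\right)\cdot \prod_{f\in F}(v-f-u_F)\cdot q_u^F(v).
$$

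The second step is the bookkeeping. Using the definition $u_F=\sum_{f\in F}f-\binom{k+1}{2}$, the row factor equals
$$
\frac{(-a)^{v-u_F+\sum_{f\in F}f}}{(v-u_F)!\prod_{f\in F}f!}=\frac{(-a)^{v+\binom{k+1}{2}}}{(v-u_F)!\prod_{f\in F}f!},
$$
while $\sum_{j=0}^k(u+j)=(k+1)u+\binom{k+1}{2}$ turns the column factor into
$$
(-a)^{-(k+1)u-\binom{k+1}{2}}\prod_{i=0}^k(u+i)!.
$$
Multiplying the two, the $(-a)^{\pm\binom{k+1}{2}}$ factors cancel, and solving for $q_u^F(v)$ yields
$$
q_u^F(v)=\frac{(-a)^{(k+1)u}}{\prod_{i=0}^k(u+i)!}\cdot\frac{(-a)^{-v}(v-u_F)!\prod_{f\in F}f!}{\prod_{f\in F}(v-f-u_F)}\cdot c_v^F(u),
$$
which is exactly $\xi_u\zeta_v c_v^F(u)$ with the stated constants. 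Note that for $v\in\sigma_F$ none of the factors $v-f-u_F$ vanishes, so $\zeta_v$ is well-defined.

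The argument is essentially a direct computation; there is no real obstacle beyond careful bookkeeping of signs and factorials. The only place one needs to be a bit careful is to correctly identify $\sum_{f\in F}f=u_F+\binom{k+1}{2}$ so that the $\binom{k+1}{2}$ contributions from rows and columns cancel.
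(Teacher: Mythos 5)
Your proof is correct and is exactly the argument the paper has in mind: the paper's proof consists of the single sentence that the lemma is ``a straightforward consequence of the duality (\ref{Chdua}) for the Charlier polynomials,'' and your entry-wise application of that duality to the determinant (\ref{defchex}), followed by extraction of the row and column factors and the bookkeeping with $u_F=\sum_{f\in F}f-\binom{k+1}{2}$, is precisely that straightforward computation spelled out. No issues.
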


\begin{proof}
It is a straightforward consequence of the duality (\ref{Chdua}) for the Charlier polynomials.

\end{proof}

We now prove that the polynomials $c_n^F$, $n\in \sigma_F$, are eigenfunctions of a second order difference operator with rational coefficients. To establish the result in full, we need some more notations.
We denote by $\Omega _F^a (x)$ and $\Lambda _F^a(x)$ the polynomials
\begin{align}\label{defom}
\Omega _F^a(x)&=|c_{f_i}^a(x+j-1)|_{i,j=1}^k,\\
\label{deflam}
\Lambda _F^a(x)&=\begin{vmatrix}
c_{f_1}^a(x)&c_{f_1}^a(x+1)&\cdots &c_{f_1}^a(x+k-2)&c_{f_1}^a(x+k)\\
\vdots&\vdots&\ddots &\vdots\\
c_{f_k}^a(x)&c_{f_k}^a(x+1)&\cdots &c_{f_k}^a(x+k-2)&c_{f_k}^a(x+k) \end{vmatrix} .
\end{align}
To simplify the notation, we will sometimes write $\Omega ^F=\Omega_F^{a}$ and $\Lambda ^F=\Lambda_F^{a}$.

Using Lemma 3.4 of \cite{DdI} and the definition of $u_F$ (\ref{defuf}), we deduce that the degree of both $\Omega _F$ and $\Lambda_F$
is $u_F+k$. From (\ref{defchex}) and (\ref{defom}), we have
\begin{equation}\label{rrom0}
\Omega _F(x)=(-1)^{k-1}c_{f_k+u_{F_{\{ k\} }}}^{F_{\{ k\} }}(x),
\end{equation}
where the finite set of positive integers $F_{\{ k\}}$ is defined by (\ref{deff0}).

As for $c_n^F$ (see (\ref{defchexa})), we have for $\Omega_F$ the following alternative definition
\begin{equation}\label{defoma}
\Omega _F(x)=|c_{f_i-j+1}^a(x)|_{i,j=1}^k.
\end{equation}
From here and (\ref{defchexa}), it is easy to deduce that
\begin{equation}\label{rrom}
c_{u_F}^F(x)=\Omega_{F_\Downarrow }(x),
\end{equation}
where the finite set of positive integers $F_\Downarrow$ is defined by (\ref{deff1}).

A simple calculation using the third formula in (\ref{Chlad}) shows that
\begin{equation}\label{relomla}
\Lambda ^a_F(x)=k\Omega ^a_F(x)-\frac{d}{da}\Omega ^a_F(x).
\end{equation}
We also need the determinants $\Phi_n^F$ and $\Psi_n^F$, $n\ge 0$, defined by
\begin{align}\label{defphch}
\Phi^F_n&=|c_{n+j-1}^a(f_i)|_{i,j=1}^k,\\\label{defpsch}
\Psi_n^F&=\begin{vmatrix}
c_n^a(f_1)&c_{n+1}^a(f_1)&\cdots &c_{n+k-2}^a(f_1)&c_{n+k}^a(f_1)\\
\vdots&\vdots&\ddots &\vdots\\
c_n^a(f_k)&c_{n+1}^a(f_k)&\cdots &c_{n+k-2}^a(f_k)&c_{n+k}^a(f_k) \end{vmatrix}.
\end{align}
Using the duality (\ref{Chdua}), we have
\begin{align}\label{duomph}
\Omega _F(n)&=\frac{\prod_{i=0}^{k-1}(n+i)!}{(-a)^{k(n-1)-u_F}\prod_{f\in F}f!}\Phi_n^F,\\ \label{duomps}
\Lambda _F(n)&=\frac{(n+k)!\prod_{i=0}^{k-2}(n+i)!}{(-a)^{k(n-1)-u_F+1}\prod_{f\in F}f!}\Psi_n^F.
\end{align}
According to Lemma \ref{sze}, as long as $\Phi_n^F\not =0$, $n\ge 0$, the  polynomials  $q_n^F$, $n\ge 0$, are orthogonal with respect to the measure
\begin{equation}\label{mraf}
\rho _{a}^{F}=\sum _{x=u_F}^\infty \prod_{f\in F}(x-f-u_F)\frac{a^{x-u_F}}{(x-u_F)!}\delta _x.
\end{equation}
Notice that the measure $\rho_{a}^{F}$ is supported in the infinite set of nonnegative integers  $\sigma_F$ (\ref{defsf}).

\begin{theorem}\label{th3.3} Let $F$ be a finite set of positive integers. Then the polynomials $c_n^F$, $n\in \sigma _F$, (\ref{defchex})
are common eigenfunctions of the second order difference operator
\begin{equation}\label{sodochex}
D_F=h_{-1}(x)\Sh_{-1}+h_0(x)\Sh_0+h_1(x)\Sh_{1},
\end{equation}
where
\begin{align}\label{jpm1}
h_{-1}(x)&=-x\frac{\Omega_F(x+1)}{\Omega_F(x)},\\\label{jpm2}
h_0(x)&=x+k+a+u_F-a\frac{\Lambda_F(x+1)}{\Omega_F(x+1)}+a\frac{\Lambda_F(x)}{\Omega_F(x)},\\\label{jpm3}
h_1(x)&=-a\frac{\Omega_F(x)}{\Omega_F(x+1)}.
\end{align}
Moreover $D_F(c_n^F)=nc_n^F$, $n\in \sigma _F$.
\end{theorem}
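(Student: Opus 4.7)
The strategy is to derive the eigenfunction identity $D_F(c_n^F)=n\,c_n^F$ from the three-term recurrence satisfied by the Christoffel-transformed orthogonal polynomials $q_n^F$ via the duality of Lemma \ref{lem3.2}. This is the standard mechanism by which duality exchanges a three-term recurrence in the index $n$ with a second-order difference operator in the variable $x$.

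First, the polynomials $q_n^F$ from (\ref{defqnch}) are orthogonal with respect to $\rho_a^F$ (\ref{mraf}) provided the set $\X_{\rho_a}^F=\{n:\Phi_n^F=0\}$ is finite. Identity (\ref{duomph}) shows that $\Phi_n^F$ is, up to a nonvanishing factor, the value at $n$ of the polynomial $\Omega_F$, whose degree is $u_F+k$; hence $\X_{\rho_a}^F$ is finite, Lemma \ref{sze} applies, and Lemma \ref{lemmc} provides, for $n>\x_{\rho_a}^F+1$, the three-term recurrence
$$
x\,q_n^F(x)=a_n^Q q_{n+1}^F(x)+b_n^Q q_n^F(x)+c_n^Q q_{n-1}^F(x),
$$
with coefficients explicitly given in terms of $\Phi_n^F$, $\Psi_n^F$, and the Charlier data $a_n^P=n+1$, $b_n^P=n+a$, $c_n^P=a$, $\lambda_n^P=1/n!$; in particular $a_n^Q=(n+k+1)\Phi_n^F/\Phi_{n+1}^F$ and $c_n^Q=a\,\Phi_{n+1}^F/\Phi_n^F$.

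Next, evaluate this recurrence at $x=v\in\sigma_F$ and substitute the duality $q_u^F(v)=\xi_u\zeta_v c_v^F(u)$. Cancelling $\zeta_v$ and dividing by $\xi_u$ yields, for every integer $u>\x_{\rho_a}^F+1$ and every $v\in\sigma_F$,
$$
v\,c_v^F(u)=\tfrac{\xi_{u+1}}{\xi_u}\,a_u^Q\,c_v^F(u+1)+b_u^Q\,c_v^F(u)+\tfrac{\xi_{u-1}}{\xi_u}\,c_u^Q\,c_v^F(u-1).
$$
This is the eigenvalue equation $\widetilde D_F(c_v^F)(u)=v\,c_v^F(u)$ for the difference operator $\widetilde D_F$ with rational coefficients $\widetilde h_1(x)=(\xi_{x+1}/\xi_x)\,a_x^Q$, $\widetilde h_0(x)=b_x^Q$, $\widetilde h_{-1}(x)=(\xi_{x-1}/\xi_x)\,c_x^Q$. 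Fixing $v=n\in\sigma_F$, both sides of $\widetilde D_F(c_n^F)(x)-n\,c_n^F(x)=0$ become, after multiplication by $\Omega_F(x)\Omega_F(x+1)$, polynomials in $x$ vanishing at infinitely many integers, so they are identically zero; thus the eigenfunction relation holds on the whole line.

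It remains only to check that $\widetilde h_j=h_j$ from (\ref{jpm1})--(\ref{jpm3}). Using Lemma \ref{lem3.2}, the elementary ratio $\xi_{u+1}/\xi_u=(-a)^{k+1}/[(u+1)\cdots(u+k+1)]$ (and its analogue for $\xi_{u-1}/\xi_u$), together with the telescope $\prod_{i=0}^{k-1}(x+i)!/\prod_{i=0}^{k-1}(x+1+i)!=x!/(x+k)!$, converts (\ref{duomph}) into $\Omega_F(x)/\Omega_F(x+1)=(-a)^k\Phi_x^F/[(x+1)\cdots(x+k)\,\Phi_{x+1}^F]$. Combined with $a_x^Q=(x+k+1)\Phi_x^F/\Phi_{x+1}^F$ this immediately yields $\widetilde h_1(x)=-a\,\Omega_F(x)/\Omega_F(x+1)$, which is (\ref{jpm3}); the same telescope handles $\widetilde h_{-1}$, producing (\ref{jpm1}). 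Finally (\ref{jpm2}) follows from the analogous reduction of $b_u^Q$, where (\ref{duomps}) is needed to express the $\Psi$-terms via $\Lambda_F$. The main obstacle is precisely this last coefficient identification: conceptually trivial but requiring careful bookkeeping of signs, powers of $-a$, and factorial quotients.
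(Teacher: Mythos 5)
Your proposal follows essentially the same route as the paper's proof: the Christoffel-transform recurrence of Lemma \ref{lemmc} applied to $q_n^F$, converted through the duality of Lemma \ref{lem3.2} together with (\ref{duomph}) and (\ref{duomps}) into a second order difference equation in $x$, with the coefficient identification giving (\ref{jpm1})--(\ref{jpm3}) and the extension from large integer $u$ to all $u$ by rationality of the coefficients. One small detail to fix in your bookkeeping: the relevant $p_n$ in the Christoffel setup is the shifted Charlier polynomial $p_n(x)=c_n^a(x-u_F)$, so $b_n^P=n+a+u_F$ rather than $n+a$; this shift is exactly where the $+u_F$ term in (\ref{jpm2}) comes from.
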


\begin{proof}
Consider the set $\X_a^F$ of nonnegative integers defined by $\X_a^F=\{n\in \NN: \Phi_n^F=0\}$. Using (\ref{duomph}), we get $\X_a^F=\{x\in \NN: \Omega_F(x)=0\}$. Since $\Omega_F$ is a polynomial in $x$,
we conclude that $\X_a^F$ is finite. Define then $\x_a^F=\max \X_a^F$, with the convention that if $\X_a^F=\emptyset$ then $\x_a^F=-1$.

Write $p_n(x)=c_n^a(x-u_F)$ and $q_n(x)=q_n^F(x)$ (see (\ref{defqnch})). With the notation of Section \ref{secChr}, we have
$$
\lambda^P_n=\frac{1}{n!},\quad \lambda^Q_n=\frac{(-1)^k\Phi_n^F}{(n+k)!}.
$$
Using the three term recurrence relations (\ref{Chttrr}) for the Charlier polynomials and (\ref{rrvqn}) for $q_n$, $n>\x_a^F+1$,  we conclude after an easy calculation that for $u>\x_a^F+1$ and $v\in \RR $
\begin{equation}\label{yttr}
vq_u^F(v)=a_u^Qq_{u+1}^F(v)+b_u^Qq_u^F(v)+c_u^Qq_{u-1}^F(v),
\end{equation}
where
\begin{align}\label{anqch}
a_n^Q&=(n+k+1)\frac{\Phi _n^F}{\Phi_{n+1}^F},\\\label{bnqch}
b_n^Q&=(n+k+a+u_F)+(n+k+1)\frac{\Psi _{n+1}^F}{\Phi_{n+1}^F}-(n+k)\frac{\Psi _{n}^F}{\Phi_{n}^F},\\\label{cnqch}
c_n^Q&=a\frac{\Phi _{n+1}^F}{\Phi_{n}^F}.
\end{align}
Assume now that $v\in \sigma_F$. Then, using the dualities (\ref{duaqnrn}), (\ref{duomph}) and (\ref{duomps}), we get from (\ref{yttr}) after straightforward calculations
\begin{equation}\label{edho}
uc_v^F(u)=h_1(u)c_{v}^F(u+1)+h_0(u)c_v^F(u)+h_{-1}(u)c_v^F(u-1),
\end{equation}
for all nonnegative integers $u>\x_a^F+1$, where $h_1$, $h_0$ and $h_{-1}$ are given by (\ref{jpm1}), (\ref{jpm2}) and (\ref{jpm3}), respectively. Since $c_v^F$, $v\in \sigma_F$, are polynomials and $h_1,h_0$ and $h_{-1}$ are rational functions, we have that (\ref{edho}) holds also for all complex number $u$. In other words, the  polynomials $c_n^F$, $n\in \sigma_F$, are eigenfunctions
of the second order difference operator $D_F$ (\ref{sodochex}).

\end{proof}

The determinant which defines $\Omega_F^a$ (\ref{defom}) enjoys a very nice invariant property with respect to the involution $I$ defined by (\ref{dinv}). Indeed,
for a finite set $F=\{f_1,\cdots , f_k\}$ of positive integers, consider the involuted set $I(F)=G=\{ g_1,\cdots, g_m\}$ with $g_i<g_{i+1}$. We also need the associated functions $\tilde \Omega _F^a$ and $\tilde \Lambda _F^a$ defined by
\begin{align}\label{defomt}
\tilde \Omega _F^a(x)&=|c_{g_i}^{-a}(-x+j-1)|_{i,j=1}^m,\\\label{delamt}
\tilde \Lambda _F^a(x)&=\begin{vmatrix}
c_{g_1}^{-a}(-x)&c_{g_1}^{-a}(-x+1)&\cdots &c_{g_1}^{-a}(-x+m-2)&c_{g_1}^{-a}(-x+m)\\
\vdots&\vdots&\ddots &\vdots\\
c_{g_m}^{-a}(-x)&c_{g_m}^{-a}(-x+1)&\cdots &c_{g_m}^{-a}(-x+m-2)&c_{g_m}^{-a}(-x+m) \end{vmatrix} .
\end{align}
Using the definition of the involution $I$, we have that both $\tilde \Omega _F^a$ and $\tilde \Lambda _F^a$ are polynomials of degree $u_F+k$ (this last can be deduced using Lemma 3.4 of \cite{DdI}).

The invariant property mentioned above for  $\Omega_F^a$ (\ref{defom}) is the following: except for a sign, $\Omega_F^a$ remains invariant if we change $F$ to $G=I(F)$, $x$ to $-x$ and $a$ to $-a$. In other words, except for a sign, $\Omega_F^a$ and $\tilde \Omega_F^a$ are equal:
\begin{equation}\label{iza}
\Omega_F^a(x)=(-1)^{k+u_F}\tilde \Omega_F^a (x).
\end{equation}
For finite sets $F$ formed by consecutive positive integers this invariance was conjecture in \cite{du2} and proved in \cite{du3}. The proof for all finite set of positive integers will be included in \cite{CD}.

According to this invariant property, we can rewrite as follows the second order difference operator $D_F$ for which the polynomials $c_n^F$, $n\in \sigma _F$, are common eigenfunctions.

\begin{theorem}\label{th3.6}
Let $F$ be a finite set of positive integers. Then  the coefficients $h_{-1}$, $h_0$, $h_1$ of the operator $D_F$ (\ref{sodochex}) can be rewritten in the form
\begin{align}\label{jm1}
h_{-1}(x)&=-x\frac{\tilde \Omega_F(x+1)}{\tilde \Omega_F(x)},\\\label{jm2}
h_0(x)&=x+m+a+u_G+a\frac{\tilde \Lambda_F(x+1)}{\tilde \Omega_F(x+1)}-a\frac{\tilde \Lambda_F(x)}{\tilde \Omega_F(x)},\\\label{jm3}
h_1(x)&=-a\frac{\tilde \Omega_F(x)}{\tilde \Omega_F(x+1)}.
\end{align}
\end{theorem}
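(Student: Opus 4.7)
The plan is to derive Theorem \ref{th3.6} as a direct consequence of the invariance identity (\ref{iza}) together with an analog of the auxiliary formula (\ref{relomla}) for the polynomial $\tilde\Omega_F$. Since (\ref{iza}) says that $\Omega_F^a$ and $\tilde\Omega_F^a$ differ only by the global sign $(-1)^{k+u_F}$, the sign cancels in every ratio of the form $\Omega_F(x+1)/\Omega_F(x)$ or $\Omega_F(x)/\Omega_F(x+1)$. So the formulas (\ref{jm1}) and (\ref{jm3}) for $h_{-1}$ and $h_1$ follow at once from (\ref{jpm1}) and (\ref{jpm3}). The entire content of the theorem is thus the re-expression of $h_0$.

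For $h_0$ I would first establish the companion of (\ref{relomla})
\begin{equation*}
\tilde\Lambda_F^a(x)=m\tilde\Omega_F^a(x)+\frac{d}{da}\tilde\Omega_F^a(x).
\end{equation*}
This is obtained exactly as (\ref{relomla}) was obtained for $\Omega_F^a$: using $(d/da)c_n^a=-c_{n-1}^a$ (second identity in (\ref{Chlad})) together with the chain rule, one gets $(d/da)c_{g_i}^{-a}(y)=c_{g_i-1}^{-a}(y)$, and then $c_{g_i-1}^{-a}(-x+j-1)=c_{g_i}^{-a}(-x+j)-c_{g_i}^{-a}(-x+j-1)$ by the first identity in (\ref{Chlad}). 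Differentiating $\tilde\Omega_F^a(x)$ column-by-column and performing the obvious column cancellations (all replacements except in the last column produce two equal columns) yields the displayed formula.

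Combining (\ref{iza}) with (\ref{relomla}) and the displayed identity gives
\begin{equation*}
\frac{\Lambda_F(x)}{\Omega_F(x)}=k-\frac{(d/da)\tilde\Omega_F(x)}{\tilde\Omega_F(x)}=k+m-\frac{\tilde\Lambda_F(x)}{\tilde\Omega_F(x)}.
\end{equation*}
Substituting this into (\ref{jpm2}) turns $-a\Lambda_F(x+1)/\Omega_F(x+1)+a\Lambda_F(x)/\Omega_F(x)$ into $+a\tilde\Lambda_F(x+1)/\tilde\Omega_F(x+1)-a\tilde\Lambda_F(x)/\tilde\Omega_F(x)$, the $\pm a(k+m)$ terms cancelling.

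It remains to verify the identity of the constant terms, $x+k+a+u_F=x+m+a+u_G$, i.e.\ $u_F+k=u_G+m$. Using $g_m=f_k$ and $m=f_k-k+1$, together with $\sum_{g\in G}g=\binom{f_k+1}{2}-kf_k+\sum_{f\in F}f$ (from the definition $G=\{1,\dots,f_k\}\setminus\{f_k-f:f\in F\}$) and the definitions (\ref{defuf}) of $u_F$ and $u_G$, an elementary arithmetic check gives $u_F+k=\sum_{f\in F}f-\binom{k}{2}=u_G+m$. The main (minor) obstacle is the bookkeeping in deriving the $\tilde\Lambda_F/\tilde\Omega_F$ analog of (\ref{relomla}), because one has to keep straight both the sign induced by the chain rule $d/da=-d/d(-a)$ and the orientation of the columns $-x,-x+1,\dots,-x+m-2,-x+m$; everything else is formal substitution.
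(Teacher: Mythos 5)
Your proposal is correct and follows essentially the same route as the paper, whose proof consists precisely of combining the invariance (\ref{iza}) with (\ref{relomla}); your companion identity $\tilde\Lambda_F^a=m\tilde\Omega_F^a+\tfrac{d}{da}\tilde\Omega_F^a$ is exactly what one gets by applying (\ref{relomla}) to the involuted data $G$, $-a$, $-x$, and your arithmetic check $u_F+k=u_G+m$ is also right (the paper uses this same identity in Section 5). In short, you have simply spelled out the details behind the paper's one-line ``straightforwardly.''
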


\begin{proof}
Using (\ref{iza}) and (\ref{relomla}), we straightforwardly get (\ref{jm1}), (\ref{jm2}) and (\ref{jm3})
from (\ref{jpm1}), (\ref{jpm2}) and (\ref{jpm3}).
\end{proof}

\bigskip

We next show that the polynomials $c_n^F$, $n\in \sigma_F$, (\ref{defchex}) and the corresponding difference operator $D_F$ (\ref{sodochex}) can be constructed by applying a sequence of at most $k$ Darboux transform to the Charlier system (where $k$ is the number of elements of $F$).

\begin{definition}\label{dxt}
Given a system $(T,(\phi_n)_n)$ formed by a second order difference operator $T$ and a sequence $(\phi_n)_n$ of eigenfunctions for $T$, $T(\phi_n)=\pi_n\phi_n$, by a Darboux transform of the system $(T,(\phi_n)_n)$ we
mean the following. For a real number $\lambda$, we factorize $T-\lambda Id$ as the product of two first order difference operators $T=BA+\lambda Id$ ($Id$ denotes the identity operator). We then produce a new system consisting in the operator $\hat T$, obtained by reversing the order of the factors,
$\hat T = AB+\lambda Id$, and the sequence of eigenfunctions $\hat \phi_n =A(\phi_n)$: $\hat T(\hat \phi_n)=\pi_n\hat\phi_n$.
We say that the system $(\hat T,(\hat\phi_n)_n)$ has been obtained by applying a Darboux transformation with parameter $\lambda$ to
 the system $(T,(\phi_n)_n)$.
\end{definition}

\begin{lemma}\label{lfe} Let $F=\{f_1,\cdots ,f_k\}$ be a finite set of positive integers and write $F_{\{ k\} }=\{f_1,\cdots ,f_{k-1}\}$ (see (\ref{deff0})). We define the first order difference operators $A_F$ and $B_F$ as
\begin{align}
A_F&=\frac{\Omega _F(x+1)}{\Omega_{F_{\{ k\} }}(x+1)}\Sh_0-\frac{\Omega _F(x)}{\Omega_{F_{\{ k\} }}(x+1)}\Sh_1,\\
B_F&=-x\frac{\Omega _{F_{\{ k\} }}(x+1)}{\Omega_{F}(x)}\Sh_ {-1}+a\frac{\Omega _{F_{\{ k\} }}(x)}{\Omega_{F}(x)}\Sh_0.
\end{align}
Then $c_n^F(x)=A_F(c_{n-f_k+k}^{F_{\{ k\} }})(x)$, $n\in
\sigma_F$. Moreover
\begin{align*}
D_{F_{\{ k\} }}&=B_FA_F+(f_k+u_{F_{\{ k\} }})Id,\\
D_{F}&=A_FB_F+(f_k+u_F)Id.
\end{align*}
In other words, the system
$(D_F,(c_n^F)_{n\in \sigma _F})$ can be obtained by applying a
Darboux transform to the system $(D_{F_{\{ k\} }},(c_n^{F_{\{ k\}
}})_{n\in \sigma _{F_{\{ k\} }}})$.
\end{lemma}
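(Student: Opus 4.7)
The plan is to reduce everything to one application of Sylvester's determinant identity (Lemma \ref{lemS}) plus a routine first-order-operator computation via Lemma \ref{lemdes}. Concretely, I first establish the determinantal identity $c_n^F(x)=A_F(c_{n-f_k+k}^{F_{\{k\}}})(x)$, which is equivalent to
\begin{equation*}
\Omega_{F_{\{k\}}}(x+1)\, c_n^F(x)=\Omega_F(x+1)\, c_{n-f_k+k}^{F_{\{k\}}}(x)-\Omega_F(x)\, c_{n-f_k+k}^{F_{\{k\}}}(x+1),
\end{equation*}
and then read off both factorizations.

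For the determinantal identity, I would apply Sylvester's identity to the $(k+1)\times(k+1)$ matrix $M$ whose first row is $(c_{n-u_F}^a(x+j-1))_{j=1}^{k+1}$ and whose rows $2,\dots,k+1$ are $(c_{f_i}^a(x+j-1))_{j=1}^{k+1}$, with the choice $i_0=1,\ i_1=k+1,\ j_0=1,\ j_1=k+1$. A direct check identifies the six minors: $\det M=c_n^F(x)$, $\det M_1^1=\Omega_F(x+1)$, $\det M_1^{k+1}=\Omega_F(x)$, $\det M_{k+1}^{k+1}=c_{n-f_k+k}^{F_{\{k\}}}(x)$, $\det M_{k+1}^1=c_{n-f_k+k}^{F_{\{k\}}}(x+1)$, and $\det M_{1,k+1}^{1,k+1}=\Omega_{F_{\{k\}}}(x+1)$. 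The identification of the two ``displaced'' minors uses the arithmetic $u_F=u_{F_{\{k\}}}+f_k-k$, so that $n-u_F=(n-f_k+k)-u_{F_{\{k\}}}$, and the shift in column-block for $\Omega_F(x+1)$ versus $\Omega_F(x)$ accounts for the $\pm$ signs. Dividing Sylvester's identity by $\Omega_{F_{\{k\}}}(x+1)$ produces exactly $A_F(c_{n-f_k+k}^{F_{\{k\}}})$.

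For the first factorization, Lemma \ref{lemdes} gives $B_FA_F$ as a second-order difference operator whose $\Sh_{-1}$ and $\Sh_1$ coefficients telescope to $-x\,\Omega_{F_{\{k\}}}(x+1)/\Omega_{F_{\{k\}}}(x)$ and $-a\,\Omega_{F_{\{k\}}}(x)/\Omega_{F_{\{k\}}}(x+1)$, respectively, matching the $h_{-1}$ and $h_1$ of $D_{F_{\{k\}}}$ given by Theorem \ref{th3.3} applied to $F_{\{k\}}$. Hence $D_{F_{\{k\}}}-B_FA_F-(f_k+u_{F_{\{k\}}})\mathrm{Id}$ is multiplication by a single rational function $g(x)$; evaluating on any nonzero $c_m^{F_{\{k\}}}$, whose image under $D_{F_{\{k\}}}$ is $m\,c_m^{F_{\{k\}}}$, forces $g\equiv 0$ (invoking Lemma \ref{lemigop} with three such polynomials of distinct positive degrees if one prefers to avoid talking about rational functions directly).

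For the second factorization, the identity of the first step gives $A_F(c_m^{F_{\{k\}}})=c_{m+f_k-k}^F$ for $m\in\sigma_{F_{\{k\}}}$, so for any $n\in\sigma_F$, setting $m=n-f_k+k$,
\begin{equation*}
A_FB_F(c_n^F)=A_F\!\left(B_FA_F(c_m^{F_{\{k\}}})\right)=(m-f_k-u_{F_{\{k\}}})\,A_F(c_m^{F_{\{k\}}})=(n-f_k-u_F)\,c_n^F,
\end{equation*}
where the last equality uses $u_F=u_{F_{\{k\}}}+f_k-k$. Thus $A_FB_F+(f_k+u_F)\mathrm{Id}$ sends every $c_n^F$ to $n\,c_n^F$, exactly as $D_F$ does by Theorem \ref{th3.3}; since a second Lemma \ref{lemdes} computation shows its $\Sh_{\pm 1}$ coefficients already coincide with those of $D_F$, the two operators differ only in the $\Sh_0$ slot, a discrepancy ruled out by the eigenvalue equation. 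The Darboux interpretation is then immediate from Definition \ref{dxt} with $\lambda=f_k+u_{F_{\{k\}}}$. The main obstacle is purely bookkeeping in Step 1: correctly matching the six minors that Sylvester produces with $\Omega_F$, $\Omega_{F_{\{k\}}}$, $c_n^F$, and $c_{n-f_k+k}^{F_{\{k\}}}$ using the shift $u_F-u_{F_{\{k\}}}=f_k-k$; once that is done, everything else is routine.
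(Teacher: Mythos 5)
Your Step 1 (Sylvester with $i_0=j_0=1$, $i_1=j_1=k+1$, using $u_F=u_{F_{\{k\}}}+f_k-k$) and your treatment of the second factorization follow the paper's route and are fine. The problem is the first factorization, $D_{F_{\{k\}}}=B_FA_F+(f_k+u_{F_{\{k\}}})Id$. Matching the $\Sh_{\pm1}$ coefficients is correct, but it only reduces the claim to the zeroth-order identity
\begin{equation*}
h_0^{F_{\{k\}}}(x)-(f_k+u_{F_{\{k\}}})
=-h_{-1}^{F_{\{k\}}}(x)\frac{\Omega_F(x-1)}{\Omega_F(x)}
-h_{1}^{F_{\{k\}}}(x)\frac{\Omega_F(x+1)}{\Omega_F(x)},
\end{equation*}
i.e.\ $D_{F_{\{k\}}}(\Omega_F)=(f_k+u_{F_{\{k\}}})\Omega_F$, and your argument for killing the remaining discrepancy $g$ does not establish it. You say that evaluating $D_{F_{\{k\}}}-B_FA_F-(f_k+u_{F_{\{k\}}})Id$ on ``any nonzero $c_m^{F_{\{k\}}}$'' forces $g\equiv 0$, but to conclude $g\,c_m^{F_{\{k\}}}=0$ you would need to know that $B_FA_F(c_m^{F_{\{k\}}})=(m-f_k-u_{F_{\{k\}}})c_m^{F_{\{k\}}}$, which is exactly (a consequence of) the factorization being proved. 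Step 1 only tells you $A_F(c_m^{F_{\{k\}}})=c^F_{m+f_k-k}$; the action of $B_F$ on the polynomials $c^F_n$ is unknown at this stage -- indeed, in your own scheme it is the second factorization, which you derive \emph{from} the first. The same objection applies to the parenthetical appeal to Lemma \ref{lemigop}: you cannot assert that the two operators agree on three polynomials without already knowing how $B_FA_F$ acts on them. So as written the argument is circular.

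The missing ingredient is the paper's identity (\ref{rrom0}), $\Omega_F(x)=(-1)^{k-1}c^{F_{\{k\}}}_{f_k+u_{F_{\{k\}}}}(x)$, which exhibits $\Omega_F$ itself as an eigenfunction of $D_{F_{\{k\}}}$ with eigenvalue $f_k+u_{F_{\{k\}}}$ (note $f_k+u_{F_{\{k\}}}\in\sigma_{F_{\{k\}}}$ since $f_k\notin F_{\{k\}}$); this is precisely what proves the displayed zeroth-order identity, equivalently (\ref{alqr}). Your evaluation argument can be repaired by choosing specifically $m=f_k+u_{F_{\{k\}}}$ and observing that $c_m^{F_{\{k\}}}=\pm\Omega_F$ lies in the kernel of $A_F$ (so $B_FA_F$ annihilates it, and the discrepancy applied to it is $m\,c_m^{F_{\{k\}}}-m\,c_m^{F_{\{k\}}}=0$), but that identification of $\Omega_F$ with a specific eigenpolynomial is the key step of the paper's proof and is absent from yours.
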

\begin{proof}
First of all, we point out that $\sigma_F=f_k-k+\sigma_{F_{\{ k\} }}$ (that is an easy consequence of (\ref{defuf}) and (\ref{defsf})).
In particular $u_F=u_{F_{\{ k\} }}+f_k-k$.

If we apply Sylvester's identity with $i_0=j_0=1$, $i_1=j_1=k$ (see Lemma \ref{lemS}) to the determinant (\ref{defchex}), we get
\begin{align*}
c_n^F(x)&=\frac{\Omega _F(x+1)}{\Omega_{F_{\{ k\} }}(x+1)}c_{n-f_k+k}^{F_{\{ k\} }}(x)-\frac{\Omega _F(x)}{\Omega_{F_{\{ k\} }}(x+1)}c_{n-f_k+k}^{F_{\{ k\} }}(x+1)\\
&=A_F(c_{n-f_k+k}^{F_{\{ k\} }})(x).
\end{align*}
Write now $D_{F_{\{ k\} }}=h_{-1}^{F_{\{ k\} }}\Sh_{-1}+h_0^{F_{\{ k\} }}\Sh_0+h_1^{F_{\{ k\} }}\Sh_1$. Using Lemma \ref{lemdes}, the factorization $D_{F_{\{ k\} }}=B_FA_F-(f_k+u_{F_{\{ k\} }})Id$ will follow if we prove
$$
h_0^{F_{\{ k\} }}(x)-(f_k+u_{F_{\{ k\} }})=-h_{-1}^{F_{\{ k\} }}(x)\frac{\Omega_F(x-1)}{\Omega_F(x)}-h_{1}^{F_{\{ k\} }}(x)\frac{\Omega_F(x+1)}{\Omega_F(x)}.
$$
This can be rewritten as
\begin{equation}\label{alqr}
D_{F_{\{ k\} }}(\Omega_F)=(f_k+u_{F_{\{ k\} }})\Omega _F.
\end{equation}
But this is a consequence of the identity $\Omega
_F(x)=(-1)^{k-1}c_{f_k+u_{F_{\{ k\} }}}^{F_{\{ k\} }}(x)$
(\ref{rrom0}).

We finally prove the factorization $D_{F}=A_FB_F-f_kId$. Since
$D_F(c_n^F)=nc_n^F$, $n\in \sigma_F$, using Lemma \ref{lemigop},
it will be enough to prove that $A_FB_F(c_n^F)=(n-f_k-u_F)c_n^F$,
$n\in \sigma_F$:
\begin{align*}
A_FB_F(c_n^F)&=A_FB_FA_F(c_{n-f_k+k}^{F_{\{ k\} }})=A_F[D^{F_{\{ k\} }}-(f_k+u_{F_{\{ k\} }})Id](c_{n-f_k+k}^{F_{\{ k\} }})\\
&=A_F[(n-f_k-u_F)(c_{n-f_k+k}^{F_{\{ k\} }})]=(n-f_k-u_F)c_n^F.
\end{align*}

\end{proof}

Analogous factorization can be obtained by using any of the sets $F_{\{i\} }$, $1\le i<k$ (see (\ref{deff0})) instead of $F_{\{k\} }$.

\bigskip

When the determinants $\Omega _F (n)\not =0$ (\ref{defom}), $n\ge 0$ (or equivalently, $\Phi_n^F\not =0$ (\ref{defphch}), $n\ge 0$),
the following alternative construction of the polynomial $q_n^F$ (\ref{defqnch}) has been given in \cite{DdI}.
For a finite set $F=\{f_1,\cdots , f_k\}$ of positive integers, consider the involuted set $I(F)=G=\{ g_1,\cdots, g_m\}$ with $g_i<g_{i+1}$, where the involution $I$ is defined by (\ref{dinv}). Assuming that $\Omega _F (n)\not =0$, $n\ge 0$, using the invariance (\ref{iza}) and Theorem 1.1 of \cite{DdI}, we have
\begin{equation}\label{quschi}
q_n^F(x)=\alpha_n\begin{vmatrix}
c^a_n(x-v_F) & -c^a_{n-1}(x-v_F) & \cdots & (-1)^mc^a_{n-m}(x-v_F) \\
c^{-a}_{g_1}(-n-1) & c^{-a}_{g_1}(-n) & \cdots &
c^{-a}_{g_1}(-n+m-1) \\
               \vdots & \vdots & \ddots & \vdots \\
               c^{-a}_{g_m}(-n-1) & \displaystyle
               c^{-a}_{g_m}(-n) & \cdots &c^{-a}_{g_m}(-n+m-1)
             \end{vmatrix},
\end{equation}
where $\alpha_n$, $n\ge 0$, is the normalization constant
$$
\alpha_n=(-1)^{k(n+1)}\frac{a^{k(n-1)-u_F}\prod_{f\in F}f!}{\prod_{i=1}^k(n+i)!}.
$$
The duality (\ref{duaqnrn}) then provides an alternative definition of the polynomial $c_n^F$, $n\ge v_F$. Indeed, after an easy calculation, we
conclude that
\begin{equation}\label{quschi2}
c_n^F(x)=\beta_n\begin{vmatrix}
c^a_{n-v_F}(x) & \frac{x}{a}c^a_{n-v_F}(x-1) & \cdots & \frac{(x-m+1)_m}{a^m}c^a_{n-v_F}(x-m+1) \\
c^{-a}_{g_1}(-x-1) & c^{-a}_{g_1}(-x) & \cdots &
c^{-a}_{g_1}(-x+m-1) \\
               \vdots & \vdots & \ddots & \vdots \\
               c^{-a}_{g_m}(-x-1) & \displaystyle
               c^{-a}_{g_m}(-x) & \cdots &c^{-a}_{g_m}(-x+m-1)
             \end{vmatrix},
\end{equation}
where $\beta_n$, $n\ge 0$, is the normalization constant
\begin{equation}\label{nc1}
\beta_n=(-1)^{m+k+u_F}\frac{a^m(n-v_F)!V_F\prod_{g\in G}g!\prod_{i=1}^k(f_i-n+u_F)}{(n-u_F)!V_G\prod_{f\in F}f!}.
\end{equation}
When the cardinality of the involuted set $G=I(F)$ is less than the cardinality of $F$, (\ref{quschi2}) will provide a more efficient way than (\ref{defchex}) for an explicit computation of the polynomials $c_n^F$, $n\ge v_F$. For instance, take $F=\{1,\cdots, k\}$.
Since $I(F)=\{ k\}$, the determinant in (\ref{quschi2}) has order $2$ while the determinant in (\ref{defchex}) has order $k+1$.

Applying Sylvester's identity to the determinant (\ref{quschi2}), we get an alternative way to construct the system $(D_F,c_n^F)$ by applying a sequence of at most $m$ Darboux transform to the Charlier system.

\begin{lemma}\label{thjod}
Given a real number $a\not =0$ and a finite set $F$of positive integers for which  $\Omega _F^a (n)\not =0$, $n\ge 0$, define the first order difference operators $C_F$ and $E_F$ as
\begin{align}\label{defoc}
C_F&=-\frac{x\tilde \Omega _F(x+1)}{a\tilde\Omega_{F_{\Downarrow }}(x)}\Sh_ {-1}+\frac{\tilde \Omega _F(x)}{\tilde \Omega_{F_{\Downarrow }}(x)}\Sh_0,\\\label{defoe}
E_F&=a\frac{\tilde \Omega _{F_{\Downarrow }}(x+1)}{\tilde \Omega_{F}(x+1)}\Sh_0-a\frac{\tilde \Omega_{F_{\Downarrow }}(x)}{\tilde \Omega _{F}(x+1)}\Sh_1,
\end{align}
where $F_{\Downarrow }$ is the finite set of positive integers defined by (\ref{deff1}).
Then $D_{F_{\Downarrow }}=E_FC_F+(u_{F}-k-1)Id$ and $D_{F}=C_FE_F+u_FId$. Moreover
\begin{equation}\label{spmdv}
C_F(c_{n-k-1}^{F_{\Downarrow }})=(-1)^{u_F+u_{F_{\Downarrow }}+1}\frac{n-u_F}{a}c_n^F(x),\quad n\ge v_F,
\end{equation}
where $n_{F_{\Downarrow }}$ is the number of elements of $F_{\Downarrow }$.
\end{lemma}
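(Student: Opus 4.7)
The argument parallels Lemma \ref{lfe} but uses the alternative determinantal representation (\ref{quschi2}) instead of (\ref{defchex}). The key combinatorial input is (\ref{ref0f1}): $F_\Downarrow = I(G_{\{m\}})$, so the involuted set of $F_\Downarrow$ is obtained from $G$ by deleting $g_m$. Consequently the $m\times m$ determinant giving $c_{n-k-1}^{F_\Downarrow}$ via the analogue of (\ref{quschi2}) for $F_\Downarrow$ is built from the first $m-1$ of the $G$-rows of the matrix $M_F$ in (\ref{quschi2}); applying the invariance (\ref{iza}) to $F_\Downarrow$, the polynomial $\tilde\Omega_{F_\Downarrow}$ likewise appears as a minor of $M_F$ (after a shift of $x$).

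I would then apply Sylvester's identity (Lemma \ref{lemS}) to $M_F$, deleting the pair of rows and columns that isolate, on one side, the product of $\tilde\Omega_F$ with $c_{n-k-1}^{F_\Downarrow}$ at consecutive values of the argument, and on the other side, the product of $c_n^F$ with $\tilde\Omega_{F_\Downarrow}$. After tracking the normalization constant $\beta_n$ and the signs coming from (\ref{iza}) applied to both $F$ and $F_\Downarrow$, this yields exactly the intertwining (\ref{spmdv}), i.e., $C_F(c_{n-k-1}^{F_\Downarrow})$ is the claimed scalar multiple of $c_n^F$. A second application of Sylvester's identity, with a different choice of deleted rows and columns (in the spirit of the second half of Lemma \ref{lfe}), yields the dual intertwining $E_F(c_n^F) = (-1)^{u_F + u_{F_\Downarrow} + 1}\,a\, c_{n-k-1}^{F_\Downarrow}$.

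With both intertwiners in hand, the two factorizations follow by the eigenvalue argument of Lemma \ref{lfe}. Composing them gives $C_F E_F(c_n^F) = (n - u_F)\, c_n^F$ for all $n \in \sigma_F$ with $n \geq v_F$; since $D_F(c_n^F) = n\, c_n^F$ by Theorem \ref{th3.3}, the operators $D_F$ and $C_F E_F + u_F\, Id$ agree on infinitely many $c_n^F$, and in particular on three eigenfunctions of pairwise distinct positive degrees, so Lemma \ref{lemigop} forces $D_F = C_F E_F + u_F\, Id$. The factorization $D_{F_\Downarrow} = E_F C_F + (u_F - k - 1)\, Id$ is obtained symmetrically from $E_F C_F(c_{n-k-1}^{F_\Downarrow}) = (n - u_F)\, c_{n-k-1}^{F_\Downarrow}$, which matches $D_{F_\Downarrow}(c_{n-k-1}^{F_\Downarrow}) - (u_F - k - 1) c_{n-k-1}^{F_\Downarrow} = (n-k-1 - u_F + k + 1) c_{n-k-1}^{F_\Downarrow}$. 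The main obstacle throughout is bookkeeping rather than any new idea: tracking the signs produced by (\ref{iza}) applied separately to $F$ (with $|I(F)| = m$) and to $F_\Downarrow$ (with $|I(F_\Downarrow)| = m - 1$), the constants $\beta_n$ from (\ref{quschi2}), and the signs from the expansions in Sylvester's identity — the sign $(-1)^{u_F + u_{F_\Downarrow} + 1}$ in (\ref{spmdv}) being exactly the combined parity of these contributions.
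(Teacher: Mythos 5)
Your treatment of the intertwining (\ref{spmdv}) matches the paper: Sylvester's identity (Lemma \ref{lemS}) applied to the $(m+1)\times(m+1)$ determinant in (\ref{quschi2}), together with (\ref{ref0f1}), which identifies the relevant minors as $\tilde\Omega_F$, $\tilde\Omega_{F_\Downarrow}$ and $c^{F_\Downarrow}_{n-k-1}$ at shifted arguments; and your derivation of $D_F=C_FE_F+u_F\,Id$ from the intertwining plus Lemma \ref{lemigop} is exactly the argument the paper invokes ("as in Lemma \ref{lfe}"). The genuine gap is the step you lean on to get the \emph{other} factorization: the claim that ``a second application of Sylvester's identity, with a different choice of deleted rows and columns, yields the dual intertwining $E_F(c_n^F)=(-1)^{u_F+u_{F_\Downarrow}+1}a\,c^{F_\Downarrow}_{n-k-1}$.'' Sylvester's identity applied to the matrix of (\ref{quschi2}) always produces a relation in which the big determinant $c_n^F$ appears exactly once, at a single argument, multiplied by an $(m-1)\times(m-1)$ minor, and equal to a two-term combination of $m\times m$ minors; whatever rows and columns you delete, you only get variants of the first intertwining. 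The dual relation you need, $\tilde\Omega_{F_\Downarrow}(x+1)c_n^F(x)-\tilde\Omega_{F_\Downarrow}(x)c_n^F(x+1)=\mathrm{const}\cdot\tilde\Omega_F(x+1)c^{F_\Downarrow}_{n-k-1}(x)$, involves the big determinant at two different arguments, i.e.\ determinants of two distinct matrices, so it is not of Sylvester type for any single matrix (it is a Pl\"ucker-type relation requiring a separate argument). In fact the dual intertwining is most naturally a \emph{consequence} of the factorization $D_{F_\Downarrow}=E_FC_F+(u_F-k-1)Id$ together with (\ref{spmdv}), so using it as an input to prove that factorization is circular unless you supply an independent proof, which your plan does not.

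The paper closes exactly this hole differently: the factorization $D_{F_\Downarrow}=E_FC_F+(u_F-k-1)Id$ is proved directly by the coefficient criterion of Lemma \ref{lemdes}. Writing the coefficients of $D_{F_\Downarrow}$ in the $\tilde\Omega$-form of Theorem \ref{th3.6}, the off-diagonal conditions are automatic, and the remaining condition on the middle coefficient is an identity in $\tilde\Omega_F,\tilde\Omega_{F_\Downarrow}$ which, after the substitution $a\to-a$, $x\to-x-1$ and the invariance (\ref{iza}), becomes the eigenvalue equation $D_{G_{\{m\}}}(\Omega_G)=(g_m+u_{G_{\{m\}}})\Omega_G$; this holds because $\Omega_G=(-1)^{m-1}c^{G_{\{m\}}}_{g_m+u_{G_{\{m\}}}}$ by (\ref{rrom0}). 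To repair your proposal, replace the unsupported ``second Sylvester'' step by this direct verification (or by some other independent proof of one of the two factorizations); the rest of your outline then goes through.
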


\begin{proof}

Write $D_{F_{\Downarrow }}=h_{-1}^{F_{\Downarrow }}\Sh_{-1}+h_0^{F_{\Downarrow }}\Sh_0+h_1^{F_{\Downarrow }}\Sh_1$. Using Lemma \ref{lemdes}, the factorization $D_{F_{\Downarrow }}=E_FC_F-(f_k+u_{F_0})Id$ will follow if we prove
\begin{equation}\label{alqr2}
h_0^{F_{\Downarrow }}(x)-(u_{F}-k-1))=-\frac{a}{x}h_{-1}^{F_{\Downarrow }}(x)\frac{\tilde \Omega_F(x)}{\tilde \Omega_F(x+1)}-\frac{x+1}{a}h_{1}^{F_{\Downarrow }}(x)\frac{\tilde \Omega_F(x+2)}{\tilde \Omega_F(x+1)}.
\end{equation}
If we set $a\to -a$, $x\to -x-1$ and use the invariant property of $\Omega$ (\ref{iza}), this can be rewritten as
$$
D_{G_{\{ m\}}}(\Omega_G)=(g_m+u_{G_{\{ m\}}})\Omega _G,
$$
where $G_{\{ m\} }$ is the finite set of positive integers defined by (\ref{deff0}).
(\ref{alqr2}) then follows by taking into account that
$\Omega _G(x)=(-1)^{m-1}c_{g_m+u_{G_{\{ m\}}}}^{G_{\{ m\}}}(x)$ (\ref{rrom0}).

For $n\ge v_F$, the identity (\ref{spmdv}) follows by applying Sylvester identity to the determinant (\ref{quschi2}) and using (\ref{ref0f1}).

The factorization $D_{F}=C_FE_F+u_FId$ can be proved as in Lemma \ref{lfe}.
\end{proof}

We have computational evidences which show that (\ref{spmdv}) also holds for $n\in \sigma_F$, $n<v_F$. Actually, in the next Section we will prove it for admissible sets $F$.

The factorization in the previous Lemma will be the key to prove that for admissible sets $F$, the polynomials $c_n^F$, $n\in\sigma _F$, are complete in the associated $L^2$ space.

\section{Exceptional Charlier polynomials}
In the previous Section, we have associated to each finite set $F$ of positive integers the polynomials $c_n^F$, $n\in \sigma_F$,
which are always eigenfunctions of a second order difference operator with rational coefficients.
We are interested in the cases when, in addition, those polynomials are orthogonal and complete with respect to a positive measure.

\begin{definition} The polynomials $c_n^{a;F}$, $n\in \sigma_F$, defined by (\ref{defchex}) are called exceptional Charlier polynomials, if they are orthogonal and complete with respect to a positive measure.
\end{definition}

We next introduce the key concept for finite sets $F$ such that the polynomials $c_n^F$, $n\in \sigma _F$, are exceptional Charlier polynomials.

\begin{definition}\label{defadm} Let $F$ be  a finite set  of positive integers. Split up the set $F$, $F=\bigcup _{i=1}^KY_i$, in such a way that $Y_i\cap Y_j=\emptyset $, $i\not =j$, the elements of each $Y_i$ are consecutive integers and $1+\max (Y_i)<\min Y_{i+1}$, $i=1,\cdots, K-1$. We say that $F$ is admissible if each $Y_i$, $i=1,\cdots, K$,  has an even number of elements.
\end{definition}

Admissible sets $F$ can be characterized in terms of the positivity of the measure $\rho_a^F$ (\ref{mraf}) and the sign of the Casorati polynomial $\Omega _F$ in $\NN$.

\begin{lemma}\label{l3.1} Given a positive real number $a$ and  a finite set $F$ of positive integers, the following conditions are equivalent.
\begin{enumerate}
\item The measure $\rho_a^F$ (\ref{mraf}) is positive.
\item The finite set $F$ is admissible.
\item $\Omega_F^a(n)\Omega_F^a(n+1)>0$ for all nonnegative integer $n$, where the polynomial $\Omega_F^a$ is defined by (\ref{defom}).
\end{enumerate}
\end{lemma}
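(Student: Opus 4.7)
The plan is to establish (1)$\Leftrightarrow$(2) by a direct product-sign analysis, and then to derive the explicit sign of the norm $\langle q_n^F,q_n^F\rangle_{\rho_a^F}$ to obtain (1)$\Leftrightarrow$(3), using Favard's theorem together with a moment-uniqueness argument. First, decompose $F=\bigcup_{i=1}^K Y_i$ into its maximal blocks of consecutive integers. For each integer $x\notin F$ and each block $Y_i$, $\prod_{f\in Y_i}(x-f)$ is a product of $|Y_i|$ consecutive integers of common sign, hence nonnegative for every $x\in\NN$ iff $|Y_i|$ is even. Multiplying over blocks shows $\prod_{f\in F}(x-f)\ge 0$ on $\NN$ iff $F$ is admissible, and since for $a>0$ the weights $a^y/y!$ are strictly positive, this is exactly the positivity condition for $\rho_a^F$, yielding (1)$\Leftrightarrow$(2).

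Next I insert the duality (\ref{duomph}) into the norm formula (\ref{n2q}) of Lemma \ref{lemmc}; whenever $\Omega_F^a(n)\ne 0$, a direct substitution gives
\begin{equation*}
\langle q_n^F,q_n^F\rangle_{\rho_a^F}=\frac{a^{(2k+1)n-k-2u_F}\,e^a\,\bigl(\prod_{f\in F}f!\bigr)^2}{(n+k)!\,\prod_{i=0}^{k-1}(n+i)!\,(n+i+1)!}\,\Omega_F^a(n)\,\Omega_F^a(n+1),
\end{equation*}
whose prefactor is strictly positive for $a>0$. Thus $\langle q_n^F,q_n^F\rangle_{\rho_a^F}$ and $\Omega_F^a(n)\Omega_F^a(n+1)$ always share their sign. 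The implication (1)$\Rightarrow$(3) is then immediate: positivity of $\rho_a^F$ with infinite support $\sigma_F$ combined with Lemma \ref{sze} forces $\Omega_F^a(n)\ne 0$ for every $n$, and the norm is strictly positive, so $\Omega_F^a(n)\Omega_F^a(n+1)>0$.

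For (3)$\Rightarrow$(1), (3) makes the displayed norm positive for every $n$, and a short calculation from Lemma \ref{lemmc} yields $a_{n-1}^Q c_n^Q=a(n+k)\Phi_{n-1}^F\Phi_{n+1}^F/(\Phi_n^F)^2>0$ (all pairs $\Phi_n^F\Phi_{n+1}^F$ share a common sign under (3), hence so do $\Phi_{n-1}^F$ and $\Phi_{n+1}^F$). Favard's theorem then supplies a positive Borel measure $\tilde\mu$ with infinite support orthogonalising $(q_n^F)$ with the same norms, so $\tilde\mu$ and $\rho_a^F$ share every moment. Writing the Jordan decomposition $\rho_a^F=\rho^+-\rho^-$ on $\NN$, the positive measures $\rho^+$ and $\tilde\mu+\rho^-$ on $[0,\infty)$ have identical moments, and their Charlier-type super-factorial decay verifies Carleman's condition, so the Stieltjes moment problem is determinate and $\rho^+=\tilde\mu+\rho^-$ as measures. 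A negative atom $w_{x_0}$ of $\rho_a^F$ would force $\tilde\mu(\{x_0\})=-|w_{x_0}|<0$, contradicting positivity of $\tilde\mu$; hence $\rho_a^F\ge 0$. The main obstacle is precisely this final moment-uniqueness step, needed because Favard supplies only \emph{some} positive representing measure and connecting it back to the (a priori only signed) $\rho_a^F$ requires the determinacy of the Stieltjes moment problem.
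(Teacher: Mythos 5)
Your proof is correct and follows essentially the same route as the paper: the block-parity argument for (1)$\Leftrightarrow$(2), the norm formula (\ref{n2q}) combined with the duality (\ref{duomph}) for (1)$\Rightarrow$(3), and Favard's theorem plus moment determinacy for (3)$\Rightarrow$(1), where the paper phrases the determinacy step via the entire Fourier transform of $\rho_a^F$ and "standard moment techniques" instead of Carleman. One small touch-up: Favard gives no control on the support of $\tilde\mu$, so you should invoke the Hamburger--Carleman criterion on the even moments rather than Stieltjes determinacy on $[0,\infty)$; the super-factorial bound you use gives this verbatim, so the argument goes through unchanged.
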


\begin{proof}
It is clear that the definition of an admissible set $F$ is equivalent to $\prod_{f\in F}(x-f)\ge 0$, for all $x\in \NN$.
The equivalence between (1) and (2) is then an easy consequence of the definition of the measure $\rho _a^F$.

We now prove the equivalence between (1) and (3).

(1) $\Rightarrow$ (3). Since the measure $\rho_a^F$ is positive, the polynomials $(q_n^F)_n$ (\ref{defqnch}) are orthogonal with respect to the measure $\rho_a^F$ and have positive $L^2$-norm. According to (\ref{n2q}) in Lemma \ref{lemmc}, we have
\begin{equation}\label{nssu}
\langle q_n^F,q_n^F\rangle =(-1)^k\frac{n!}{(n+k)!}\langle c_n^a,c_n^a\rangle \Phi_n^F\Phi_{n+1}^F.
\end{equation}
We deduce then that $(-1)^k\Phi_n^F\Phi_{n+1}^F>0$ for all $n$. Using the duality (\ref{duomph}), we conclude that  $\Omega _F(n)\Omega_F(n+1)>0$ for all nonnegative integers $n$.

(3) $\Rightarrow$ (1). Using Lemma \ref{sze}, the duality (\ref{duomph}) and proceeding as before, we conclude that the polynomials $(q_n^F)_n$
are orthogonal with respect to $\rho_a^F$ and have positive $L^2$-norm. This implies that there exists a positive measure $\mu$ with respect to which the polynomials $(q_n^F)_n$ are orthogonal. Taking into account that the Fourier transform of $\rho_a^F$ is an entire function, using moment problem standard techniques (see, for instance, \cite{Akh}), it is not difficult to prove that $\mu$ has to be equal to $\rho _a^F$. Hence the measure $\rho_a^F$ is positive.
\end{proof}

In the two following Theorems we prove that for admisible sets $F$ the polynomials $c_n^F$, $n\in \sigma _F$, are orthogonal and complete with respect to a positive measure.

\begin{theorem}\label{th4.4} Given a real number $a\not =0$ and  a finite set $F$ of positive integers, assume that $\Omega_F^a(n)\not=0$ for all nonnegative integer $n$. Then the  polynomials $c_n^{a;F}$, $n\in \sigma _F$,
 are orthogonal with respect to the (possibly signed) discrete measure
\begin{equation}\label{mochex}
\omega_{a;F}=\sum_{x=0}^\infty \frac{a^x}{x!\Omega_F^a(x)\Omega_F^a(x+1)}\delta_x.
\end{equation}
Moreover, for $a<0$ the measure $\omega_{a;F}$ is never positive, and for $a>0$ the measure $\omega_{a;F}$ is positive if and only if $F$ is admissible.
\end{theorem}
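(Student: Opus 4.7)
My approach is to derive the orthogonality from the general symmetric-operator machinery. Theorem \ref{th3.3} already gives that each $c_{n}^{a;F}$ is an eigenfunction of $D_{F}$ with eigenvalue $n$, and since these eigenvalues are pairwise distinct on $\sigma_{F}$, Lemma \ref{lsyo} will deliver orthogonality as soon as I can show that $D_{F}$ is symmetric with respect to $(\omega_{a;F},\mathcal{A})$, where $\mathcal{A}=\mathrm{span}\{c_{n}^{a;F}:n\in\sigma_{F}\}$. The bilinear form is well posed on $\mathcal{A}$, because the Poisson factor $a^{x}/x!$ in $\omega_{a;F}$ decays super-exponentially while the factor $1/[\Omega_{F}^{a}(x)\Omega_{F}^{a}(x+1)]$ grows only polynomially, so every polynomial is integrable against $\omega_{a;F}$.

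To establish the symmetry, I would apply Lemma \ref{tcsd} with $X=\NN$ and $r=1$. Only one difference equation has to be verified, namely
\[
h_{1}(x-1)\,\omega_{a;F}(x-1)=h_{-1}(x)\,\omega_{a;F}(x),\qquad x\geq 1,
\]
together with the two boundary conditions. Substituting the explicit expressions (\ref{jpm1}) and (\ref{jpm3}) for $h_{-1}$ and $h_{1}$, both sides collapse to $-a^{x}/[(x-1)!\,\Omega_{F}^{a}(x)^{2}]$; the hypothesis $\Omega_{F}^{a}(n)\neq 0$ for every $n\geq 0$ is precisely what makes these manipulations legitimate. Condition (\ref{bc1}) is vacuous because $(1+\NN)\setminus\NN=\emptyset$, and (\ref{bc2}) reduces to $h_{-1}(0)=0$, which is immediate from the factor of $x$ in $h_{-1}(x)=-x\,\Omega_{F}^{a}(x+1)/\Omega_{F}^{a}(x)$. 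Lemma \ref{tcsd} then gives the desired symmetry, and Lemma \ref{lsyo} finishes the orthogonality.

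For the positivity dichotomy I would perform a direct sign analysis of the weights $w_{x}=a^{x}/[x!\,\Omega_{F}^{a}(x)\Omega_{F}^{a}(x+1)]$. When $a>0$ the prefactor $a^{x}/x!$ is strictly positive, so $\omega_{a;F}$ is positive iff $\Omega_{F}^{a}(x)\Omega_{F}^{a}(x+1)>0$ for every $x\in\NN$; by condition (3) of Lemma \ref{l3.1} this is exactly admissibility of $F$. When $a<0$ the factor $a^{x}$ alternates in sign, whereas the polynomial ratio $\Omega_{F}^{a}(x+1)/\Omega_{F}^{a}(x-1)$ tends to $1$ as $x\to\infty$ and therefore keeps a fixed positive sign eventually; hence $w_{x-1}$ and $w_{x}$ have opposite signs for all sufficiently large $x$, and $\omega_{a;F}$ cannot be positive.

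I do not anticipate a substantive obstacle. The only mild subtlety is that $D_{F}$ has genuinely rational coefficients, so one cannot invoke Lemma \ref{tcsd} with $\mathcal{A}=\PP$; choosing $\mathcal{A}=\mathrm{span}\{c_{n}^{a;F}\}$ sidesteps this, because on this subspace $D_{F}$ acts as multiplication by $n$ and automatically sends $\mathcal{A}$ into $\PP$.
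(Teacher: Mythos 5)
Your proposal is correct and follows essentially the same route as the paper: symmetry of $D_F$ with respect to $(\omega_{a;F},\mathcal{A})$ via Lemma \ref{tcsd} (with exactly the verification that both sides of the single difference equation reduce to $-a^x/[(x-1)!\,\Omega_F^a(x)^2]$ and that $h_{-1}(0)=0$), then Lemma \ref{lsyo}, then Lemma \ref{l3.1} for the $a>0$ dichotomy. The only cosmetic difference is the $a<0$ case, where the paper deduces that $\Omega_F^a$ would need a zero in every interval $(2n+1,2n+2)$, while you observe that consecutive weights eventually alternate in sign because $\Omega_F^a(x-1)/\Omega_F^a(x+1)\to 1$; both are the same sign-analysis argument.
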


\begin{proof}
Write $\Aa$ for the linear space generated by the polynomials $c_n^F$, $n\in \sigma _F$.
Using Lemma \ref{tcsd}, the definition of the measure $\omega_{a;F}$ and the expressions for the difference coefficients of the operator $D_F$ (see Theorem \ref{th3.3}), it is straightforward to check that $D_F$  is symmetric with respect to the pair $(\omega_{a;F},\Aa )$.
Since the polynomials $c_n^F$, $n\in \sigma_F$, are eigenfunctions
of $D_F$ with different eigenvalues, Lemma \ref{lsyo} implies that they are orthogonal with respect to $\omega_{a;F}$.

If $a<0$ and the measure $\omega_{a;F}$ is positive, we conclude that $\Omega_F(2n+1)\Omega_F(2n+2)<0$ for all positive integer $n$. But this would imply that $\Omega_F$ has at least a zero in each interval $(2n+1,2n+2)$, which it is impossible since $\Omega_F$ is a polynomial.

If $a>0$, according to Lemma \ref{l3.1}, $F$ is admissible if and only if $\Omega_F(x)\Omega_F(x+1)>0$ for all nonnegative integer $x$.

\end{proof}

\begin{theorem}\label{th4.5} Let $a$ and $F$ be a positive real number and an admissible finite set of positive integers, respectively. Then the linear combinations of the  polynomials $c_n^{a;F}$, $n\in \sigma _F$, are dense in $L^2(\omega_{a;F})$, where $\omega_{a;F}$ is the positive measure (\ref{mochex}). Hence $c_n^{a;F}$, $n\in \sigma _F$, are exceptional Charlier polynomials.
\end{theorem}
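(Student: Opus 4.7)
The plan is to argue by induction on $f_k = \max F$, with base case $F = \emptyset$, where $c_n^{a;\emptyset} = c_n^a$ is the classical Charlier polynomial and $\omega_{a;\emptyset} = \rho_a$; completeness here is classical, following from determinacy of the Stieltjes moment problem for $\rho_a$. For the inductive step, one first checks that $F_\Downarrow$ is again admissible: the chunk decomposition $F = \bigcup_i Y_i$ into blocks of consecutive integers with each $|Y_i|$ even simply loses its first block $\{1,\dots,s_F-1\}$ (of even length by admissibility) upon passage to $F_\Downarrow$, so $F_\Downarrow = \bigcup_{i\ge 2}(Y_i - s_F)$ has the same block structure and gap condition. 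Moreover $\max F_\Downarrow = f_k - s_F < f_k$, so the inductive hypothesis applies and yields completeness of $(c_m^{a;F_\Downarrow})_{m \in \sigma_{F_\Downarrow}}$ in $L^2(\omega_{a;F_\Downarrow})$.

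\textbf{Transfer via the Darboux factorization.} Suppose $f \in L^2(\omega_{a;F})$ is orthogonal to every $c_n^{a;F}$, $n \in \sigma_F$. I would first establish that the first order operators $C_F$ and $E_F$ of Lemma~\ref{thjod} are formal adjoints between the two Hilbert spaces, up to a nonzero constant:
\begin{equation*}
\langle C_F g,\, f\rangle_{\omega_{a;F}} = \lambda\, \langle g,\, E_F f\rangle_{\omega_{a;F_\Downarrow}}, \qquad \lambda \ne 0,
\end{equation*}
for $f,g$ in a suitable dense subspace (e.g., polynomials). This is a direct summation-by-parts using the explicit formulas~\eqref{defoc}--\eqref{defoe}, the expression for the weights $a^x/(x!\Omega_F(x)\Omega_F(x+1))$, and the invariance~\eqref{iza}. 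Combined with the intertwining~\eqref{spmdv}---which the paper extends to all $n \in \sigma_F$ for admissible $F$, as announced just after Lemma~\ref{thjod}---one obtains, for every $n \in \sigma_F \setminus \{u_F\}$,
\begin{equation*}
\langle E_F f,\, c_{n-k-1}^{a;F_\Downarrow}\rangle_{\omega_{a;F_\Downarrow}} = \mathrm{const}\cdot\langle f,\, c_n^{a;F}\rangle_{\omega_{a;F}} = 0.
\end{equation*}
A short calculation using the identity $u_F - u_{F_\Downarrow} = k - s_F + 1$ shows that the shift $n \mapsto n-k-1$ is a bijection from $\sigma_F \setminus \{u_F\}$ onto $\sigma_{F_\Downarrow}$, so the inductive hypothesis forces $E_F f = 0$.

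\textbf{Conclusion and main obstacle.} The equation $E_F f = 0$ reads $\tilde\Omega_{F_\Downarrow}(x+1)f(x) = \tilde\Omega_{F_\Downarrow}(x)f(x+1)$ by~\eqref{defoe}, and since $F_\Downarrow$ is admissible, Lemma~\ref{l3.1} together with~\eqref{iza} gives $\tilde\Omega_{F_\Downarrow}(x) \ne 0$ for all $x \in \NN$; hence $f = C\,\tilde\Omega_{F_\Downarrow}$ for some constant $C$. The one still unused orthogonality condition, $\langle f, c_{u_F}^{a;F}\rangle_{\omega_{a;F}} = 0$, combined with $c_{u_F}^{a;F} = \Omega_{F_\Downarrow} = \pm\tilde\Omega_{F_\Downarrow}$ from~\eqref{rrom} and~\eqref{iza}, then yields $C\,\|\tilde\Omega_{F_\Downarrow}\|_{\omega_{a;F}}^2 = 0$; positivity of $\omega_{a;F}$ together with the fact that $\tilde\Omega_{F_\Downarrow}$ is a nonzero polynomial forces $C = 0$, hence $f = 0$. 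The main obstacle is the adjointness identity for $C_F$ and $E_F$: its verification requires that the rational coefficients of these operators combine precisely with the ratio of the two weights via the invariance~\eqref{iza}, and that the boundary terms at $x=0$ and at $x\to\infty$ vanish, which in turn uses the non-vanishing of $\Omega_F$ and $\Omega_{F_\Downarrow}$ on $\NN$ (admissibility) and the $L^2$ decay of $f$ and $g$.
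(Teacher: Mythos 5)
Your route is genuinely different from the paper's: the paper proves completeness by duality, showing that $\rho_a^F$ is a determinate positive measure so that $(q_n^F/\Vert q_n^F\Vert_2)_n$ is an orthonormal basis of $L^2(\rho_a^F)$, and then transferring this to $L^2(\omega_{a;F})$ through the dual orthogonality and a Parseval identity (Atkinson's Theorem III.2.1), with no induction and no use of the Darboux factorization. Your summation-by-parts adjointness for $C_F$ and $E_F$ does in fact check out (with $\lambda=1/a$, the boundary term at $x=0$ killed by the factor $x$, and the signs in \eqref{iza} cancelling because only products $\Omega(x)\Omega(x+1)$ appear), and your combinatorial claims about $F_\Downarrow$ and the index shift $n\mapsto n-k-1$ are correct. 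Nevertheless the argument as written has two genuine gaps.

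First, you invoke the intertwining \eqref{spmdv} for every $n\in\sigma_F\setminus\{u_F\}$, but at this point of the paper it is only established for $n\ge v_F$ (it comes from the representation \eqref{quschi2}, which is proved only in that range); for $F\neq\{1,\dots,k\}$ there really are indices $n\in\sigma_F$ with $u_F<n<v_F$ (e.g.\ $n=3$ for $F=\{2,3\}$). The paper's remark after Lemma \ref{thjod} is an announcement, not a proof, and the extension is apparently meant to be a consequence of the Section 4 results — so using it here is circular, and without it you only obtain $\langle E_F f, c_m^{a;F_\Downarrow}\rangle=0$ for $m\ge v_F-k-1$, which misses finitely many elements of $\sigma_{F_\Downarrow}$ and does not let the induction close. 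Second, there is a domain problem: to conclude $E_F f=0$ from the inductive hypothesis you need $E_F f\in L^2(\omega_{a;F_\Downarrow})$, and this is not automatic for $f\in L^2(\omega_{a;F})$. Comparing the two weights, the term of $E_F f(x)$ involving $f(x+1)$ contributes to $\Vert E_F f\Vert^2_{\omega_{a;F_\Downarrow}}$ a quantity comparable to $\sum_x (x+1)\,|f(x+1)|^2\,\omega_{a;F}(x+1)$ up to bounded rational factors, and the extra factor $x$ makes this divergent for suitable $f\in L^2(\omega_{a;F})$; orthogonality of a function outside the Hilbert space to a complete system gives nothing, and the weaker fact that $(E_Ff)\,\omega_{a;F_\Downarrow}$ is a finite signed measure does not help because the family $c_m^{a;F_\Downarrow}$ does not span all polynomials. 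Both obstacles are avoided by the paper's duality/Parseval argument.
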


\begin{proof}
Using Lemma \ref{l3.1} and taking into account that $F$ is admissible, it follows that the measure $\rho _{a}^{F}$ (\ref{mraf}) is positive. We remark that this positive measure is also determinate (that is, there is not other measure with the same moments as those of $\rho _{a}^{F}$). As we pointed out above, this can be proved using moment problem standard techniques (taking into account, for instance, that the Fourier transform of $\rho_{a}^F$ is an entire function). Since for determinate measures the polynomials are dense in the associated $L^2$ space, we deduce that the sequence $(q_n^F/\Vert q_n^F\Vert _2)_n$ (where $q_n^F$ is the polynomial defined by (\ref{defqnch})) is an orthonormal basis in $L^2(\rho_{a}^F)$.

For $s\in \sigma _F$, consider the function $h_s(x)=\begin{cases} 1/\rho _{a}^{F}(s),& x=s\\ 0,& x\not =s, \end{cases}$ where by $\rho _{a}^{F}(s)$ we denote the mass  of the discrete measure $\rho_a^F$ at the point $s$.
Since the support of $\rho _{a}^{F}$ is $\sigma_F$, we get that $h_s\in L^2(\rho_{a}^F)$. Its Fourier coefficients with respect to the orthonormal basis $(q_n^F/\Vert q_n^F\Vert _2)_n$ are $q_n^F(s)/\Vert q_n^F\Vert _2$, $n\ge 0$. Hence
\begin{equation}\label{pf1}
\sum _{n=0}^\infty \frac{q_n^F(s)q_n^F(r)}{\Vert q_n^F\Vert _2 ^2}=\langle h_s,h_r\rangle _{\rho_{a}^F}=\frac{1}{\rho_{a}^F(s)}\delta_{s,r}.
\end{equation}
This is the dual orthogonality associated to the orthogonality
$$
\sum_{u\in \sigma _F}q_n^F(u)q_m^F(u)\rho _{a}^{F}(u)=\langle q_n^F,q_n^F\rangle \delta _{n,m}
$$
of the polynomials $q_n^F$, $n\ge 0$, with respect to the positive measure $\rho _{a}^{F}$ (see, for instance, \cite{At}, Appendix III, or \cite{KLS}, Th. 3.8).

Using (\ref{nssu}), (\ref{norCh}) and the duality (\ref{duomph}), we get
\begin{equation}\label{neq1}
\frac{1}{\Vert q_n^F\Vert _2 ^2}=\omega _{a;F}(n)x_n,
\end{equation}
where $x_n$ is the positive number given by
\begin{equation}\label{defxn}
x_n=\frac{a^k}{e^a}\left(\frac{\prod_{i=0}^k (n+i)!}{a^{(k+1)n-u_F}\prod_{f\in F}f!}\right)^2
\end{equation}
Using now the duality (\ref{duaqnrn}), we can rewrite (\ref{pf1}) for $n=m$ as
\begin{equation}\label{mochx}
\langle c_n^{a;F},c_n^{a;F}\rangle_{\omega_{a}^{F}}=\frac{a^{n-u_F-k}e^a\prod_{f\in
F}(n-f-u_F)}{(n-u_F)!}.
\end{equation}

Consider now a function $f$ in $L^2(\omega_{a;F})$ and write $g(n)=(-1)^nf(n)/x_n^{1/2}$, where $x_n$ is the positive number given by (\ref{defxn}). Using (\ref{neq1}), we get
$$
\sum_{n=0}^\infty\frac{\vert g(n)\vert ^2}{\langle q_n^F,q_n^F\rangle _{\rho_{a}^F}}=\sum_{n=0}^\infty \omega_{a;F}(n)\vert f(n)\vert ^2=\Vert f\Vert _2^2<\infty.
$$
Define now
$$
v_r=\sum_{n=0}^\infty\frac{g(n)q_n^F (r)}{\langle q_n^F,q_n^F\rangle _{\rho_{a}^F}}.
$$
Using Theorem III.2.1 of \cite{At}, we get
\begin{equation}\label{pf6}
\Vert f\Vert _2^2=\sum_{n=0}^\infty\frac{\vert g(n)\vert ^2}{\langle q_n^F,q_n^F\rangle _{\rho_{a}^F}}=\sum _{r\in \sigma _F}\vert v_r\vert ^2\rho_{a}^F (r).
\end{equation}
On the other hand, using the duality (\ref{duaqnrn}),  (\ref{neq1}), (\ref{defxn}) and (\ref{mochx}), we have
$$
v_r=\frac{(-1)^r}{(\rho_{a}^F(r))^{1/2}}\sum_{n=0}^\infty f(n)\frac{c_r^{a;F}(n)}{\Vert c_r^{a;F}\Vert _2}\omega_{a;F}(n).
$$
This is saying that $(-1)^r(\rho_{a}^F(r))^{1/2}v_r$, $r\in \sigma _\F$, are the Fourier coefficients of $f$ with respect to the orthonormal system $(c_n^{a;F}/\Vert c_n^{a;F}\Vert_2)_n$. Hence, the identity (\ref{pf6}) is Parseval's identity for the function $f$. From where we deduce that the orthonormal system $(c_n^{a;F}/\Vert c_n^{a;F}\Vert_2)_n$ is complete in $L^2(\omega_{a;F})$.
\end{proof}

\section{Constructing polynomials which are eigenfunctions of second order differential operators}
One can construct exceptional Hermite polynomials by taking limit in the exceptional Charlier polynomials. We use the basic limit
(\ref{blchh}).

Given a finite set of positive integers $F$, using the expression (\ref{defchexa}) for the polynomials $c_n^{a;F}$, $n\in\sigma_F$, setting $x\to \sqrt {2a}x+a$ and taking limit as $a\to +\infty$, we get (up to normalization constants) the polynomials, $n\in \sigma _F$,
\begin{equation}\label{defhex}
H_n^F(x)=\begin{vmatrix}H_{n-u_F}(x)&H_{n-u_F}'(x)&\cdots &H_{n-u_F}^{(k)}(x)\\
H_{f_1}(x)&H_{f_1}'(x)&\cdots &H_{f_1}^{(k)}(x)\\
\vdots&\vdots&\ddots &\vdots\\
H_{f_k}(x)&H_{f_k}'(x)&\cdots &H_{f_k}^{(k)}(x) \end{vmatrix} .
\end{equation}
More precisely
\begin{equation}\label{lim1}
\lim_{a\to +\infty}\left(\frac{2}{a}\right)^{n/2}c_n^{F}(\sqrt{2a}x+a)=\frac{1}{(n-u_F)!\nu_F}H_n^F(x)
\end{equation}
uniformly in compact sets, where
\begin{equation}\label{defnuf}
\nu_F=2^{\binom{k+1}{2}}\prod_{f\in F}f!.
\end{equation}

Notice that $H_n^F$ is a polynomial of degree $n$ with leading coefficient equal to
$$
2^{n+\binom{k+1}{2}}V_F\prod_{f\in F}(f-n+u_F),
$$
where $V_F$ is the Vandermonde determinant defined by (\ref{defvdm}).

Assume now that $F$ is admissible (\ref{defadm}). According to Lemma \ref{l3.1}, this gives for all $a>0$ that $\Omega ^a _F(x)\Omega ^a _F(x+1)>0$ for $x\in \NN $, where $\Omega _F^a$ is the polynomial (\ref{defom}) associated to the Charlier family. In particular $\Omega ^a _F(x)\not =0$, for all nonnegative integer $x$. Hence, if instead of (\ref{defchexa}) we use (\ref{quschi2}), we get the following alternative expression for the polynomials $H_n^F$, $n\ge v_F$, ($i$ denotes the imaginary unit $i=\sqrt{-1}$)
\begin{equation}\label{defhexa}
H_n^F(x)=\gamma_n\begin{vmatrix}H_{n-v_F}(x)&-iH_{n-v_F+1}(x)&\cdots &(-i)^mH_{n-v_F+m}(x)\\
H_{g_1}(-ix)&H_{g_1}'(-ix)&\cdots &H_{g_1}^{(m)}(-ix)\\
\vdots&\vdots&\ddots &\vdots\\
H_{g_m}(-ix)&H_{g_m}'(-ix)&\cdots &H_{g_m}^{(m)}(-ix)\end{vmatrix} ,
\end{equation}
where $\gamma_n$ is the normalization constant
\begin{equation}\label{nc2}
\gamma_n=i^{u_G}2^{\binom{k+1}{2}-\binom{m}{2}}\frac{V_F}{V_G}\prod_{f\in F}(f-n+u_F),
\end{equation}
and as in the previous sections $G$ denotes the involuted set $G=I(F)$ (see (\ref{dinv})).

We introduce the associated polynomials
\begin{align}\label{defhom}
\Omega _F(x)&=\begin{vmatrix}
H_{f_1}(x)&H_{f_1}'(x)&\cdots &H_{f_1}^{(k-1)}(x)\\
\vdots&\vdots&\ddots &\vdots\\
H_{f_k}(x)&H_{f_k}'(x)&\cdots &H_{f_k}^{(k-1)}(x) \end{vmatrix},\\\label{defhomt}
\tilde \Omega _F(x)&=i^{u_G+m}\begin{vmatrix}
H_{g_1}(-ix)&H_{g_1}'(-ix)&\cdots &H_{g_1}^{(m-1)}(-ix)\\
\vdots&\vdots&\ddots &\vdots\\
H_{g_m}(-ix)&H_{g_m}'(-ix)&\cdots &H_{g_m}^{(m-1)}(-ix)\end{vmatrix} .
\end{align}
Since $u_G+m=u_F+k$, we have that both $\Omega_F$ and $\tilde \Omega_F$ are polynomials of degree $u_F+k$.

The invariant property (\ref{iza}) gives
\begin{equation}\label{izah}
\Omega _F(x)=2^{\binom{k}{2}-\binom{m}{2}}\frac{V_F}{V_G}\tilde\Omega _F(x).
\end{equation}
We also straightforwardly have
\begin{equation}\label{rromh}
H_{u_F}^F(x)=\frac{2^{k-s_F+1}\nu_F}{\nu_{F_\Downarrow}}\Omega_{F_\Downarrow }(x),
\end{equation}
where the numbers $\nu_F$ and $s_F$ are defined by (\ref{defnuf}) and (\ref{defs0}), respectively, and the finite set of integers $F_\Downarrow$ is defined by (\ref{deff1}).

Proceeding in a similar way, we can transform the second order difference operator (\ref{sodochex}) in a second order differential operator with respect to which the polynomials $H_n^F$, $n\in\sigma_F$, are eigenfunctions:

\begin{theorem}\label{th5.1} Let $F$ be a finite set of positive integers. Then the polynomials $H_n^F$, $n\in \sigma _F$,
are common eigenfunctions of the second order differential operator
\begin{equation}\label{sodohex}
D_F=-\partial ^2+h_1(x)\partial+h_0(x),
\end{equation}
where $\partial=d/dx$ and
\begin{align}\label{jph1}
h_1(x)&=2\left(x+\frac{\Omega_F'(x)}{\Omega_F(x)}\right),\\\label{jph2}
h_0(x)&=2\left(k+u_F-x\frac{\Omega_F'(x)}{\Omega_F(x)}\right)-\frac{\Omega_F''(x)}{\Omega_F(x)}.
\end{align}
More precisely $D_F(H_n^F)=2nH_n^F(x)$.
\end{theorem}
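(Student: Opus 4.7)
The plan is to derive the theorem by passing to the limit $a \to +\infty$ in the Charlier eigenvalue equation of Theorem \ref{th3.3}, using the basic scaling (\ref{blchh}) and its exceptional version (\ref{lim1}). Set $y = \sqrt{2a}\,x + a$ and $\tilde c_n(x) = (2/a)^{n/2} c_n^{a;F}(y)$, so that by (\ref{lim1}), $\tilde c_n(x) \to H_n^F(x)/((n-u_F)!\,\nu_F)$ uniformly on compact subsets of $\CC$, together with each fixed-order derivative (since we are dealing with polynomial sequences of fixed degree).

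Under this substitution the shift $y \mapsto y\pm 1$ corresponds to $x \mapsto x \pm 1/\sqrt{2a}$ in $\tilde c_n$, so Taylor's formula (exact on polynomials) gives $\tilde c_n(x\pm 1/\sqrt{2a}) = \tilde c_n \pm \tilde c_n'/\sqrt{2a} + \tilde c_n''/(4a) + O(a^{-3/2})$. Dividing the equation $D_F(c_n^{a;F}) = n\,c_n^{a;F}$ (with $D_F$ the Charlier-side operator of Theorem \ref{th3.3}) by $(a/2)^{n/2}$ and multiplying by $2$, I obtain
\begin{equation*}
2(h_{-1}+h_0+h_1)\tilde c_n + \frac{2(h_1-h_{-1})}{\sqrt{2a}}\tilde c_n' + \frac{h_1+h_{-1}}{2a}\tilde c_n'' + O(a^{-1/2}) = 2n\,\tilde c_n,
\end{equation*}
with the $h_j$ as in (\ref{jpm1})--(\ref{jpm3}), evaluated at $y$.

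The key analytic input is that, by (\ref{rrom0}) and (\ref{lim1}), $(2/a)^{d/2}\Omega_F^a(\sqrt{2a}x+a)$ converges, together with all derivatives in $x$, to a nonzero constant multiple of the Hermite Wronskian $\Omega_F(x)$ (where $d = f_k + u_{F_{\{k\}}}$). This yields the expansion $\Omega_F^a(y+1)/\Omega_F^a(y) = 1 + \frac{1}{\sqrt{2a}}\Omega_F'(x)/\Omega_F(x) + \frac{1}{4a}\Omega_F''(x)/\Omega_F(x) + o(1/a)$, and from $\Lambda_F^a = k\Omega_F^a - \frac{d}{da}\Omega_F^a$ in (\ref{relomla}) one derives the companion expansion of $a[\Lambda_F^a(y)/\Omega_F^a(y) - \Lambda_F^a(y+1)/\Omega_F^a(y+1)]$. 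After substituting into $h_{\pm 1}, h_0$ and collecting orders in $1/\sqrt{2a}$, I find
\begin{equation*}
\frac{h_1+h_{-1}}{2a} \to -1, \qquad \frac{2(h_1-h_{-1})}{\sqrt{2a}} \to 2\left(x + \frac{\Omega_F'(x)}{\Omega_F(x)}\right),
\end{equation*}
\begin{equation*}
2(h_{-1}+h_0+h_1) \to 2\left(k+u_F - x\frac{\Omega_F'(x)}{\Omega_F(x)}\right) - \frac{\Omega_F''(x)}{\Omega_F(x)},
\end{equation*}
which match (\ref{jph1})--(\ref{jph2}) exactly; the limit of the eigenvalue equation then reads $D_F(H_n^F) = 2n\,H_n^F$ for each $n\in\sigma_F$.

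The main obstacle will be the accurate asymptotic bookkeeping of $\Omega_F^a$ and especially of $\Lambda_F^a = k\Omega_F^a - \frac{d}{da}\Omega_F^a$ to order $a^{-1}$: large cancellations between $y \sim \sqrt{2a}\,x + a$ and $a$ in the combination $h_{-1} + h_0 + h_1$ must be controlled before the quadratic terms $(\Omega_F'/\Omega_F)^2$ appearing in the naive expansion of $-y\Omega_F^a(y+1)/\Omega_F^a(y) - a\Omega_F^a(y)/\Omega_F^a(y+1)$ cancel against the corresponding piece coming from $a\Lambda_F^a/\Omega_F^a$, leaving only the logarithmic-derivative contributions $\Omega_F'/\Omega_F$ and $\Omega_F''/\Omega_F$ predicted by (\ref{jph1})--(\ref{jph2}).
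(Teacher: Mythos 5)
Your proposal follows essentially the same route as the paper: the paper's own (sketched) proof likewise substitutes $x\to x_a=\sqrt{2a}\,x+a$ in the difference equation $D_F(c_n^{a;F})=nc_n^{a;F}$, regroups it into second-difference, first-difference and zeroth-order parts, and passes to the limit using the scaled limits of $\Omega_F^a$ and of its first and second differences, deferring exactly the ``careful bookkeeping'' you flag at the end. Your Taylor-expansion version of that regrouping and your use of (\ref{relomla}) to control the $\Lambda_F^a$ terms are equivalent in substance, and you correctly locate the decisive cancellation of the $(\Omega_F'/\Omega_F)^2$ contributions, so the plan matches the paper's argument.
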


\begin{proof}
The proof is a matter of calculation using carefully the basic limit (\ref{blchh}), hence we only sketch it.

We assume that $k$ is even (the case for $k$ odd being similar).

Using that $c_n^a(x+k)=\sum_{j=0}^k\binom{k}{j}c_{n-j}^a(x)$, the basic limit
(\ref{blchh}) and the alternative definition (\ref{defoma}) for $\Omega _F^a$, we can get the limits
\begin{align}\label{lim2}
\lim_{a\to \infty}\left(\frac{2}{a}\right)^{(u_F+k)/2}\Omega ^a_F(x_a)&=\frac{2^k\Omega _F(x)}{\nu _F},\\\label{lim3}
\lim_{a\to \infty}\left(\frac{2}{a}\right)^{(u_F+k-1)/2}(\Omega ^a_F(x_a+1)-\Omega ^a_F(x_a))&=\frac{2^{k-1}\Omega _F'(x)}{\nu _F},\\\nonumber
\lim_{a\to \infty}\left(\frac{2}{a}\right)^{(u_F+k-2)/2}(\Omega ^a_F(x_a+1)-2\Omega ^a_F(x_a)+\Omega ^a_F(x_a-1))&=\frac{2^{k-2}\Omega _F''(x)}{\nu_F},
\end{align}
where $\nu_F$ is defined by (\ref{defnuf}) and $x_a=\sqrt{2a}x+a$.

Taking into account that $c_n^{a;F}(x)=\Omega_{F_n}^a(x)$, where $F_n=\{f_1,\cdots, f_k,n-u_F\}$, we can get similar limits for the polynomials
$c_n^{a;F}(x)$, $n\in \sigma _F$.

We next write the spectral equation $D_F^a(c_n^{a;F})=nc_n^{a;F}$ (where we write $D_F^a$ for the second order difference operator (\ref{sodochex})) in the form
\begin{align*}
&h_{-1}^a(x)\left[c_n^F(x+1)-2c_n^F(x)+c_n^F(x-1)\right]+(h_1^a(x)-h^a_{-1}(x))\left[c_n^F(x+1)-c_n^F(x)\right]\\&\quad\quad +(h_0^a(x)+h_1^a(x)+h^a_{-1}(x))c_n^F(x)=nc_n^F(x),
\end{align*}
where $h_{-1}^a, h_0^a$ and $h_1^a$ are given by (\ref{jpm1}), (\ref{jpm2}) and (\ref{jpm3}), respectively.
It is then enough to set $x\to x_a$ and take carefully limit as $a\to \infty$ using (\ref{jpm1}), (\ref{jpm2}), (\ref{jpm3}) and the previous limits.

\end{proof}

We can factorize the second order differential operator $D_F$ as product of two first order differential operators. As a consequence the system $(D_F, (H_n^F)_{n\in \sigma_F})$ can be constructed by applying a sequence of $k$ Darboux transforms to the Hermite system.

\begin{lemma} Let $F=\{f_1,\cdots ,f_k\}$ be a finite set of positive integers and write $F_{\{ k\}}=\{f_1,\cdots ,f_{k-1}\}$. We define the first order differential operators $A_F$ and $B_F$ as
\begin{align}
A_F&=-\frac{\Omega _F(x)}{\Omega_{F_{\{ k\}}}(x)}\partial+\frac{\Omega _F'(x)}{\Omega_{F_{\{ k\}}}(x)},\\
B_F&=\frac{\Omega _{F_{\{ k\}}}(x)}{\Omega_{F}(x)}\partial-\frac{2x\Omega_{F_{k}}+\Omega '_{F_{\{ k\}}}(x)}{\Omega_{F}(x)}.
\end{align}
Then $H_n^F(x)=A_F(H_{n-f_k+k}^{F_{\{ k\}}})(x)$, $n\in \sigma_F$. Moreover
\begin{align*}
D_{F_{\{ k\}}}&=B_FA_F+2(f_k+u_{F_{\{ k\}}})Id,\\
D_{F}&=A_FB_F+2(f_k+u_F)Id.
\end{align*}
\end{lemma}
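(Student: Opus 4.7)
My plan is to transcribe the proof of the Charlier analog (Lemma \ref{lfe}) into the Hermite setting, with Sylvester's identity again producing the key intertwining relation $H_n^F = A_F(H_{n-f_k+k}^{F_{\{k\}}})$. Two preliminary facts are needed, exactly as in the Charlier case: the bookkeeping identities $u_F = u_{F_{\{k\}}} + f_k - k$ and $\sigma_F = (f_k - k) + \sigma_{F_{\{k\}}}$ (immediate from (\ref{defuf})--(\ref{defsf})), together with the Hermite counterpart of (\ref{rrom0}),
$$\Omega_F(x) = (-1)^{k-1}\,H_{f_k + u_{F_{\{k\}}}}^{F_{\{k\}}}(x),$$
obtained by substituting $n - u_{F_{\{k\}}} = f_k$ in (\ref{defhex}) applied to $F_{\{k\}}$ and cycling the top row to the bottom.

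For the intertwining, I would apply Sylvester's identity (Lemma \ref{lemS}) to the $(k+1)\times(k+1)$ Wronskian matrix
$$M = \bigl(\phi_i^{(j-1)}(x)\bigr)_{i,j=1}^{k+1},\qquad (\phi_1,\ldots,\phi_{k+1}) = (H_{f_1},\ldots,H_{f_k},H_{n-u_F}),$$
with corner choice $i_0 = j_0 = k$, $i_1 = j_1 = k+1$. The four ``standard'' minors are readily identified: $\det M = (-1)^k H_n^F$, $\det M_{k,k+1}^{k,k+1} = \Omega_{F_{\{k\}}}$, $\det M_{k+1}^{k+1} = \Omega_F$, and $\det M_k^{k+1} = (-1)^{k-1} H_{n-f_k+k}^{F_{\{k\}}}$. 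The remaining two minors $\det M_k^k$ and $\det M_{k+1}^k$ have columns of derivative orders $0,1,\ldots,k-2,k$ (skipping order $k-1$); by the elementary column-derivative identity---the derivative of a Wronskian $W(\psi_1,\ldots,\psi_n)$ equals the determinant obtained from $W$ by replacing its last column with $(\psi_i^{(n)})_i$, since by the Leibniz rule differentiating any interior column duplicates the next column---they evaluate to $(-1)^{k-1}(H_{n-f_k+k}^{F_{\{k\}}})'$ and $\Omega_F'$ respectively. Substituting into Sylvester's identity and canceling $(-1)^{k-1}$ produces
$$\Omega_{F_{\{k\}}}(x)\,H_n^F(x) = \Omega_F'(x)\,H_{n-f_k+k}^{F_{\{k\}}}(x) - \Omega_F(x)\,\bigl(H_{n-f_k+k}^{F_{\{k\}}}\bigr)'(x),$$
which is exactly $H_n^F = A_F\bigl(H_{n-f_k+k}^{F_{\{k\}}}\bigr)$.

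For the factorization $D_{F_{\{k\}}} = B_F A_F + 2(f_k + u_{F_{\{k\}}})\,Id$, I would use the straightforward differential analog of Lemma \ref{lemdes}: given $D$ of second order and $A = -\alpha\partial + \alpha'$, there exists a unique first order $B$ with $D = BA + \lambda\,Id$ if and only if $D(\alpha) = \lambda\alpha$, in which case $B$ is determined by coefficient matching. Applying this with $D = D_{F_{\{k\}}}$, $\alpha$ proportional to $\Omega_F$, and $\lambda = 2(f_k + u_{F_{\{k\}}})$, the claim reduces to the spectral equation $D_{F_{\{k\}}}(\Omega_F) = 2(f_k + u_{F_{\{k\}}})\,\Omega_F$, which is immediate from the preliminary identification of $\Omega_F$ with $(-1)^{k-1}H_{f_k+u_{F_{\{k\}}}}^{F_{\{k\}}}$ and Theorem \ref{th5.1} applied to $F_{\{k\}}$.

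Finally, for $D_F = A_F B_F + 2(f_k + u_F)\,Id$, I would compute, for every $n \in \sigma_F$,
\begin{align*}
A_F B_F(H_n^F) &= A_F B_F A_F\bigl(H_{n-f_k+k}^{F_{\{k\}}}\bigr) = A_F\bigl[D_{F_{\{k\}}} - 2(f_k + u_{F_{\{k\}}})\,Id\bigr]\bigl(H_{n-f_k+k}^{F_{\{k\}}}\bigr)\\
&= 2(n - f_k + k - f_k - u_{F_{\{k\}}})\,A_F\bigl(H_{n-f_k+k}^{F_{\{k\}}}\bigr) = 2(n - f_k - u_F)\,H_n^F,
\end{align*}
so $\bigl(A_F B_F + 2(f_k + u_F)\,Id\bigr)(H_n^F) = 2n H_n^F = D_F(H_n^F)$ for all $n \in \sigma_F$. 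Since both sides are second order differential operators with rational coefficients agreeing on infinitely many polynomials of pairwise distinct positive degrees, the continuous analog of Lemma \ref{lemigop} (whose proof is the obvious transcription, using that the Wronskian of three polynomials of distinct degrees has positive degree) forces equality of the two operators. The main technical point of the whole argument is identifying the two ``skip-column'' minors with derivatives of Wronskians and tracking the resulting signs; once that column-derivative identity is in hand, the remainder is formal bookkeeping.
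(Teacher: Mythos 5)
Your proposal is correct and follows essentially the route the paper itself indicates: its proof of this lemma is a two-line sketch pointing either to a limit $a\to\infty$ in Lemma \ref{lfe} or to a direct application of Sylvester's identity to the determinant (\ref{defhex}), and you carry out the latter, transcribing the structure of the proof of Lemma \ref{lfe} (Sylvester for the intertwining, reduction of the first factorization to the spectral equation $D_{F_{\{k\}}}(\Omega_F)=2(f_k+u_{F_{\{k\}}})\Omega_F$ via the Hermite analogue of (\ref{rrom0}), and the operator-identification argument \`a la Lemma \ref{lemigop} for the second). The sign bookkeeping, the skip-column Wronskian derivative identity, and the coefficient-matching reduction all check out, so this is a faithful, fully detailed version of the paper's intended argument.
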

\begin{proof}
The Lemma can be proved applying limits in Lemma \ref{lfe}, or by applying Silvester Identity (for rows $(1,k)$ and columns $(k-1,k)$)
in the definition (\ref{defhex}) of the polynomials $H_n^F$, $n\in \sigma _F$.

\end{proof}

When $F$ is admissible, using the alternative expression (\ref{defhexa}) for the polynomials $H_n^F$, $n\in \sigma _F$, we get other factorization for the differential operator
$D_F$.

\begin{lemma}\label{lfh} Let $F$ be an admissible finite set of positive integers and write $F_{\Downarrow}$ for the finite set of positive integers defined by (\ref{deff1}). We define the first order differential operators $C_F$ and $E_F$ as
\begin{align}\label{opch}
C_F&=\frac{\tilde \Omega _F(x)}{\tilde \Omega_{F_{\Downarrow}}(x)}\partial-\frac{\tilde \Omega _F'(x)+2x\tilde\Omega _F(x)}{\tilde \Omega_{F_{\Downarrow}}(x)},\\\label{opeh}
E_F&=-\frac{\tilde \Omega _{F_{\Downarrow}}(x)}{\tilde \Omega_{F}(x)}\partial+\frac{\tilde \Omega '_{F_{\Downarrow}}(x)}{\tilde \Omega_{F}(x)}.
\end{align}
Then $D_{F_{\Downarrow}}=E_FC_F+2(u_{F}-k-1)Id$ and $D_{F}=C_FE_F+2u_FId$. Moreover
\begin{equation}\label{spmdvh}
C_F(H_{n-k-1}^{F_{\Downarrow}})=\frac{-2^{m+\binom{k-s_F+2}{2}-\binom{k+1}{2}-1}\prod_{j=1}^{m-1}(g_m-g_j)}{\prod_{j=1}^{s_F-1}(j-1)!(j-n+u_F)\prod_{f\in F;f>s_F}(f-j)}H_n^F(x),\quad n\ge v_F,
\end{equation}
where $G=I(F)=\{g_1,\ldots , g_m\}$ and $s_F$ is defined by (\ref{defs0}).
\end{lemma}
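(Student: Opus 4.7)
The plan is to mirror the proof of Lemma~\ref{thjod} for the Charlier family; equivalently, one may take the limit $a\to+\infty$ in Lemma~\ref{thjod} using the basic limit \eqref{blchh} together with \eqref{lim1}, \eqref{lim2}, \eqref{lim3} and the analogous scaled limits of $\tilde\Omega^a_F$ and of the shift operators under $x\to\sqrt{2a}\,x+a$. I outline the direct route: first prove the factorization $D_{F_\Downarrow} = E_F C_F + 2(u_F-k-1)\,Id$, then the intertwining identity \eqref{spmdvh}, and finally $D_F = C_F E_F + 2u_F\,Id$ by a uniqueness argument.

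For the first factorization, a differential-operator analog of Lemma~\ref{lemdes} reduces the equality to matching the scalar coefficient of $E_F C_F$ with that of $D_{F_\Downarrow} - 2(u_F-k-1)\,Id$ (the $\partial^2$-coefficients agree trivially; the $\partial$-coefficients agree after a short computation using \eqref{izah}). The remaining scalar identity, after applying the invariance \eqref{izah} and \eqref{ref0f1} together with the appropriate change of variable $x\to -ix$, is equivalent to the assertion that $\Omega_{G_{\{m\}}}$ is an eigenfunction of $D_{G_{\{m\}}}$ with eigenvalue $2(g_m+u_{G_{\{m\}}})$. This in turn follows from the Hermite analog of \eqref{rrom0}, namely $\Omega_F=(-1)^{k-1}H^{F_{\{k\}}}_{f_k+u_{F_{\{k\}}}}$ (obtained by the same row-reordering argument as in the Charlier case), combined with Theorem~\ref{th5.1} applied to the smaller family $G_{\{m\}}$.

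For \eqref{spmdvh} with $n\ge v_F$, I would apply Sylvester's identity (Lemma~\ref{lemS}) to the $(m+1)\times(m+1)$ determinant \eqref{defhexa} defining $H^F_n$, taking rows $i_0=1,\ i_1=m+1$ and columns $j_0=m,\ j_1=m+1$. After column manipulations exploiting $H_n'(x)=2nH_{n-1}(x)$, the four resulting minors identify with $\tilde\Omega_F$, a constant multiple of $\tilde\Omega_F'$, $H^{F_\Downarrow}_{n-k-1}$, and $(H^{F_\Downarrow}_{n-k-1})'$, where the identification of the smaller Wronskian as $H^{F_\Downarrow}_{n-k-1}$ uses \eqref{ref0f1} and a defining formula analogous to \eqref{defhexa} for the family $F_\Downarrow$. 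Combining with the definition \eqref{opch} of $C_F$ then yields \eqref{spmdvh} up to an overall scalar. The main obstacle will be pinning down this scalar exactly: the constant on the right-hand side of \eqref{spmdvh}, with its Vandermonde ratios, factorials $(j-1)!$, powers of $2$, signs, and the product $\prod_{f\in F,\,f>s_F}(f-j)$, requires careful tracking of $\gamma_n$ from \eqref{nc2}, of the analogous normalization constant for $H^{F_\Downarrow}_{n-k-1}$, and of the signs introduced by reordering rows and columns; I would cross-check the final answer by equating leading coefficients of both sides in $x$ using the leading-coefficient formula for $H^F_n$ recorded after \eqref{defnuf}.

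Finally, $D_F=C_FE_F+2u_F\,Id$ follows from the first factorization and \eqref{spmdvh} by a uniqueness argument. Writing \eqref{spmdvh} as $H^F_n = \lambda_n\,C_F(H^{F_\Downarrow}_{n-k-1})$ with $\lambda_n\neq 0$ for $n\ge v_F$, one computes
\begin{align*}
C_F E_F(H^F_n) &= \lambda_n\,C_F\bigl[(E_F C_F)(H^{F_\Downarrow}_{n-k-1})\bigr] \\
&= \lambda_n\,C_F\bigl[(D_{F_\Downarrow}-2(u_F-k-1)\,Id)(H^{F_\Downarrow}_{n-k-1})\bigr] \\
&= 2(n-u_F)\,\lambda_n\,C_F(H^{F_\Downarrow}_{n-k-1}) \;=\; 2(n-u_F)\,H^F_n,
\end{align*}
using the eigenvalue relation $D_{F_\Downarrow}(H^{F_\Downarrow}_{n-k-1}) = 2(n-k-1)\,H^{F_\Downarrow}_{n-k-1}$ from Theorem~\ref{th5.1}. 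Hence $(C_FE_F+2u_F\,Id)(H^F_n) = 2n\,H^F_n = D_F(H^F_n)$ for every $n\ge v_F$. Since the polynomials $H^F_n$, $n\ge v_F$, have pairwise distinct positive degrees, the differential-operator analog of Lemma~\ref{lemigop} forces $C_FE_F+2u_F\,Id = D_F$.
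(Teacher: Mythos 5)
Your proposal is correct and follows essentially the route the paper intends: the lemma is stated there without an explicit proof, being the Hermite counterpart of Lemma \ref{thjod}, and your three steps (the first factorization via the coefficient identity, reduced through (\ref{izah}) and (\ref{ref0f1}) to the eigenfunction property supplied by the Hermite analog of (\ref{rrom0}) together with Theorem \ref{th5.1}; Sylvester's identity applied to (\ref{defhexa}) to get (\ref{spmdvh}), with the constant fixed by leading coefficients; and the uniqueness argument in the style of Lemmas \ref{lfe} and \ref{lemigop} for $D_F=C_FE_F+2u_F\,Id$) are exactly the adaptation of the Charlier proof, with the limit $a\to+\infty$ being an equally valid alternative. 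Only note the harmless slip that the reduced scalar identity asserts that $\Omega_G$ (not $\Omega_{G_{\{m\}}}$) is an eigenfunction of $D_{G_{\{m\}}}$ with eigenvalue $2(g_m+u_{G_{\{m\}}})$, which is precisely what your cited analog of (\ref{rrom0}) yields.
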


\section{Exceptional Hermite polynomials}
In the previous Section, we have associated to each finite set $F$ of positive integers the polynomials $H_n^F$, $n\in \sigma_F$,
which are always eigenfunctions of a second order differential operator with rational coefficients.
We are interested in the cases when, in addition, those polynomials are orthogonal and complete with respect to a positive measure.

\begin{definition} The polynomials $H_n^F$, $n\in \sigma_F$, defined by (\ref{defhex}) are called exceptional Hermite polynomials, if they are orthogonal and complete with respect to a positive measure.
\end{definition}

As it was mentioned in the Introduction, simultaneously with this paper, exceptional Hermite polynomials as Wronskian determinant of Hermite polynomials have been introduced and studied (using a different approach) in \cite{GUGM}. In that paper, exceptional Hermite polynomials are defined for a given non-decreasing finite sequence of non-negative integers
$\lambda=(\lambda _1,\cdots , \lambda _l)$, and are denoted by $H^{(\lambda )} _j(x)$; the degree of $H^{(\lambda )} _j(x)$ is $2\sum_{j=1}^l\lambda_j -2l+j$. The relationship between the exceptional Hermite polynomials introduced in \cite{GUGM} and the ones in this paper is the following: given a non-decreasing finite sequence of positive integers
$\lambda=(\lambda _1,\cdots , \lambda _l)$, we form a finite set of positive integers $F$ as follows: $F=\{f_1,f_2,\cdots ,f_{2l-1},f_{2l}\}$, where
$f_{2j-1}=\lambda_{j}+2j-2$ and $f_{2j}=\lambda_{j}+2j-1$, $j=1,\cdots , l$; then $H^{(\lambda )} _j(x)=H^F_{2\sum_{j=1}^l\lambda_j -2l+j}(x)$.

The following Lemma and Theorem show that again the admissibility of $F$ will be the key to construct exceptional Hermite polynomials.

\begin{lemma} Let $F$  be a finite set of positive integers. Then $F$ is admissible if and only if
the Wronskian determinant $\Omega_F$ (\ref{defhom}) does not vanish in $\RR$.
\end{lemma}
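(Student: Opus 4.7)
The plan is to derive both implications of the lemma from Lemma~\ref{l3.1} (the discrete analogue for the Charlier Casorati $\Omega_F^a$) by passing to the limit $a\to +\infty$ under the Hermite scaling $x\mapsto \sqrt{2a}\,x+a$. The workhorse is the uniform convergence on compact subsets of $\RR$ stated in (\ref{lim2}),
\begin{equation*}
\left(\frac{2}{a}\right)^{(u_F+k)/2}\Omega_F^a(\sqrt{2a}\,x+a)\longrightarrow \frac{2^k}{\nu_F}\,\Omega_F(x),
\end{equation*}
supplemented by the first-difference refinement (\ref{lim3}).

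For the direction ``$F$ admissible $\Rightarrow$ $\Omega_F$ has no real zero'' I would argue by contradiction. The easy case is an odd-multiplicity real zero $x_0$ of $\Omega_F$: choose $\epsilon>0$ small enough that $\Omega_F(x_0-\epsilon)\Omega_F(x_0+\epsilon)<0$, then for each large $a$ pick integers $n_a,m_a\in\NN$ with $(n_a-a)/\sqrt{2a}\to x_0-\epsilon$ and $(m_a-a)/\sqrt{2a}\to x_0+\epsilon$; the uniform limit forces $\Omega_F^a(n_a)$ and $\Omega_F^a(m_a)$ to have opposite signs for all large $a$, contradicting Lemma~\ref{l3.1}, which says that for admissible $F$ and $a>0$ the values $\Omega_F^a(n)$ all share the same sign on $\NN$. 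To exclude an even-multiplicity real zero I would proceed inductively on $|F|$ using the Darboux factorization of Lemma~\ref{lfh}: the operator $F\mapsto F_\Downarrow$ preserves admissibility (as noted after Lemma~\ref{thjod}) and strictly reduces $u_F+k$, so by induction $\Omega_{F_\Downarrow}$ has no real zero; then the identity (\ref{spmdvh}), which writes $H_n^F$ as $C_F$ applied to $H_{n-k-1}^{F_\Downarrow}$, together with the factorization $D_F=C_FE_F+2u_F\,Id$, forces a local contradiction at a putative double real zero of $\Omega_F$ by comparing orders of vanishing of $H_n^F$ and $H_{n-k-1}^{F_\Downarrow}$ (the former is a polynomial, while $C_F$ has $\Omega_F$ in a denominator only canceled against numerator factors of the expected order).

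For the direction ``$F$ not admissible $\Rightarrow$ $\Omega_F$ has a real zero'' Lemma~\ref{l3.1} provides, for each $a>0$, some $n_0=n_0(a)\in\NN$ with $\Omega_F^a(n_0)\Omega_F^a(n_0+1)<0$, so the polynomial $\Omega_F^a$ has a real root $r_a\in(n_0,n_0+1)$. Writing $t_a=(r_a-a)/\sqrt{2a}$, I would use classical Plancherel--Rotach-type asymptotics for Charlier polynomials (whose zeros concentrate in an interval of the form $(a-C\sqrt{a},a+C\sqrt{a})$, a containment that transfers to the zeros of the Casorati determinant $\Omega_F^a$) to show that $t_a$ remains in a bounded subset of $\RR$ as $a\to +\infty$; a convergent subsequence $t_{a_j}\to x_0$ combined with the uniform limit (\ref{lim2}) then yields $\Omega_F(x_0)=0$.

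The main technical obstacle is the even-multiplicity case in the forward direction, since the sign-flip argument rules out only odd-multiplicity real zeros. My preferred route is the inductive Darboux reduction above, with base case $F=\emptyset$ (so $\Omega_\emptyset\equiv 1$) and inductive step via Lemma~\ref{lfh} and (\ref{spmdvh}); a more hands-on alternative would be to combine the scaled limit at the three points $x_0-\epsilon,x_0,x_0+\epsilon$ with the first-difference refinement (\ref{lim3}) to extract contradictory asymptotic sign information for $\Omega_F^a$ at three clusters of consecutive integers, again against the constant-sign conclusion of Lemma~\ref{l3.1}.
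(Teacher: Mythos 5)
Your forward direction has a genuine gap exactly at its crux: real zeros of $\Omega_F$ of even multiplicity. The sign-flip argument from Lemma \ref{l3.1} via the scaled limit (\ref{lim2}) only detects zeros where $\Omega_F$ changes sign, and your proposed rescue via Lemma \ref{lfh} does not work as described. The operator $C_F$ in (\ref{opch}) has $\tilde\Omega_{F_{\Downarrow}}$ in its denominator and $\tilde\Omega_F$ in its numerator, not the other way around; since by your induction hypothesis $\Omega_{F_{\Downarrow}}$ has no real zeros, applying $C_F$ to the polynomial $H_{n-k-1}^{F_{\Downarrow}}$ never produces a pole, so there is nothing "to be canceled against numerator factors" and no divisibility constraint to violate. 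All one gets from (\ref{spmdvh}) at a putative double zero $x_0$ (where $\tilde\Omega_F(x_0)=\tilde\Omega_F'(x_0)=0$) is that $H_n^F(x_0)=0$ for $n\ge v_F$, and using $E_F$ (\ref{opeh}), whose denominator is $\tilde\Omega_F$, the required divisibility holds automatically from the operator identities, yielding at best higher-order vanishing of $H_n^F$ at $x_0$ --- none of which is a contradiction. Your "hands-on" alternative with three points and (\ref{lim3}) fails for the same reason: constant sign of $\Omega_F^a$ on $\NN$ is perfectly compatible with the limit polynomial $\Omega_F$ touching zero with even order. This is precisely where the paper does something different: it keeps your induction on $F_{\Downarrow}$ (via $\max F$, with $\Omega_{F_{\Downarrow}}\ne 0$ and $H_{u_F}^F\propto\Omega_{F_{\Downarrow}}$, see (\ref{rromh})), but derives the contradiction globally rather than locally. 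It builds the scaled discrete measures $\tau_a$ with masses $a^x (c_{u_F}^{a;F}(x))^2/(x!\,\Omega_F^a(x)\Omega_F^a(x+1))$, shows via (\ref{lm1})--(\ref{lm3}) and a Riemann-sum argument that $\tau_a(I)$ converges to a constant times $\int_I e^{-x^2}\Omega_{F_{\Downarrow}}^2(x)/\Omega_F^2(x)\,dx$ on a compact interval $I$ to the right of the largest real zero $x_0$ of $\Omega_F$, and notes that the norm identity (\ref{mochx}) with $n=u_F$ makes $\tau_a(\RR)$ independent of $a$; since $\Omega_F^2$ sits in the denominator, the limit integral blows up as $I$ approaches $x_0$ regardless of the multiplicity of the zero, giving the contradiction. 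You would need to replace your local order-of-vanishing step by this (or an equivalent oscillation/Krein--Adler) argument.

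Two smaller points. For the converse direction the paper simply appeals to the Krein and Adler results (the direct limit proof is given only for the admissible$\Rightarrow$nonvanishing implication), whereas you attempt a limit proof; your localization of the real zero $r_a$ of $\Omega_F^a$ via "Plancherel--Rotach asymptotics transferred to the Casoratian" is not justified as stated, but it can be repaired without any asymptotics: the scaled polynomials $a^{-(u_F+k)/2}\Omega_F^a(\sqrt{2a}\,x+a)$ have the same degree $u_F+k$ as the limit $\Omega_F$ and, by coefficient-wise convergence, nonvanishing limiting leading coefficient, so all their zeros converge (in $\CC$) to zeros of $\Omega_F$; a real zero of $\Omega_F^a$, guaranteed by Lemma \ref{l3.1} when $F$ is not admissible, then forces a real zero of $\Omega_F$. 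Also, your induction should start from an admissible base case such as $F=\{1,2\}$ (where $\Omega_F(x)=8x^2+4$), and you must check that $F\mapsto F_{\Downarrow}$ preserves admissibility and decreases $\max F$, which is what the paper's induction actually uses.
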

\begin{proof}
Consider a second order differential operator $T$ of the form $T=-d^2/dx^2+U$, and write $\phi_n$, $n\ge 0$, for a sequence of eigenfunctions for $T$.
For a finite set of positive integers $F=\{f_1,\cdots, f_k\}$, consider the Wronskian determinant $\Omega_F^T(x)=|\phi_{f_l}^{(j-1)}(x)|_{l,j=1}^k$.
For operators defined in a half-line, Krein proved \cite{Kr} that $F$ is admissible if and only if $\Omega _F^T$ does not vanish in the real line. A similar result was proved by Adler \cite{Ad} for operators defined in a bounded interval. Adler's result can easily  be extended to the whole real line (in fact, he considered in \cite{Ad} the case of Wronskian determinant of Hermite polynomials). The Lemma is then an easy consequence of this result for  the functions $H_n(x)e^{-x^2}$.

Anyway, for the sake of completeness, we prove by passing to limit from Lemma \ref{l3.1} the implication $\Rightarrow$ in the Lemma (which it is what we need in the following Theorem).

For $a>0$, consider the positive measure $\tau _a$ defined by
$$
\tau _a=\frac{a^k}{e^a}\sum _{x=0}^\infty\frac{a^x(c_{u_F}^{a;F}(x))^2}{x!\Omega _F^a(x)\Omega_F^a(x+1)}\delta {y_{a,x}},
$$
where
\begin{equation}\label{defya}
y_{a,x}=(x-a)/\sqrt{2a}.
\end{equation}
We also need the following limits
\begin{align}\label{lm1}
\lim _{a\to +\infty}\frac{\Omega _F^a(\sqrt {2a}x+a)}{a^{(k+u_F)/2}}&=\frac{2^{(k-u_F)/2}\Omega _F(x)}{\nu_F},\\\label{lm11}
\lim _{a\to +\infty}\frac{\Omega _F^a(\sqrt {2a}x+a+1)}{a^{(k+u_F)/2}}&=\frac{2^{(k-u_F)/2}\Omega _F(x)}{\nu_F},\\\label{lm2}
\lim _{a\to +\infty}\frac{c_{u_F}^{a;F}(\sqrt {2a}x+a)}{a^{u_F/2}}&=\frac{2^{k-u_F/2-s_F+1}\Omega _{F_{\Downarrow}}(x)}{\nu_{F_{\Downarrow}}},\\\label{lm3}
\lim _{a\to +\infty}\frac{\sqrt{2a}a^{\sqrt {2a}x+a}}{e^a\Gamma (\sqrt {2a}x+a+1)}&=e^{-x^2}/\sqrt \pi,
\end{align}
uniformly in compact sets. The  first limit is (\ref{lim2}). The second one is a consequence of
(\ref{lim3}). The third one is a consequence of (\ref{lim1}) and (\ref{rromh}). The forth one is consequence of Stirling's formula.

We proceed by complete induction on $s=\max F$.
Since $F$ is admissible, the first case to be considered is $s=2$ which it corresponds with $F=\{1,2\}$. Then
$\Omega _F (x)=8x^2+4$ which it clearly satisfies $\Omega _F(x)\not =0$, $x\in \RR $.

Asume that  $\Omega _F(x)\not =0$, $x\in \RR $, if $\max F\le s$  and take an admissible set $F$ with $\max F=s+1$. The definition of $F_{\Downarrow }$ (\ref{deff1}) then says that $\max F_{\Downarrow }\le s$. The induction hypothesis then implies that $\Omega _{F_{\Downarrow }}(x)\not =0$, $x\in \RR $.
We now proceed by \textsl{reductio ad absurdum}. Hence, we assume that the polynomial $\Omega _F$ vanishes in $\RR$. Write $x_0=\max \{x\in \RR: \Omega _F(x)=0\}$. Take real numbers $u,v$ with $x_0<u<v$ and write $I=[u,v]$. Since $\Omega_F(x)\not =0$, $x\in I$, applying Hurwitz's Theorem to the limits (\ref{lm1}) and (\ref{lm11}) we can choice a contable set $X=\{a_n: n\in \NN \}$ of positive numbers with $\lim_n a_n=+\infty$ such that $\Omega _F^a(\sqrt {2a}x+a)\Omega _F^a(\sqrt {2a}x+a+1)\not =0$, $x\in I$ and $a\in X$.

Hence, we can combine the limits (\ref{lm1}), (\ref{lm11}), (\ref{lm2}) and (\ref{lm3}) to get
\begin{equation}\label{lm4}
\lim _{a\to +\infty;a\in X}h_a(x)=d_3h(x),\quad \mbox{uniformly in $I$},
\end{equation}
where
\begin{align*}
h_a(x)&=\frac{a^k\sqrt{2a}a^{\sqrt {2a}x+a}(c_{u_F}^{a;F}(\sqrt {2a}x+a))^2}{e^a\Gamma (\sqrt {2a}x+a+1)\Omega_F^a (\sqrt {2a}x+a)\Omega_F^a (\sqrt {2a}x+a+1)},\\
h(x)&=\frac{e^{-x^2}\Omega ^2_{F_{\Downarrow }}(x)}{\Omega ^2_F(x)},
\end{align*}
and $d_3=2^{k-2s_F+2}\nu_F^2/(\sqrt \pi \nu_{F_{\Downarrow }}^2)$.
We now prove that
\begin{equation}\label{lm5}
\lim _{a\to +\infty ;a\in X}\tau _a(I)=d_3\int_{I}h(x)dx.
\end{equation}
To do that, write $I_a=\{ x\in \NN: a+u\sqrt{2a}\le x\le a+v\sqrt{2a}\}$.
The numbers $y_{a,x}$, $x\in I_a$, form a partition of the interval $I$ with $y_{a,x+1}-y_{a,x}=1/\sqrt{2a}$ (see (\ref{defya})). Since the function $h$ is continuous  in the interval $I$, we get that
$$
\int_{I}h(x)dx=\lim_{a\to +\infty; a\in X}S_a,
$$
where $S_a$ is the Cauchy sum
$$
S_a=\sum_{x\in I_a}h(y_{a,x})(y_{a,x+1}-y_{a,x}).
$$
On the other hand, since $x\in I_a$ if and only if $u\le y_{a,x}\le v$ (\ref{defya}), we get
\begin{align*}
\tau _a(I)&=\frac{a^k}{e^a}\sum _{x\in I_a}\frac{a^x(c_{u_F}^F(x))^2}{x!\Omega _F^a(x)\Omega_F^a(x+1)}=\frac{1}{\sqrt {2a}}\sum _{x\in I_a}h_a(y_{a,x})\\
&=\sum _{x\in I_a}h_a(y_{a,x})(y_{a,x+1}-y_{a,x}).
\end{align*}
The limit (\ref{lm5}) now follows from the uniform limit (\ref{lm4}).

The identity (\ref{mochx}) for $n=u_F$ says that $\tau _a(\RR)=d_F$, where the positive constant $d_F=\prod_{f\in F}f$ does not depend on $a$.
This gives $\tau _a(I)\le d_F$. And so from the limit (\ref{lm5}) we get
$$
\int_{I}h(x)dx \le \frac{d_F}{d_3}.
$$
That is
$$
\int _u^v\frac{e^{-x^2}\Omega ^2_{F_{\Downarrow }}(x)}{\Omega ^2_F(x)}dx\le \frac{d_F}{d_3}.
$$
On the other hand, since $\Omega_F(x_0)=0$ and $\Omega _{F_{\Downarrow }}(x)\not =0$, $x\in \RR$, we get
$$
\lim _{u\to x_0^+}\int _u^v\frac{e^{-x^2}\Omega ^2_{F_{\Downarrow }}(x)}{\Omega ^2_F(x)}dx=\infty.
$$
Which it is a contradiction.

\end{proof}

\begin{corollary} Given an admissible finite set $F$ of positive integers, we have
for $n\in \sigma _F$,
\begin{equation}\label{mohx}
\langle H_n^{F},H_n^{F}\rangle_{\omega _{F}}=\sqrt \pi 2^{n-u_F+k}(n-u_F)!\prod_{f\in
F}(n-f-u_F).
\end{equation}
\end{corollary}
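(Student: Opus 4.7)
The plan is to obtain (\ref{mohx}) by passing to the limit $a\to +\infty$ in the Charlier $L^2$-norm identity (\ref{mochx}), in exactly the same Riemann-sum spirit as in the proof of the preceding lemma. Throughout the argument one uses that $F$ is admissible, so that by that lemma $\Omega_F$ has no real zeros and the target measure $\omega_F$ is a bona fide positive weight on $\RR$.

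First I divide both sides of (\ref{mochx}) by $a^{n-u_F-k}e^a$. The right-hand side becomes the $a$-independent constant $\prod_{f\in F}(n-f-u_F)/(n-u_F)!$. The rescaled left-hand side is rewritten as a Riemann-type sum over the partition $y_{a,x}=(x-a)/\sqrt{2a}$ (see (\ref{defya})), whose spacing is $y_{a,x+1}-y_{a,x}=1/\sqrt{2a}$:
\begin{equation*}
\frac{1}{a^{n-u_F-k}e^a}\sum_{x=0}^\infty\frac{a^x\,(c_n^{a;F}(x))^2}{x!\,\Omega_F^a(x)\,\Omega_F^a(x+1)}
=\sum_{x=0}^\infty h_{a,n}(y_{a,x})\,(y_{a,x+1}-y_{a,x}),
\end{equation*}
where $h_{a,n}(y)$ is obtained by multiplying the summand by $\sqrt{2a}/(a^{n-u_F-k}e^a)$ and substituting $x=\sqrt{2a}\,y+a$.

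Next I would identify the pointwise limit of $h_{a,n}$ by combining the four limits already used in the previous lemma: the scaling limit (\ref{lim1}) for $c_n^{a;F}(\sqrt{2a}\,y+a)$, the limits (\ref{lm1}) and (\ref{lm11}) for $\Omega_F^a(\sqrt{2a}\,y+a)$ and $\Omega_F^a(\sqrt{2a}\,y+a+1)$, and the Stirling-type limit (\ref{lm3}) for $\sqrt{2a}\,a^x/(e^a\,x!)$. A routine bookkeeping of the powers of $a$, $2$, $\nu_F$ and $(n-u_F)!$ (all of which cancel once the pre-factor $a^{u_F+k-n}/e^a$ is absorbed) yields
\begin{equation*}
\lim_{a\to +\infty} h_{a,n}(y)
=\frac{2^{u_F-n-k}\,e^{-y^2}\,(H_n^F(y))^2}{\sqrt{\pi}\,((n-u_F)!)^2\,\Omega_F^2(y)}
\end{equation*}
uniformly on every compact subset of $\RR$. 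Arguing exactly as in (\ref{lm5}), the partial Riemann sums over any bounded interval $I\subset\RR$ therefore converge to the integral of this limit over $I$.

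The main obstacle is upgrading this compact convergence to convergence of the full sum and the full integral, since both range over unbounded sets. For this I would exploit that, with $a>0$ and $F$ admissible, every term of the Charlier sum is nonnegative and the total sum is bounded uniformly in $a$ by the $a$-independent constant $\prod_{f\in F}(n-f-u_F)/(n-u_F)!$. This uniform tail bound, combined with the compact convergence already obtained, allows the interchange of $\lim_{a\to\infty}$ with the $x$-summation; the argument is the one already carried out in the proof of the previous lemma for $n=u_F$, and it adapts verbatim to every $n\in\sigma_F$. Equating the two limits gives
\begin{equation*}
\frac{2^{u_F-n-k}}{\sqrt{\pi}\,((n-u_F)!)^2}\,
\int_\RR\frac{e^{-y^2}(H_n^F(y))^2}{\Omega_F^2(y)}\,dy
=\frac{\prod_{f\in F}(n-f-u_F)}{(n-u_F)!},
\end{equation*}
and recognising the left-hand integral as $\langle H_n^F,H_n^F\rangle_{\omega_F}$ yields exactly (\ref{mohx}).
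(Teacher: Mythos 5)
Your overall route is the one the paper intends (the paper's own proof is omitted with a pointer to the limiting argument of the preceding lemma together with (\ref{mochx})), and your bookkeeping of the limit density is correct: combining (\ref{lim1}), (\ref{lm1}), (\ref{lm11}) and (\ref{lm3}) does give
$\lim_a h_{a,n}(y)=2^{u_F-n-k}e^{-y^2}(H_n^F(y))^2/\bigl(\sqrt{\pi}\,((n-u_F)!)^2\,\Omega_F^2(y)\bigr)$,
and equating full sum with full integral would indeed yield (\ref{mohx}).

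However, the step where you pass from convergence on compact intervals to convergence of the \emph{entire} sum to the \emph{entire} integral has a genuine gap. Nonnegativity of the terms plus the fact that the total (rescaled) sum equals the $a$-independent constant $\prod_{f\in F}(n-f-u_F)/(n-u_F)!$ does \emph{not} give a uniform tail bound: a fixed total mass can still escape to infinity in the variable $y_{a,x}$ as $a\to\infty$. What this information does give, via the compact convergence and a Fatou-type comparison, is only the inequality $\langle H_n^F,H_n^F\rangle_{\omega_F}\le \sqrt{\pi}\,2^{n-u_F+k}(n-u_F)!\prod_{f\in F}(n-f-u_F)$. Moreover, your appeal to the previous lemma is not available here: that proof never interchanged the limit with the full summation --- it only compared the Riemann sum over a fixed compact interval $I$ with the total mass to derive a one-sided bound $\tau_a(I)\le d_F$, which sufficed for the contradiction there but does not address tails. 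To close the gap you must show that for every $\varepsilon>0$ there are a compact interval $I$ and $a_0$ such that, for $a\ge a_0$, the contribution to the rescaled sum from integers $x$ with $y_{a,x}\notin I$ is less than $\varepsilon$; this requires a quantitative estimate, e.g.\ sub-Gaussian tail bounds for the rescaled Poisson weights $a^x/(e^a x!)$ combined with a uniform-in-$a$ lower bound on $\Omega_F^a(x)\Omega_F^a(x+1)$ and polynomial upper bounds on $(c_n^{a;F}(x))^2$ in the tail region, none of which is supplied in the proposal.
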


\begin{proof}
The proof is similar to that of the previous Theorem (using (\ref{mochx})) and it is omitted.
\end{proof}

\begin{theorem}\label{th6.3} Let $F$ be an admissible finite set of positive integers. Then the polynomials $H_n^F$, $n\in \sigma _F$,
 are orthogonal with respect to the positive weight
\begin{equation}\label{mohex}
\omega_F(x)=\frac{e^{-x^2}}{\Omega ^2_F(x)},\quad x\in \RR,
\end{equation}
and their linear combinations  are dense in  $L^2(\omega_{F})$. Hence $H_n^F$, $n\in \sigma _F$, are
exceptional Hermite polynomials.
\end{theorem}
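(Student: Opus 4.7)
The plan is to prove orthogonality and density separately, mirroring the scheme of Theorems \ref{th4.4} and \ref{th4.5} for the Charlier case.

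For orthogonality I would apply Lemma \ref{tcsd2} to the second order differential operator $D_F$ of Theorem \ref{th5.1}. The preceding Lemma guarantees that $\Omega_F$ does not vanish on $\RR$, so $\omega_F=e^{-x^2}/\Omega_F^2$ is a genuine positive weight with Gaussian decay. Reading off $a_2(x)=-1$ and $a_1(x)=2(x+\Omega_F'/\Omega_F)$ from (\ref{jph1}), the Pearson condition $(a_2\omega_F)'=a_1\omega_F$ reduces to $\omega_F'/\omega_F=-2x-2\Omega_F'/\Omega_F$, which is exactly the logarithmic derivative of $\omega_F$; the boundary requirements at $\pm\infty$ are automatic from the exponential decay of $\omega_F$ and its derivative against any polynomial. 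Hence $D_F$ is symmetric on the span $\mathcal{A}$ of $\{H_n^F:n\in\sigma_F\}$. Since $D_F(H_n^F)=2nH_n^F$ with pairwise distinct eigenvalues, Lemma \ref{lsyo} delivers the orthogonality.

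For density my first approach is to pass to the limit from the Charlier completeness. The basic limit (\ref{lim1}) says that, under the scaling $x\mapsto\sqrt{2a}x+a$ and up to an explicit normalization, $c_n^{a;F}$ tends to $H_n^F$ uniformly on compact sets; Stirling's formula combined with the uniform limits displayed in the proof of the preceding Lemma shows that, pulled back along the same scaling, the discrete measures $\omega_{a;F}$ converge to $\omega_F\,dx$ in the Riemann-sum sense. Given $f\in L^2(\omega_F)$ orthogonal to every $H_n^F$, I would approximate $f$ by compactly supported continuous functions, sample them on the rescaled lattice to produce $f_a\in L^2(\omega_{a;F})$, and exploit the matching of the norm formulas (\ref{mochx}) and (\ref{mohx}) to transfer Parseval's identity for $\{c_n^{a;F}/\Vert c_n^{a;F}\Vert_2\}$ (Theorem \ref{th4.5}) to the limit, forcing $\Vert f\Vert^2_{\omega_F}=0$.

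The main obstacle is the technical bookkeeping in this passage to the limit: one needs uniform control on the partial Parseval sums as $a\to+\infty$ together with enough equicontinuity to interchange the limit with the infinite sum. A cleaner algebraic alternative is induction on $|F|$ via the Darboux factorization $D_F=C_FE_F+2u_F\,Id$ from Lemma \ref{lfh}. By (\ref{spmdvh}) the first-order operator $C_F$ intertwines $H^{F_\Downarrow}_{n-k-1}$ with a nonzero multiple of $H_n^F$ for $n\ge v_F$; the set $F_\Downarrow$ remains admissible by direct inspection of Definition \ref{defadm}; the base case $F=\emptyset$ is the classical density of the Hermite polynomials in $L^2(e^{-x^2}dx)$; and the inductive step then reduces to a finite-dimensional verification for the exceptional polynomials whose degree lies in the gap $[u_F,v_F)\cap\sigma_F$. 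Either route gives the conclusion that $\{H_n^F/\Vert H_n^F\Vert_2:n\in\sigma_F\}$ is a complete orthonormal system in $L^2(\omega_F)$.
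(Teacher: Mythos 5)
Your orthogonality argument is exactly the paper's: verify the Pearson equation $(a_2\omega_F)'=a_1\omega_F$ for $a_2=-1$, $a_1=h_1$ from (\ref{jph1}) (which works precisely because the preceding Lemma guarantees $\Omega_F\neq 0$ on $\RR$, so $\omega_F$ is a smooth positive weight with Gaussian decay), apply Lemma \ref{tcsd2} to get symmetry of $D_F$ on the span of the $H_n^F$, and conclude by Lemma \ref{lsyo} since the eigenvalues $2n$ are distinct. That half is correct and identical in route to the paper.

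The density half is where the gaps are. The paper does not prove it at all: it cites Proposition 5.8 of \cite{GUGM} and omits the argument. Your route (a) (transferring Theorem \ref{th4.5} through the limit $a\to+\infty$) does not work as sketched: Riemann-sum convergence of the rescaled measures $\omega_{a;F}$ to $\omega_F\,dx$ and uniform convergence of $c_n^{a;F}$ to $H_n^F$ on compacta give convergence of each fixed inner product, but completeness is a statement about the \emph{whole} expansion, and the uniform-in-$a$ control of the Parseval tails that you would need is precisely the content of the claim, not ``bookkeeping''; completeness of a family of discrete systems is not in general inherited by a weak limit, so this route is missing its central estimate. Your route (b) is closer in spirit to the argument the paper delegates to \cite{GUGM} (Darboux/Crum transformations), but as written it also has unproved steps: the intertwining (\ref{spmdvh}) is only established for $n\ge v_F$, so to use it you must dualize $C_F$ against an arbitrary $f\in L^2(\omega_F)$ (defining $C_F^\dagger f$ weakly, justifying the integration by parts and the vanishing of boundary terms for a mere $L^2$ function), invoke the inductive hypothesis to conclude $C_F^\dagger f$ is a \emph{finite} combination of the leftover $H_m^{F_\Downarrow}$, then solve the resulting first-order ODE and rule out nonzero solutions using $L^2(\omega_F)$-membership together with orthogonality to the finitely many $H_n^F$ with $n\in[u_F,v_F)\cap\sigma_F$; none of this is carried out, and it is exactly where the work lies. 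Two smaller points: the induction should run on $\max F$ (or $u_F+k$), not on $|F|$, since $|F_\Downarrow|=|F|$ whenever $s_F=1$; and the claim ``either route gives the conclusion'' overstates what the sketches establish. The norm evaluations (\ref{mochx}) and (\ref{mohx}) that you invoke are consistent with such a program, but by themselves they do not close it.
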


\begin{proof}
Write $\Aa _F$ for the linear space generated by the polynomials $H_n^F$, $n\in \sigma _F$.
Using Lemma \ref{tcsd2}, it is easy to check that the second order differential operator $D_F$ (\ref{sodohex}) is symmetric with respect to the pair $(\omega _F, \Aa _F)$ (\ref{mohex}). Since the polynomials $H_n^F$, $n\in \sigma _F$, are eigenfunctions of $D_F$ with different eigenvalues Lemma
\ref{lsyo} implies that they are orthogonal with respect to $\omega _F$.

The completeness of $H_n^F$, $n\in \sigma _F$, in  $L^2(\omega_{F})$ can be proved in a similar way to that of Proposition 5.8 in \cite{GUGM} and it is omitted.

\end{proof}

\bigskip
\noindent
\textit{Acknowledgement} The author would like to thank to two anonymous referees for their comments and suggestions.

\noindent
\textit{Mathematics Subject Classification: 42C05, 33C45, 33E30}

\noindent
\textit{Key words and phrases}: Orthogonal polynomials. Exceptional orthogonal polynomial. Difference operators. Differential operators.
Charlier polynomials. Hermite polynomials.


\begin{thebibliography}{GPT4}

\bibitem{Ad} V.E. Adler,
A modification of Crum's method,
Theor. Math. Phys. \textbf{101} (1994), 1381--1386.

\bibitem{Akh} N. I. Akhiezer,
The classical moment problem,
Oliver \& Boyd, Edinburgh, 1965.

\bibitem{At} F. V. Atkinson,
Discrete and continuous boundary problems,
Academic Press, NY, 1964.



\bibitem{B} S. Bochner,
\"Uber Sturm--Liouvillesche polynomsysteme,
Math. Z. \textbf{29} (1929), 730--736.


\bibitem{CPRS}
J.F. Cariñena, A. M. Perelomov, M. F. Rañada and M. Santander,
A quantum exactly solvable nonlinear oscillator
related to the isotonic oscillator,
J. Phys. A, \textbf{41} (2008), 085301.

\bibitem{Ch} T. Chihara,
An introduction to orthogonal polynomials,
Gordon and Breach Science Publishers, 1978.

\bibitem{Chr} E.B. Christoffel,
\"Uber die Gaussische Quadratur und eine Verallgemeinerung derselben,
J. Reine Angew. Math. \textbf{55} (1858), 61--82.

\bibitem{CD} G. Curbera, A.J. Durán,
Invariant properties for Casorati determinants of classical discrete orthogonal polynomials under
an involution of sets of positive integers,
In preparation.

\bibitem{DEK} S. Y. Dubov, V. M. Eleonskii, and N. E. Kulagin,
Equidistant spectra of anharmonic oscillators,
Sov. Phys. JETP \textbf{75} (1992), 47--53; Chaos 4 (1994) 47--53.


\bibitem{du0} A.J. Durán,
Orthogonal polynomials satisfying  higher order difference equations, Constr.
Approx. \textbf{36} (2012), 459-486.

\bibitem{du1} A.J. Durán,
Using $\D$-operators to construct orthogonal polynomials satisfying  higher order difference or differential equations,
J. Appr. Th. \textbf{174} (2013), 10--53.

\bibitem{du2} A.J. Durán,
Symmetries for Casorati determinants of classical discrete orthogonal polynomials,
Proc. Amer. Math. Soc. \textbf{142} (2014), 915--930.

\bibitem{du3} A.J. Durán,
Wronskian type determinants of orthogonal polynomials, Selberg type formulas and constant
term identities, To appear in J. Combin. Theory Ser. A.

\bibitem{DdI} A.J. Durán and M.D. de la Iglesia,
Constructing bispectral orthogonal polynomials  from the classical discrete families of Charlier, Meixner and Krawtchouk,
Submitted. arXiv:1307.1326.

\bibitem{DR} D. Dutta and P. Roy,
Conditionally exactly solvable potentials and exceptional orthogonal polynomials,
J. Math. Phys. \textbf{51} (2010), 042101.


\bibitem{Gant}
F. R. Gantmacher,
\textit{The theory of matrices},
Chelsea Publishing Company, New York,  1960.


\bibitem{GUKM1}
D. Gómez-Ullate, N. Kamran and R. Milson,
An extended class of orthogonal polynomials defined by a Sturm-Liouville problem,
J. Math. Anal. Appl., \textbf{359} (2009), 352--367.

\bibitem{GUKM2}
D. Gómez-Ullate, N. Kamran and R. Milson,
An extension of Bochner's problem: exceptional invariant subspaces,
J. Approx. Theory, \textbf{162} (2010), 987--1006.

\bibitem{GUKM3}
D. Gómez-Ullate, N. Kamran and R. Milson,
Exceptional orthogonal polynomials and the Darboux transformation,
J. Phys. A, \textbf{43} (2010), 434016.

\bibitem{GUKM4}
D. Gómez-Ullate, N. Kamran and R. Milson,
On orthogonal polynomials spanning a non-standard flag,
Contemp. Math., \textbf{563} (2012), 51--71.

\bibitem{GUKM5}
D. Gómez-Ullate, N. Kamran and R. Milson,
A Conjecture on Exceptional Orthogonal Polynomials,
Found. Comput. Math., \textbf{13} (2013), 615--666.

\bibitem{GUGM}
D. Gómez-Ullate, Y. Grandati and R. Milson,
Rational extensions of the quantum Harmonic oscillator and exceptional Hermite polynomials,
J. Phys. A, \textbf{47} (2014), 015203.


\bibitem{KMc} S. Karlin, J.L. McGregor,
Coincidence properties of birth and death processes,
Pacific J. Math. \textbf{9} (1959), 1109--1140.

\bibitem{KS} S. Karlin, G. Szeg\H{o},
On certain determinants whose elements are orthogonal polynomials,
Journal d'Analyse Math. \textbf{8} (1961), 1--157.

\bibitem{Kr} M.G. Krein,
A continual analogue of a Christoffel formula from the theory of orthogonal polynomials,
Dokl. Akad. Nauk. SSSR  \textbf{113} (1957), 970--973.


\bibitem{KLS} R. Koekoek, P. A. Lesky and L.F. Swarttouw,
Hypergeometric orthogonal polynomials and their $q$-analogues,
Springer Verlag, Berlin, 2008.

\bibitem{Leo} D. Leonard,
Orthogonal polynomials, duality, and association schemes,
SIAM J. Math. Anal. \textbf{13} (1982), 656--663.


\bibitem{MR}
B. Midya and B. Roy,
Exceptional orthogonal polynomials and exactly solvable potentials in
position dependent mass Schr\"odinger Hamiltonians,
Phys. Lett. A, \textbf{373} (2009), 4117--4122.

\bibitem{NSU} A.F. Nikiforov, S.K. Suslov and V.B. Uvarov,
Classical orthogonal polynomials of a discrete variable,
Springer Verlag, Berlin, 1991.

\bibitem{OS0}
S. Odake and R. Sasaki,
Infinitely many shape invariant potentials and new orthogonal polynomials,
Phys. Lett. B, \textbf{679} (2009), 414--417.


\bibitem{OS}
S. Odake and R. Sasaki,
Infinitely many shape invariant discrete quantum mechanical systems and new exceptional orthogonal polynomials related to the Wilson and Askey-Wilson
polynomials,
Phys. Lett. B, \textbf{682} (2009), 130--136.


\bibitem{OS2}
S. Odake and R. Sasaki,
The Exceptional $(X_\ell)$ $(q)$-Racah Polynomials,
Prog. Theor. Phys. \textbf{125} (2011), 851--870.

\bibitem{OS4}
S. Odake and R. Sasaki,
Dual Christoffel transformations,
Prog. Theor. Phys. \textbf{126} (2011), 1--34.


\bibitem{OS5}
S. Odake and R. Sasaki,
Multi-indexed $q$-Racah Polynomials,
J. Phys. A \textbf{45} (2012) 385201 (21pp).

\bibitem{Qu}
C. Quesne,
Exceptional orthogonal polynomials, exactly solvable potentials and supersymmetry,
J. Phys. A, \textbf{41} (2008), 392001--392007.


\bibitem{STZ}
R. Sasaki, S. Tsujimoto and A. Zhedanov,
Exceptional Laguerre and Jacobi polynomials and the corresponding potentials through Darboux-Crum transformations,
J. Phys. A: Math. and Gen., \textbf{43} (2010), 315204.

\bibitem{Sz} G. Szeg\"o,
Orthogonal Polynomials,
American Mathematical Society, Providence, RI, 1959.

\bibitem{Ta}
T. Tanaka,
N-fold Supersymmetry and quasi-solvability associated with X2-Laguerre polynomials,
J. Math. Phys. \textbf{51} (2010), 032101.


\bibitem{YZ}
O. Yermolayeva and A. Zhedanov,
Spectral transformations and generalized Pollaczek polynomials,
Methods and Appl. Anal. \textbf{6} (1999), 261--280.



     \end{thebibliography}
     \end{document}